\newcommand{\be}{\begin}
\newcommand{\e}{\end}
\newcommand{\beq}{\begin{equation}}
\newcommand{\eeq}{\end{equation}}
\newtheorem{lm}{Lemma}[section]
\newtheorem{thm}[lm]{Theorem}
\newtheorem*{thm*}{Theorem}
\newtheorem{prop}[lm]{Proposition}
\newtheorem{cor}[lm]{Corollary}
\theoremstyle{definition}
\newtheorem{defn}[lm]{Definition}
\newtheorem{rmk}[lm]{Remark}
\numberwithin{equation}{section}
\newcommand{\comment}[1]{}
\newcommand{\setof}[2]{\left\{ #1\; : \;#2 \right\}}
\newcommand{\Z}{{\mathbb Z}}
\newcommand{\R}{{\mathbb R}}
\newcommand{\C}{{\mathbb C}}
\newcommand{\T}{{\mathbb T}}
\newcommand{\al}{{\alpha}}
\newcommand{\de}{{\delta}}
\newcommand{\eps}{{\varepsilon}}
\newcommand{\lam}{{\lambda}}
\newcommand{\oline}{\overline}
\renewcommand{\l}{\left}
\renewcommand{\r}{\right}
\newcommand{\Hm}[1]{\leavevmode{\marginpar{\tiny%
$\hbox to 0mm{\hspace*{-0.5mm}$\leftarrow$\hss}%
\vcenter{\vrule depth 0.1mm height 0.1mm width \the\marginparwidth}%
\hbox to 0mm{\hss$\rightarrow$\hspace*{-0.5mm}}$\\\relax\raggedright
#1}}}
\newcommand{\ceil}[1]{\ensuremath{\left\lceil #1 \right\rceil}}
\renewcommand{\d}{\mathrm{d}}
\newcommand{\ti}{\textnormal{i}}
\begin{document}

\title[Universal Eigenvalue Statistics for Dynamically Defined Matrices]{Universal Eigenvalue Statistics for\\ Dynamically Defined Matrices}

\author[A.~Adhikari]{Arka Adhikari}
\address{Arka Adhikari, Department of Mathematics, Stanford University, 94305 Stanford, USA}
\email{arkaa@stanford.edu}
\author[M.~Lemm]{Marius Lemm}
\address{Marius Lemm, Department of Mathematics, University of T\"ubingen, 72076 T\"ubingen, Germany}
\email{marius.lemm@uni-tuebingen.de}
\date{January 3, 2022}

\begin{abstract}
We consider dynamically defined Hermitian matrices generated from orbits of the doubling map. 
We prove that their spectra fall into the GUE universality class from random matrix theory.  
\end{abstract}

\maketitle

\section{Introduction}
Eugene Wigner formulated the vision that the spectra of complex quantum systems are well described by the eigenvalues of random matrices. His work inspired the Wigner-Dyson-Mehta universality conjecture that the eigenvalues of matrices with independent entires are \textit{universal} in the sense that they only depend on the symmetry type of the matrix---Hermitian (GUE) or real symmetric (GOE). The Wigner-Dyson-Mehta universality conjecture was a driving force of random matrix theory and was finally resolved around 10 years ago in a series of groundbreaking works \cite{ Erdos2009b, Yau10,ERetal,ESY2,Tao2011a}. A major focus of random matrix theory in recent years has been to broaden the reach to this vision to various models of ``not-too-random'' matrices whose entries have various \textit{dependencies}. Among the main well-established avenues of research are the study of probabilistically generated correlations \cite{AC,Erdos2015,AEKS,Banna13,Boutet96,Khorunzhy94} and graph-theoretically induced constraints on random adjacency matrices \cite{BHKY,Bau15,HLa}. The motivation to move towards increasingly structured ensembles is the widely held belief that the \textit{universality phenomenon} encompasses many other strongly correlated systems than traditional random matrices. This belief is based on many real-world examples of strongly correlated point processes which empirically reproduce random matrix statistics. Famous examples include the BGS quantum chaos conjecture and Montgomery's pair correlation conjecture for the zeros of the Riemann zeta function. It is major open problem to explain the breadth of the universality of random matrix statistics.

In our ongoing research program \cite{ALY,AL}, we turned to dynamical systems theory as a method for producing matrices with dependencies. We consider \textit{dynamically defined} matrices which are generated by evaluating a complex-valued function $f$ along orbits of an ergodic transformation $T$ and then using the sequence $f(x),f(Tx),f(T^2x),\ldots$ to fill a rectangular array, e.g., as in \eqref{eq:matrix} below. As is common in dynamical systems theory, all the randomness then comes from sampling the starting position $x$. 

We briefly recall that mathematical physicists have long studied the spectral theory of dynamically defined matrices of Schr\"odinger type. These matrices are typically tridiagonal, thus sparse.
In the Schr\"odinger (or more generally, Jacobi) world, many important works have tied together dynamical systems theory and spectral theory in deep and sometimes surprising ways, with a particular role played by the Almost Mathieu operator; see \cite{AJ,BGS01,BS,DKS,JL} for examples of breakthrough results. One finding has been that sufficiently random-like Schr\"odinger operators display Anderson localization and also Poisson spectral statistics \cite{Minami}. Hence, roughly speaking, it can be said that the sparse dynamically defined Schr\"odinger operators tend to belong the opposite of the random matrix regime.

We take a new perspective and consider dynamically defined Hermitian matrices that are \textit{full} (i.e., all entries are of comparable size). This puts us in the world of random matrix theory, but with a novel kind of correlation structure. Our goal is then to establish that these dynamically defined matrices display random matrix statistics down to the smallest scale, i.e., that dynamical correlations can mimic random ones.

The \textit{main result} of this work can be summarized as follows.\\ 

\textit{The GUE universality class contains dynamically defined matrices.}\\ 

We emphasize that a single entry $f(T^k x)$ of our  dynamically generated matrix (defined in \eqref{eq:matrix} below) fully determines all other entries, so in some sense there is complete deterministic dependence within the matrix and this fact places the model outside of the existing techniques. From a dynamical systems perspective, it is nonetheless clear that, say, a strongly mixing dynamical system will lead to rapid decorrelation of the entries. The latter perspective is fruitful for us, but we have to make precise how the dynamical decorrelation of matrix entries manifests on the spectral level and this requires some new techniques.

Naturally, the precise manner of dynamical decorrelation depends on the underlying dynamical system. In the prior works \cite{ALY,AL}, we have focused on extremely rigid dynamical systems of skew-shift type which are heavily correlated; the resulting spectra are difficult to analyze down to small spectral scales and universality remains open. Even in the Schr\"odinger world, the skew-shift still presents a significant frontier \cite{BS,HLS1,K1}.

The present paper instead starts from an especially natural strongly mixing dynamical system, the doubling map $T:[0,1] \to [0,1]$ defined by
\begin{equation}\label{eq:doublingdefn}
    T(x) = 2x \mod 1.
\end{equation}
and derive the desired strong conclusion---universal eigenvalue statistics. 

Existing techniques on correlated matrices come in two flavors: either they use a special model-dependent correlation structure (e.g., in the random graph setting) \cite{BHKY,Bau15,HLa} or they require assumptions on the correlation decay \cite{AEKS,Che2016,EKS} that are not fulfilled in the dynamical setting for the reason mentioned above that a single matrix entry fully determines all other ones. We overcome these limitations by using a resampling trick and by adapting techniques developed for correlated matrices with finite range of dependence \cite{Che2016} to a logarithmic range of dependence.

From the random matrix theory perspective, our result broadens the scope of the GUE universality class to encompass dynamically defined matrices. From a mathematical physics perspective, it provides a spectral-theoretic confirmation of the quasi-random nature of the doubling map; this can be seen as a ``delocalization analog'' of a well-known result by Bourgain-Schlag \cite{BS} establishing Anderson localization for potentials dynamically defined via strongly mixing potentials; see also \cite{CS}. We leave it as an open problem to extend the result presented here to a wider class of dynamical systems satisfying a quantitative mixing assumption, such as appropriate subshifts of finite type described by the potential formalism of Bowen \cite{Bowen}. Our result  opens the door to developing a much more general theory which connects dynamical systems theory to the spectra of dynamically defined full matrices. It would be interesting to see which features of such a theory mirror developments in the world of Schr\"odinger operator, especially given that they typically model the physically different localization regime.

The paper is organized as follows.
\begin{itemize}

\item In Section \ref{sec:model}, we define our model of dynamically defined matrices and list our main results on a local law for the Green's function, the universality of its eigenvalue statistics, and the delocalization of eigenvectors.
\item In Section \ref{sec:Step1}, we introduce our resampling method, list our analogous universality results for our resampled matrices, and show that the results of section \ref{sec:model} can be derived from the results of this section.
\item In Section \ref{sec:self-consist}, we review the importance of deriving a self-consistent equation for the Stieltjes' transform of our resampled matrices. We show that our model has a scalar self-consistent equation, and we derive this scalar self-consistent equation from an associated matrix self-consistent equation.
\item In Section \ref{sec:matrissc}, we derive our matrix self-consistent equation via appropriate concentration estimates.
\item In Section \ref{sec:analysis}, we prove the stability of our matrix self-consistent equation and derive associated error bounds for the Stieltjes' transform of our resampled matrix model, as well as the Green's function of our resampled matrix model.
\item In Section \ref{sec:locallaw}, we use our stability bounds and error estimates to prove a local law via an inductive scheme.
\item In Section \ref{sec:OU}. we compare our resampled matrix model to a model with a small Gaussian part via an Ornstein-Uhlenbeck process. This allows us to prove our main universality result.

\end{itemize}
\section{Model and main result} \label{sec:model}

\subsection{Dynamically defined matrices}
Let $\T=\R/\Z$ denote the standard torus. Given an \textit{evaluation (or sampling) function} $f:\T\to\C$ and a fixed starting point $x\in [0,1]$, we consider the \textit{dynamically defined matrix} 
$$
X_{ij}=\frac{1}{\sqrt{N}} f(T^{(2N-1)i+j} x),
$$
that is,
    \begin{equation} \label{eq:matrix} X=\frac{1}{\sqrt{N}}\begin{pmatrix}
  f( T x) & f( T^2 x) & \ldots & f(T^{N}x) \\
f(T^{2N+1} x) & f(T^{2N+2}x) & \ldots &f(T^{3N}x) \\
 f(T^{4N+1}x) & f(T^{4N+2}x) & \ldots & f(T^{5N}x)\\
\vdots & \vdots & \ddots & \vdots  \\
f(T^{2N^2-N+1}x) & f(T^{2N^2-N+2}x) & \ldots  & f(T^{2N^2}x)
\end{pmatrix} \end{equation}

Observe that all the entries of $X$ belong to a single dynamical orbit---the orbit of the starting point $x\in [0,1]$ under the doubling map $T$.  The normalization factor $\frac{1}{\sqrt{N}}$ is standard and ensures that the limiting spectral distribution is supported on an order-$1$ set.
For convenience, we start at the top left with $Tx$ instead of $x$ and we skip $N$ discrete time steps when moving from one row to the next. 

Our results concern the (real-valued) spectrum of the following Hermitization $H_X$ of $X$.
\begin{equation}\label{eq:Hdefn}
    H_X= \begin{pmatrix} 
    0 & X \\
     X^{\dagger} & 0\end{pmatrix}
\end{equation}
with $X^\dagger$ the adjoint of $X$. 

To turn this into a random matrix ensemble, we sample the starting position $x\in [0,1]$ from the uniform measure on $[0,1]$, which we call $\mathbb P$. We write $\mathbb E$ for the associated expectation. The uniform measure is a natural choice because it is the equilibrium measure of the doubling map.

We now state an informal version of the main result. See Theorem \ref{thm:main} below for the formal version.

\be{thm*}[Main result, informal version]
Suppose that $f$ is an admissible evaluation function in the sense of Definition \ref{defn:admissible}. Then, for every $k\geq 1$, the $k$-point correlation functions of the eigenvalues of $H_X$ converge to the $k$-point correlation functions of the eigenvalues of an $N\times N$ GUE matrix as $N\to\infty$.
\e{thm*}

The function $f(x)=\exp(2\pi \ti x)$ is an example of an admissible evaluation function in the sense of Definition \ref{defn:admissible}. 

 The result can be rephrased to describe the singular values of $X$ if desired. Generalization to non-square matrices $X$ is straightforward.

\subsection{Admissible evaluation functions}
The following assumption on the evaluation function $f$ arises naturally in the proof.

We require $f\in C^2(\mathbb T)$ and define its Fourier coefficients by 
\begin{equation}
    c_k =\int_0^1 f(y) e^{-2\pi \ti k y}\d y,\qquad k\in\Z.
\end{equation}
Then we associate to $f$ the function $g_f:\T\to[0,\infty)$ given by

\begin{equation}\label{eq:gfdefn}
    g_f(x)= \sum_{\substack{n\geq 0:\\ 2 \nmid n}} \left| \sum_{k=0}^{\infty} c_{n 2^k } \exp[2\pi \textnormal{i} k x]  \right|^2 
\end{equation}
where $2 \nmid n$ means that $2$ is not a divisor of $n$. To see that $g_f(x)$ is finite, note that $g_f(x)\leq \|c\|_{\ell^1}^2\leq C\|f\|_{H^1}^2$ with $c=(c_k)_{k\in\Z}$, where the last estimate uses the Cauchy-Schwarz inequality.

\begin{defn}[Admissible evaluation function] \label{defn:admissible}
We say that $f\in C^2(\T)$ is an \textit{admissible evaluation function} if $\mathbb E[f]=0$ and there exists a constant $g_{\min}>0$ such that 
\beq\label{eq:admissible}
\inf_{x\in[0,1]}g_f(x)\geq g_{\min} >0.
\eeq
\end{defn}


\be{ex}
The functions $f(x)=e^{2\pi \ti x}$ and $f(x)=\cos (2\pi x)$ are admissible evaluation functions.
\e{ex}


\begin{rmk}


A simple sufficient condition for $f\in C^2(\T)$ to be an admissible evaluation function can be obtained by ignoring cancellations and requiring some concentration of Fourier modes along dyadic scales. More precisely, if there exists an integer $n\geq1 $ such that $2 \nmid n$ and
\begin{equation}
    |c_n|  > \sum_{k=1}^{\infty} |c_{n2^k}|
\end{equation}
then $f$ is an admissible evaluation function.
\end{rmk}

To clarify the meaning of the function $g_f$, we express it through the following function $\phi:\Z\to\C$ which measures the correlation between $f$ and $\bar f$ among different dyadic scales:
\begin{align}
\label{eq:phidefn}
    \phi(j) = \mathbb{E}\l[f(x) \overline{f(T^j x)}\r]= \sum_{k=1}^{\infty} \oline{c_k} c_{k 2^j}.
\end{align}
We can use $\phi$ to define the infinite Toeplitz matrix $\Phi$ by
\beq\label{eq:Phidefn}
\Phi_{i,j}=\phi(i-j),\qquad i,j\in\Z.
\eeq
Observe that $\phi(j)=\overline{\phi(-j)}$ so $\Phi$ is Hermitian. We will see below that $\Phi$ determines the limiting spectra distribution of {{ $H_X$}} as $N\to\infty$

By expanding the square in \eqref{eq:gfdefn}, we can now rewrite $g_f$ as a Fourier series associated to the correlation function $\phi$, i.e.,
\beq\label{eq:gfrewrite}
g_f(x)
=\sum_{j\in\Z}^{\infty} \phi(j) e^{2\pi \ti j x}
= \phi(0) +2\mathrm{Re} \sum_{j=1}^{\infty} \phi(j) e^{2\pi \ti j x}.
\eeq
One says that formula \eqref{eq:gfrewrite} represents the \textit{Fourier symbol} of the infinite Toeplitz matrix $\Phi$. In particular, $\inf_x g_f(x)=\inf\mathrm{spec}\,\Phi$. We see that Assumption \eqref{eq:admissible} is equivalent to the spectral condition that $\Phi$ is \textit{strictly} positive definite. This is how the function $g_f$ arises in the proof.



\subsection{First result: local law}

We begin by stating a \textit{local law} which shows that, as $N\to\infty$, the empirical spectral distribution of $H$ converges to a deterministic limiting density even at small scales. The local law is a key prerequisite to our main result, universality.


The local law is conveniently formulated through the Stieltjes transform. Given a measure $\d\mu$ on the real axis, its Stieltjes transform $m_\mu$ is defined by
$$
m_\mu(z)=\int_{\R} \frac{1}{x-z}  \d \mu(x),\qquad \mathrm{Im} z>0.
$$ 

We prove a formulation of the local law in which the Stieljtes transform of $H_X$ converges to $m_\infty(z)$ which solves the following self-consistent equation
\beq\label{eq:scnaive}
\int_0^1 \frac{1}{g_f(x) m_\infty(z)+z}\d x=-m_\infty(z).
\eeq

\be{prop}\label{prop:minfty}
There exists a unique solution $m_\infty(z)$ to \eqref{eq:scnaive} with positive imaginary part. 
\e{prop}

This proposition is proved in Appendix \ref{app:limiting}. To state the local law, we introduce the following notation. Given $\eps,\de>0$, we let 
$$
\mathcal D_{\eps,\de}=\setof{E+i\eta \in \C}{\rho_\infty(E)\geq \eps,\quad \eta\in (N^{-1+\de},1)}
$$
where the first condition $\rho_\infty(E)\geq \eps$ puts us in the bulk of the spectrum. We say that a sequence of events $A_N$ holds \textit{almost surely as $N\to\infty$}, if $\mathbb P(A_N)\to 1$ as $N\to\infty$, where we recall that $\mathbb P$ is the uniform measure for $x\in [0,1]$.

\begin{thm}[Local law] \label{thm:LocalLaw}
Let $f\in C^2(\T)$ be an admissible evaluation function. Then, for every $\eps,\de>0$, 
\beq\label{eq:LL}
    \sup_{z\in\mathcal D_{\kappa,\de}} \l| \frac{1}{2N} \mathrm{Tr}{(H_X -z)^{-1}} - m_{\infty}(z)\r| \le \frac{N^{\delta}}{N \eta}
    \eeq
holds almost surely as $N\to\infty$.
\end{thm}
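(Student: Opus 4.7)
The plan is to prove the local law by the standard self-consistent resolvent scheme, adapted to the obstacle that the entries of $X$ are not independent but rather all functions of the single scalar $x$. I would proceed in four stages mirroring the paper's outline.

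First, the fundamental analytic difficulty is that a single entry $f(T^k x)$ determines the binary digits of $x$ from position $k+1$ onward and hence every other entry. To circumvent this, I would introduce a \emph{resampling step}: writing $x = 0.b_1 b_2 b_3\ldots$ in binary, note that $f(T^k x)$ depends only on bits $b_{k+1}, b_{k+2}, \ldots$. For each row of $X$ I would replace the binary digits more than $C\log N$ positions past the row's starting index by independent fresh bits, producing a new matrix $\tilde X$. Since $f\in C^2(\T)$ has rapidly decaying Fourier coefficients, each entry changes by at most a negative power of $N$ in $L^2$, so $H_{\tilde X}$ approximates $H_X$ well enough that a local law for the former transfers to the latter by resolvent perturbation. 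Crucially, the resampled entries $\tilde X_{ij}$ and $\tilde X_{i'j'}$ are now \emph{independent} once the dynamical distance between their starting times exceeds $C\log N$, so the model has \emph{logarithmic range of dependence}.

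Second, for $H_{\tilde X}$ I would derive a matrix self-consistent equation for the resolvent $G(z) = (H_{\tilde X}-z)^{-1}$ using Schur complement identities on the $2\times 2$ block structure. Expanding rows of $\tilde X$ and applying concentration of quadratic forms in weakly dependent variables gives, to leading order, an identity of the schematic form $M(z)^{-1} = -z\,I - \mathcal{S}[M(z)]$, where $\mathcal{S}$ is a linear superoperator encoding the two-point correlation data $\phi(j)$ from \eqref{eq:phidefn}. The log-range structure is exactly what allows the finite-range concentration machinery of \cite{Che2016} to be adapted, since blocks of $\tilde X$ separated by $C\log N$ units of dynamical time are genuinely independent. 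I would then reduce the matrix equation to the scalar self-consistent equation \eqref{eq:scnaive} by exploiting the Toeplitz form of $\Phi$: diagonalization of $\Phi$ in Fourier yields $M(z)$ as a multiplication operator with symbol $(g_f(x) m + z)^{-1}$, and tracing out recovers $\int_0^1(g_f(x)m+z)^{-1}\,\d x = -m$.

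Third, I would prove stability of the scalar equation in the bulk. Linearizing around $m_\infty(z)$, one needs invertibility of $1 - F'(m_\infty)$ where $F(m) = \int_0^1 (g_f(x)m+z)^{-1}\,\d x$, with the inverse controlled by $C/\eta$ uniformly on $\mathcal D_{\varepsilon,\delta}$. The admissibility hypothesis $g_f \geq g_{\min} > 0$ enters here, ensuring that $\Phi$ is strictly positive definite and that $\rho_\infty$ is non-degenerate throughout the bulk, which prevents the self-consistent equation from becoming singular away from the spectral edges.

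Finally, I would run a standard bootstrap to extract the local law \eqref{eq:LL} from the stability of the scalar equation and the concentration error. Starting at $\eta = 1$ where the bound is essentially trivial, one iteratively decreases $\eta$ by a small factor using continuity of $G(z)$ in $z$, at each step feeding the previous (weaker) bound into the concentration input to establish the stronger bound at the smaller scale. The outer supremum over $\mathcal D_{\varepsilon,\delta}$ is then handled by a polynomial-size net in $z$ together with the Lipschitz property of $G$. The principal obstacle is the concentration step in the second stage: finite-range tools do not apply verbatim to logarithmic range of dependence, and extracting an error bound small enough to close the bootstrap at scale $\eta = N^{-1+\delta}$ requires a careful quantitative exploitation of the dyadic decorrelation of the doubling map.
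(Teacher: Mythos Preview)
Your proposal is correct and follows essentially the same four-stage architecture as the paper: resample to a model with logarithmic range of dependence, derive a matrix self-consistent equation by adapting the finite-range concentration machinery of \cite{Che2016}, reduce to the scalar equation \eqref{eq:scnaive} via the Toeplitz structure of $\Phi$, establish stability in the bulk using admissibility, and close by a continuity/bootstrap in $\eta$.

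One point needs correcting. You describe the resampling as acting ``for each row of $X$,'' replacing bits beyond the row's starting index. Taken literally this does \emph{not} yield the log-range independence you then claim: all entries in a given row would share the same resampled tail and remain fully deterministically coupled, so concentration would fail when you Schur-complement out a column $j\le N$ of $H$ (which picks up an entire row of $X$). The paper instead resamples \emph{per orbit point}: for each $k$ one keeps the first $(\log N)^6$ binary digits of $T^k x$ and replaces the remainder by fresh independent bits $b_n^k$ indexed by $k$, so that $y_k$ and $y_{k'}$ are independent whenever $|k-k'|>(\log N)^6$ regardless of whether they sit in the same row. The entrywise error is then the deterministic pointwise bound $|f(T^kx)-f(y_k)|\le\|f'\|_\infty\,2^{-(\log N)^6}$ via the mean value theorem (not an $L^2$ bound from Fourier decay), and this super-polynomial smallness is what makes the resolvent comparison go through even at scales $\eta\sim N^{-1-\sigma}$.
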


Note that the right-hand side in \eqref{eq:LL} vanishes in the limit as long as $\eta\gg N^{-1+\de}$ with $\de>0$ arbitrary, so the local law descends arbitrarily close to the optimal scale $N^{-1}$ where individual eigenvalues become resolved.

We can identify $m_\infty(z)$ as the Stieltjes transform of a limiting spectral distribution which we call $\rho_\infty(x)$. This $\rho_\infty(x)$ is a deterministic probability density function which generalizes the Wigner semicircle law $\rho_{\mathrm{sc}}(x)=\frac{1}{2\pi}\sqrt{4-x^2}\mathbbm 1_{[-2,2]}(x)$ to our correlated setting (see Example \ref{ex:sc} below). 

\be{prop}\label{prop:rhoinfty}
The function $m_\infty(z)$ is the Stieltjes transform of a continuous measure $\rho_\infty(x)\d x$, i.e.,
\beq
m_\infty(z)=\int_{\R}\frac{1}{x-z} \rho_\infty(x)\d x,\qquad \mathrm{Im}\, z>0.
\eeq
By the Stieltjes inversion formula, 
$$
\rho_\infty(E)=\lim_{\eta\to0} \frac{1}{\pi}\mathrm{Im}\, m_\infty(E+i\eta),\qquad E\in\R.
$$
\e{prop}

The proof of this proposition is deferred to Appendix \ref{app:limiting}. Notice that the density $\rho_\infty(x)$ is deterministic and only depends on the choice of evaluation function $f$.

\be{ex}\label{ex:sc}
For the evaluation function $f(x)=\exp(2\pi \ti x)$, we have $g_f(x)=1$ and $\rho_\infty(x)=\rho_{\mathrm{sc}}(x)=\frac{1}{2\pi}\sqrt{4-x^2}\mathbbm 1_{[-2,2]}(x)$, the Wigner semicircle law. See Figure \ref{fig:GlobalLaw} for a pictorial representation of the emergent global law in this special case of Theorem \ref{thm:LocalLaw}.
\e{ex}

\begin{figure}
    \includegraphics[scale=0.75]{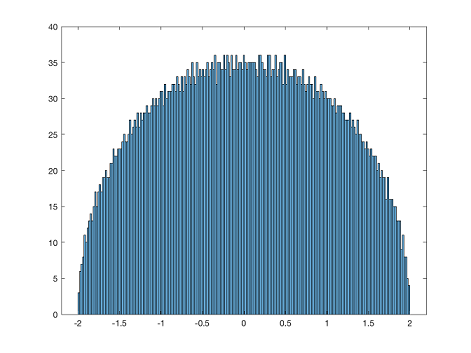}
    \caption{The empirical spectral distribution of matrices given by \eqref{eq:matrix} with $f(x)=\exp(2\pi \ti x)$. This histogram depicts the emergence of the Wigner semicircle law on the global scale. Theorem \ref{thm:LocalLaw} also establishes that this convergence continues to hold down to small scales.}
        \label{fig:GlobalLaw}
\end{figure}

\begin{rmk}
Let us explain the form of the self-consistent equation \eqref{eq:scnaive}. In the proof, it arises in the form
\beq\label{eq:sc}
-\frac{1}{m_\infty(z)}m_{\Phi}\l(-\frac{z}{m_\infty(z)}\r)=m_\infty(z).
\eeq
where $m_{\Phi}\equiv m_{\rho_{\Phi}}$ is the Stieltjes transform of the spectral density of the infinite Toeplitz matrix $\Phi$. Since $\Phi$ is unitarily equivalent to multiplication by $g_f$ in Fourier space, we have the explicit representation
$$
m_{\Phi}(z)=\int_{0}^1 \frac{1}{g_f(x)-z}\d x,\qquad \mathrm{Im}\, z>0,
$$
which connects \eqref{eq:sc} to \eqref{eq:scnaive}.
\e{rmk}


\subsection{Main result --- Universality}
We are now ready to state our main universality result. We write $\lambda_1,\ldots, \lam_{2N}$ for the eigenvalues of { $H_X$} and $p_N^N:\R^{2N}\to\R$ for the symmetrized eigenvalue density of $H_X$. The $k$-point correlation functions $p_N^k$ are defined by
$$
p_N^k(\lam_1,\ldots,\lam_k)=\int p_N^N(\lam_1\,\ldots,\lam_N)\d\lam_{N-k+1}\ldots\d\lam_{N},\qquad k\geq 1.
$$

 The universal objects are $p^{(k)}_{N,\mathrm{GUE}}$, defined analogously as the $k$-point correlation functions of a matrix from the Gaussian Unitary Ensemble. They are explicitly computable, e.g.,
 $$
p^{(k)}_{N,\mathrm{GUE}}(x_1,\ldots,x_k)=\det(K_N(x_i,x_j))_{1\leq i,j\leq k}
$$
where $K_N(x,y)$ has an explicit formula in terms of Hermite polynomials \cite{bYau}.

We use the convention that for $\bm{\al}=(\al_1,\ldots,\al_k)\in \R^k$,
$$
E+\bm{\al}=(E+\al_1,\ldots,E+\al_k).
$$

\begin{thm}[Main result --- Universality] \label{thm:main}
Let $f\in C^4(\mathbb T)$ be an admissible evaluation function. Let $k\geq 1$ and let $O:\mathbb{R}^k \to \mathbb{R}$ be smooth and compactly supported. Then, for $\eps>0$ and $\rho(E)\geq \eps$, there exists $\kappa>0$ such that
\begin{equation}\label{eq:main}
        \int_{\mathbb{R}^k} O(\bm{\al})
        \left[p_N^k\left(E+\frac{\bm{\al}}{2N \rho_\infty(E)}\right)-p_{N,\mathrm{GUE}}^k\left(E+\frac{\bm{\al}}{2N \rho_{\mathrm{sc}}(E)}\right) \right]
       \d^k\bm{\al}        
= \textnormal{O}(N^{-\kappa})
\end{equation}
\end{thm}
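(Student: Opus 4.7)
The plan is to follow the three-step strategy of modern random matrix theory---local law, Dyson Brownian motion relaxation, and Green's function comparison---adapted to the peculiar dependence structure of $H_X$, where a single matrix entry deterministically determines every other entry through its dynamical history. The overarching difficulty is that standard correlation-decay hypotheses fail completely here, so before any of the standard universality machinery applies we must first reduce the problem to a more tractable ensemble that retains the correct local law but admits enough independence for Dyson Brownian motion arguments.

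\textbf{Step 1 (resampling reduction).} First, I replace $H_X$ by a resampled matrix $\tilde H$ in which only matrix entries whose dynamical time labels differ by at most $O(\log N)$ remain dynamically coupled, while distant entries are produced from independent copies of the orbit sampled from $\mathbb P$. Because the doubling map is exponentially mixing, two entries $f(T^{k_1}x)$ and $f(T^{k_2}x)$ with $|k_1-k_2|\gg\log N$ have joint distribution exponentially close to the product measure, so this swap should not alter the local spectral statistics on the microscopic scale $1/N$. This replacement is introduced in Section \ref{sec:Step1}, where one verifies that the $k$-point correlation functions of $H_X$ and $\tilde H$ coincide up to $o(1)$. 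This is also what makes it possible to adapt techniques designed for finite-range-dependent ensembles such as those of \cite{Che2016} to the present logarithmic range.

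\textbf{Step 2 (OU flow).} Next, with the local law of Theorem \ref{thm:LocalLaw} in hand for $\tilde H$, I evolve $\tilde H$ under an Ornstein--Uhlenbeck flow that adds an infinitesimal independent GUE component while preserving its first two moments,
\begin{equation}
    H_t = e^{-t/2}\tilde H + \sqrt{1-e^{-t}}\, G,\qquad G\sim\mathrm{GUE},
\end{equation}
for $t=N^{-1+\tau}$ with a small $\tau>0$. The local relaxation theory for Dyson Brownian motion of Erd\H{o}s--Schlein--Yau, reviewed in \cite{bYau}, then shows that the bulk $k$-point correlation functions of $H_t$ at the scale $1/(N\rho_\infty(E))$ coincide with those of GUE up to an error $O(N^{-\kappa})$; the only prerequisite beyond standard properties of $\rho_\infty$ is a local law at scales $\eta\gg N^{-1+\delta}$, which Theorem \ref{thm:LocalLaw} supplies.

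\textbf{Step 3 (Green's function comparison).} Finally, I undo the OU perturbation by a Green's function comparison. Since $e^{-t/2}\tilde H_{ij}+\sqrt{1-e^{-t}}\,G_{ij}$ shares the first two moments of $\tilde H_{ij}$ and its higher moments differ by at most $O(t^{3/2})$, a Lindeberg-type swapping expansion to fourth order---which is where the $C^4$ hypothesis on $f$ enters naturally---combined with the local law yields that the $k$-point correlation functions of $\tilde H$ and $H_t$ agree up to $O(N^{-\kappa})$ when tested against smooth $O$. Chaining Steps 1--3 yields \eqref{eq:main}. The main obstacle is Step 1: one must quantify precisely how dynamical mixing on a logarithmic window translates into enough statistical independence to preserve $k$-point functions at the finest spectral scale, uniformly in the bulk parameter $E$. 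The subsequent steps, while technical, follow the well-developed playbook once the resampled ensemble has been brought into a setting where the local law and the DBM relaxation estimates hold.
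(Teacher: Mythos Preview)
Your high-level three-step architecture is correct and matches the paper, but Step 3 as you describe it contains a genuine gap, and the paper's version of Steps 2--3 is organized differently to avoid exactly that gap.

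The problem is your claim that $e^{-t/2}\tilde H_{ij}+\sqrt{1-e^{-t}}\,G_{ij}$ ``shares the first two moments of $\tilde H_{ij}$''. For a \emph{correlated} ensemble this is false at the level of the joint distribution: the off-diagonal covariances $\mathrm{Cov}(H_{t,ij},\overline{H_{t,kl}})=e^{-t}\xi_{ijkl}/N$ do not match $\xi_{ijkl}/N$ when $G$ is an independent GUE, so a Lindeberg swap picks up second-order, not third-order, errors. A fourth-moment expansion does not rescue this; you would have to separately control the $O(t)$ covariance defect summed over all $(\log N)^6$-correlated pairs, which is a different calculation than the one you sketch. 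The paper sidesteps this by running an OU flow whose driving Brownian motion $B$ carries the \emph{same} covariance tensor $\xi$ as $H$, i.e.\ $\mathrm{Cov}[B_{ab}(t)\overline{B_{cd}}(t)]=t\xi_{abcd}$. Then It\^o's formula plus an integration-by-parts lemma (Lemma \ref{lem:IntbyParts}) produces exact cancellation of the drift and diffusion terms to second order, leaving only a genuinely third-order error (Lemma \ref{lem:TimeCompare}). At time $t$ the matrix $H(t)$ has a Gaussian component with covariance $\xi$; because $f$ is admissible, $\xi$ is strictly positive definite, so this Gaussian splits as a correlated Gaussian plus an independent GUE, and only \emph{then} is the DBM/Landon--Sosoe--Yau input invoked. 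So the order is: covariance-preserving comparison first, extraction of the GUE component second---the reverse of your Steps 2--3.

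Two smaller points. First, the resampling in the paper is not ``independent copies of the orbit'' but a binary-digit truncation: one keeps the first $(\log N)^6$ digits of $T^kx$ and replaces the tail by independent coin flips. This makes the comparison $H_X\leftrightarrow H_Y$ a one-line \emph{deterministic} resolvent bound (Lemma \ref{lm:GFCL}), with error $\sim 2^{-(\log N)^6}$, rather than a probabilistic mixing estimate. Second, you call Step 1 ``the main obstacle'', but in fact it is the cheapest part; the real work lies in proving the local law for the resampled ensemble $H_Y$ (Sections \ref{sec:self-consist}--\ref{sec:locallaw}), which requires reducing a matrix self-consistent equation to a scalar one via the Toeplitz structure of the covariance and then running a continuity argument with careful control of the stochastic parameters $\Gamma,\gamma$.
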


\begin{rmk}
\begin{enumerate}[label=(\roman*)]
\item The condition $\rho_f(x)\geq \eps$ means that we work in the bulk of the spectrum. It may be possible to prove similar results at the spectral edge, but this is not our focus here.
\item 
While the results are stated as holding almost surely as $N\to\infty$, the proof quantifies the convergence speed as polynomial. Since this point is not our focus here, we use the qualitative notion for simplicity of presentation.
\item There exist universality result for correlated matrices in the literature \cite{Che2016,EKS}. However, these do not apply to our dynamically defined model. For instance, the condition on correlation decay in \cite{EKS} is that for any pair of bounded functions $F$ and $G$ applied to entries $H_{ij}$ and $H_{kl}$ ,
 \begin{equation}\label{eq:eks}
    \mathbb{E}[F(H_{ij}) G(H_{kl})] \le C \|F\|_{\infty} \|G\|_{\infty} \frac{1}{(|i-k| + |j -l|)^{12}}.
\end{equation}
For our model, fixing $i,j,k,l$ and taking $F=\mathrm{id}$ and $G=T^{p}$ for an appropriate power $p$, the left-hand side equals $\mathbb E[|H_{kl}|^2]=N^{-1}$, a constant. Since this does not decay at all, the correlation condition \eqref{eq:eks} breaks down in our model.
\end{enumerate}
\end{rmk}

\be{rmk}[Open problem]
The formulae \eqref{eq:matrix} and \eqref{eq:Hdefn} defining our matrix ensemble can be generalized to dynamically define a matrix ensemble starting from \textit{any} dynamical system $(X,T,\mu)$. It is an interesting open problem, lying on the interface of dynamical systems theory and random matrix theory, to extend the present universality result to a much broader class of dynamical systems. Natural candidates are dynamical systems satisfying a quantitative mixing assumption, e.g., subshifts of finite type building on the potential formalism developed in \cite{Bowen}; see also \cite{ADZ}.
\e{rmk}

\subsection{Delocalization of eigenvectors}
In physics, the delocalization of eigenvectors is seen as a hallmark of the random matrix theory phase. In a standard way, the precise control on Green's function established in proving of Theorem \ref{thm:LocalLaw} implies eigenvector delocalization bounds in the bulk of the spectrum.

We use the convention that eigenvectors $u_\al\in \R^N$ are $\ell^2$-normalized and so a completely delocalized eigenvector, which is roughly equally supported on all coordinates, would have $\max_{1\leq i\leq N} |u_\alpha(i)|^2\sim N^{-1}$.

\begin{thm}[Delocalization bound for eigenvectors]\label{thm:deloc}
Let{  $H_X u_\alpha=Eu_\alpha$ }for some $E\in\R$ satisfying $\rho_\infty(E)\geq \eps$ for some $\epsilon>0$. Then, there exists $\de>0$ such that
$$
\max_{1\leq i\leq N} |u_\alpha(i)|^2 \leq N^{\de-1} 
$$
holds almost surely as $N\to\infty$.
\end{thm}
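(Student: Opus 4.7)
The bridge between eigenvector delocalization and the local law is the standard spectral identity
\[
\mathrm{Im}\, G_{ii}(E+i\eta) \;=\; \sum_{\alpha=1}^{2N} \frac{\eta\, |u_\alpha(i)|^2}{(E-\lambda_\alpha)^2 + \eta^2},
\]
where $G(z) = (H_X - z)^{-1}$ and $\lambda_\alpha$ are the eigenvalues of $H_X$. Since we are given $H_X u_\alpha = E u_\alpha$, the $\alpha$-th term alone yields $\mathrm{Im}\, G_{ii}(E+i\eta) \geq |u_\alpha(i)|^2 / \eta$, hence
\[
|u_\alpha(i)|^2 \;\leq\; \eta\,\mathrm{Im}\, G_{ii}(E+i\eta).
\]
The strategy is then to take $\eta = N^{-1+\de/2}$ and combine this with an entrywise almost-sure bound $|G_{ii}(E+i\eta)| \leq C$, so that $|u_\alpha(i)|^2 \leq C \eta = C N^{-1+\de/2}$; a union bound over the $2N$ indices $i$ (polynomial cost, absorbed into a slight enlargement of $\de$) then gives the claim.

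The substance of the proof is therefore to upgrade the trace local law of Theorem \ref{thm:LocalLaw} to an \emph{entrywise} local law of the form
\[
\max_{1\leq i\leq 2N}\,|G_{ii}(z) - m_\infty(z)| \;\leq\; \frac{N^\de}{\sqrt{N\eta}}, \qquad z\in\mathcal{D}_{\eps,\de},
\]
which in particular yields the uniform bound $|G_{ii}(z)|\leq C$ needed above. Although this entrywise estimate is not the formal statement of Theorem \ref{thm:LocalLaw}, it is built into its proof: the matrix self-consistent equation derived in Section \ref{sec:matrissc} and the stability analysis of Section \ref{sec:analysis} control the full $2N\times 2N$ matrix $G - m_\infty \,\mathbf I$ in a suitable norm, not merely its normalized trace. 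Concretely, I would revisit the inductive scheme of Section \ref{sec:locallaw}, verify that the concentration and stability bounds used there apply componentwise, and extract the maximal entrywise deviation estimate.

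\textbf{Main obstacle.} The non-routine step is the verification that the entrywise estimate indeed falls out of the scheme used for Theorem \ref{thm:LocalLaw}. Two specific issues deserve care: first, the resampling trick introduced in Section \ref{sec:Step1} replaces $H_X$ by a modified matrix, so one must check that entrywise Green's function bounds for the resampled model transfer back to $H_X$ (this is the same comparison that allows the trace local law to descend to the original model); second, dyadic correlations of the entries force the concentration inputs to be controlled on the logarithmic range of dependence, and I would need to ensure that the componentwise (rather than just tracial) application of these concentration estimates does not lose the required $(N\eta)^{-1/2}$-type bound. Once the entrywise local law is in hand, the spectral-decomposition argument above finishes the proof in a line.
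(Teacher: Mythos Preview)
Your proposal is correct and is precisely the standard argument the paper has in mind: the paper does not spell out a proof of Theorem~\ref{thm:deloc} but simply remarks that ``the precise control on Green's function established in proving Theorem~\ref{thm:LocalLaw} implies eigenvector delocalization bounds in the bulk of the spectrum.'' Your spectral-decomposition argument is exactly this standard route.

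Two small clarifications that make your ``main obstacle'' section unnecessary. First, the entrywise local law you need is not merely implicit in the scheme---it is stated and proved explicitly in Section~\ref{sec:locallaw}, equation~\eqref{eq:locallaws}, in the form
\[
\mathbb{P}\Bigl(\sup_{z\in\mathcal D_\nu,\,i,j}\,|G_{ij}-\mathcal M_{ij}|\ge \tfrac{N^\sigma}{\sqrt{N\eta}}\Bigr)\le N^{-\sigma p},
\]
so no additional work is required to extract it, and the supremum over $i$ is already inside the probability (hence no separate union bound over coordinates is needed). Second, the transfer from the resampled matrix $H_Y$ back to $H_X$ is immediate at the entrywise level: the proof of Lemma~\ref{lm:GFCL} bounds $|(G_X-G_Y)_{ij}|$ via the resolvent identity and the pointwise estimate $|f(T^kx)-f(y_k)|\le \|f'\|_\infty 2^{-(\log N)^6}$, and since you are working at $\eta=N^{-1+\de/2}$ (rather than the sub-scale $\eta=N^{-1-\sigma}$ of Lemma~\ref{lm:GFCL}) the comparison is strictly easier.
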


\subsection{Proof strategy}

Overall, our proof relies on the famous $3$-step strategy for proving universality developed in the past years. However, the dynamical nature of correlations in our matrix ensemble requires some new ideas which can be summarized as follows.
\be{itemize}
\item The first novel feature of our proof is a \textit{dynamical resampling trick}
 which we use to ``preprocess'' the dynamically defined ensemble $H_X$ into another one, called $H_Y$, whose entries become independent after $(\log N)^6$ many discrete time steps. The resampling procedure is based on the binary expansion because the doubling map then acts as a digit shift. Importantly, the construction is such that the Green's functions before and after resampling agree on scales $\textnormal{Im}[z] \ll N^{-1}$. This means that their spectra are essentially the same and so it suffices to prove local law and universality for the ensemble $H_Y$ with its logarithmic range of dependence.
 
 \item In a second step, we prove local law and universality for the eigenvalues of  {$H_Y$} with its logarithmic range of dependence by extending Che's analysis for finite range of dependence \cite{Che2016}. The possibility that these methods could be extended to range of dependence to be logarithmic, even to a logarithmic power of a logarithm, was already raised by Che.
 \item A problem compared to the situation studied in \cite{Che2016} is that we no longer have strict positivity of the correlation matrix and Lipschitz continuity of the correlations. We address these issues by leveraging the block structure and symmetries to reduce to a scalar self-consistent equation for the Stieltjes transform. For this scalar equation, we can recover positive definiteness by using the Ward identity, the assumption that $f$ is an admissible evaluation functions and techniques in the spectral theory of Toeplitz matrices, e.g., turning a banded Toeplitz matrix into a circulant matrix (whose spectrum is explicitly computable) by a finite-rank perturbation.
 \item Other building blocks of the proofs, e.g., the fast equilibration of Dyson Brownian Motion, are standard in the field and are accordingly only summarized briefly.  
 
  \e{itemize}

We use the convention that $C$ denotes a generic positive constant that is independent of $N$ and whose value may change from line to line.

\section{Step I: Resampling to logarithmic range of dependence} \label{sec:Step1}
\subsection{Resampling binary digits}
Recall that $x$ is the starting point of the doubling map dynamics in \eqref{eq:matrix}. We write
$$
x= \sum_{n\geq 1}d _n 2^{-n},\qquad d_n\in\{0,1\},
$$ 
for its binary expansion. The doubling map acts as a digit shift in the binary basis,
$$
T^k x=\sum_{n\geq 1} d_{n+k} 2^{-n},\qquad k\geq 0
$$
We let $\{b^k_n\}_{n,k\geq 1}$ be a two-parameter family of independent coin flips with outcomes $\{0,1\}$ occurring with probability $\tfrac{1}{2}$ each. We then define the resampling of $T^k x$ by
\beq\label{eq:ykdefn}
y_k= \sum_{1\leq n\leq (\log N)^6} d_{n+k} 2^{-n} +\sum_{n\geq (\log N)^6+1} b_n^k 2^{-n},\qquad \textnormal{where } \log N\equiv \log_2 N.
\eeq
(The choice of $6$ is somewhat arbitrary; any sufficiently large power of a logarithm will do.)

The resampled matrix ensemble is given by
\begin{equation} \label{eq:Ydefn}
Y=     \frac{1}{\sqrt{N}}
\begin{pmatrix}
 f(y_1) & f(y_2) &\ldots &f(y_N)\\
 f(y_{2N+1}) & f(y_{2N+2}) &\ldots &f(y_{3N})\\  
\vdots & \vdots & \ddots & \vdots  \\
 f(y_{2N^2-N+1}) & f(y_{2N^2-N+2}) &\ldots &f(y_{2N^2})\\  
\end{pmatrix}\end{equation}
with Hermitization
\begin{equation} \label{eq:Tildmat}
H_Y = \begin{pmatrix} 
    0 & Y\\
    Y^{\dagger} & 0 
    \end{pmatrix}.
\end{equation}

\subsection{Local law and universality for the resampled ensemble}
The advantage of the resampled ensemble $H_Y$ is that in contrast to the original ensemble $H_X$ it  only has logarithmic range of dependence: for $|a-b|>(\log N)^6$, the random variables $y_a$ and $y_b$ are independent. In Step 2, we will use this fact to prove that the analogs of our main results (local law and universality) indeed hold for $H_Y$. For completeness, they are stated here.

\begin{thm}[Local law after resampling] \label{thm:auxLocalLaw}
Let $f\in C^2(\T)$ be an admissible evaluation function. Then, for every $\eps,\de>0$, 
\beq\label{eq:LL0}
    \sup_{z\in\mathcal D_{\kappa,\de}} \l| \frac{1}{2N} \mathrm{Tr}{(H_Y -z)^{-1}} - m_{\infty}(z)\r| \le \frac{N^{\delta}}{N \eta}
    \eeq
holds almost surely as $N\to\infty$.
\end{thm}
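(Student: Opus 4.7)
The plan is to prove Theorem~\ref{thm:auxLocalLaw} by the standard pipeline for local laws adapted to Che's framework for finite range of dependence~\cite{Che2016}, exploiting the fact that after resampling the entries of $H_Y$ are independent beyond a polylogarithmic window. Concretely, I would (i) derive a matrix self-consistent equation for the Green's function $G(z):=(H_Y-z)^{-1}$, (ii) reduce it, via the block anti-diagonal structure of $H_Y$ and the Toeplitz symmetry of the covariances, to the scalar self-consistent equation \eqref{eq:sc} whose unique solution with positive imaginary part is $m_\infty(z)$ (Proposition~\ref{prop:minfty}), (iii) prove quantitative stability of this scalar equation, and (iv) bootstrap in $\eta$ from $\eta\sim 1$ down to $\eta\sim N^{-1+\delta}$. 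This mirrors the section layout of Sections~\ref{sec:self-consist}--\ref{sec:locallaw}.

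First, I would apply Schur complement / row-removal identities to $H_Y$ to express the diagonal entries $G_{ii}$ in terms of a quadratic form in $G^{(i)}$ (the Green's function with the $i$-th row/column removed) and the $i$-th row of $Y$. The key probabilistic input is replacing this quadratic form by its expectation. Here the logarithmic range of dependence becomes crucial: since $y_a$ and $y_b$ are exactly independent once $|a-b|>(\log N)^6$, each row of $Y$ depends only on a polylogarithmic number of ``neighboring'' rows. A martingale/large-deviation argument decomposed over blocks of length $(\log N)^6$, together with uniform boundedness of $f\in C^2(\T)$, then yields concentration of the quadratic form around its mean with an error of the form $N^{\delta}(N\eta)^{-1/2}$ on $\mathcal D_{\eps,\delta}$, up to polylogarithmic losses from the range of dependence that are easily absorbed into $N^{\delta}$.

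Next, I would average the resulting matrix equation and reduce it to the scalar equation \eqref{eq:sc}. The $2N$-time-step gap between consecutive rows in \eqref{eq:Ydefn} ensures, after resampling, that distinct rows are (up to a negligible tail of shared binary digits) independent, so the intra-row covariance structure is the dominant one; this intra-row covariance is precisely the infinite Toeplitz matrix $\Phi$ of \eqref{eq:Phidefn}, whose Fourier symbol is $g_f$. The block anti-diagonal form of $H_Y$ collapses the matrix equation to a scalar one, giving $-m^{-1}m_\Phi(-z/m)=m$ as advertised in the remark at the end of Section~\ref{sec:model}. For stability, linearizing around $m_\infty$ produces a stability operator whose invertibility on $\mathcal D_{\kappa,\delta}$ follows from the strict positivity of $\Phi$ (equivalent to $g_{\min}>0$, i.e. admissibility of $f$) combined with the Ward identity $\sum_j|G_{ij}|^2=\eta^{-1}\,\mathrm{Im}\,G_{ii}$, which converts $\ell^2$-information into pointwise control. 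Finally, the usual continuity-in-$\eta$ bootstrap, starting from the trivial bound at $\eta=1$ and iterating $O(\log N)$ halvings, propagates the local law down to $\eta=N^{-1+\delta}$; uniformity in $z$ is obtained by a grid plus Lipschitz argument.

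The main obstacle, as flagged in the proof-strategy subsection of Section~\ref{sec:model}, is that strict positivity (and Lipschitz continuity) of the full covariance matrix, which is assumed throughout \cite{Che2016}, \emph{fails} in our setting: strict positivity is available only on the scalar level via $g_{\min}>0$. Consequently, one cannot apply Che's stability analysis as a black box at the matrix level; instead, the block and Toeplitz structure must be exploited to perform the reduction to the scalar equation \emph{before} stability is invoked. The second, more bookkeeping-level difficulty is that the range of dependence is logarithmic rather than finite, so every concentration estimate and every expansion must carefully track factors of $(\log N)^6$ to ensure they are absorbed by $N^{\delta}$; this was anticipated by Che, but the execution is delicate in the averaged equation where many rows with partially overlapping dependence windows interact.
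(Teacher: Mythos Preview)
Your proposal is correct and follows essentially the same route as the paper's proof in Sections~\ref{sec:self-consist}--\ref{sec:locallaw}: a Che-type matrix self-consistent equation with polylogarithmic-block row removal, reduction to the scalar equation \eqref{eq:sc} via the block/Toeplitz structure, stability from $g_{\min}>0$, and a continuity bootstrap on a grid. One small sharpening: you write $G^{(i)}$ with a single row removed, but to actually decouple the quadratic form from the Green's function you must excise the full window $T=[j-(\log N)^6,j+(\log N)^6]$ (and then a further set $S$ of indices correlated with $T$), exactly as in Lemmas~\ref{Lm:Resol}--\ref{lm:prelimSC} and the proof of Lemma~\ref{lm:scGF}; the paper also tracks a second stochastic control parameter $\gamma$ (Definition~\ref{def:StCo}) governing $\|(G^{(J)}_{I,I})^{-1}\|$, whose boundedness (Lemma~\ref{lem:BoundGamma}) is the nontrivial input closing the bootstrap.
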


We write $\tilde p_N^k$ for the $k$-point correlation function associated to the eigenvalues of $H_Y$.

 \begin{thm}[Universality after resampling] \label{thm:auxmain}
Let $f\in C^4(\mathbb T)$ be an admissible evaluation function. Let $k\geq 1$ and let $O:\mathbb{R}^k \to \mathbb{R}$ be smooth and compactly supported.
Then, for $\eps>0$ and $\rho(E)\geq \eps$, there exists $\kappa>0$ such that
\begin{equation}\label{eq:mainresampling}
        \int_{\mathbb{R}^k} O(\bm{\al})
        \left[\tilde p_N^k\left(E+\frac{\bm{\al}}{2N \rho_\infty(E)}\right)-p_{N,\mathrm{GUE}}^k\left(E+\frac{\bm{\al}}{2N \rho_{\mathrm{sc}}(E)}\right) \right]
       \d^k\bm{\al}        
=\mathcal{O}(N^{-\kappa})
\end{equation}
as $N\to\infty$.
\end{thm}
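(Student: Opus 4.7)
The plan is to execute the three-step strategy for universality, adapted to the logarithmically correlated ensemble $H_Y$, with the Ornstein--Uhlenbeck interpolation doing the heavy lifting (as foreshadowed by Section~\ref{sec:OU} in the table of contents). Concretely, I would introduce the matrix flow
\beq
H_Y^t = e^{-t/2} H_Y + \sqrt{1-e^{-t}}\, G,
\eeq
where $G$ is an independent GUE matrix with the same block structure as $H_Y$. The OU choice is convenient because the first two moments of the entries of $H_Y^t$ match those of $H_Y$, and the covariance profile (hence the self-consistent equation and the limiting density $\rho_\infty$) is preserved along the flow.

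For the first step, I would argue that Theorem~\ref{thm:auxLocalLaw} continues to hold for $H_Y^t$ uniformly in $t\in[0,1]$. Since $H_Y^t$ still has the same block Hermitization structure, same covariance profile, and still enjoys (at least) logarithmic range of dependence after adding an independent Gaussian, the proof of the local law for $H_Y$ goes through essentially verbatim. From the local law on $H_Y^t$ one derives eigenvalue rigidity on the optimal scale inside the bulk, which is the input needed for the next step.

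For the second step, for any $t \ge N^{-1+\tau}$ with $\tau > 0$, one invokes the now-standard fast relaxation results for Dyson Brownian Motion (as in the works of Erd\H{o}s--Schlein--Yau, Landon--Yau, etc.): given rigidity for the initial data $H_Y$, the local eigenvalue statistics of $H_Y^t$ in the bulk coincide, up to error $O(N^{-\kappa})$, with those of a pure GUE matrix. This yields Theorem~\ref{thm:auxmain} for the matrix $H_Y^t$ in place of $H_Y$.

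The main obstacle is the third step: a Green's function comparison showing that the bulk $k$-point functions of $H_Y$ and of $H_Y^t$ for $t = N^{-1+\tau}$ differ by $O(N^{-\kappa})$. The standard Lindeberg swap does not directly apply because the entries of $H_Y$ are strongly dependent. The natural route is a cumulant (or Stein-type) expansion of $\mathbb{E}\partial_t \mathrm{Tr}\, F((H_Y^t - z)^{-1})$, in which each term is a product of resolvent entries multiplied by a joint cumulant of entries $y_{k_1},\ldots,y_{k_p}$. Here the logarithmic range of dependence is decisive: joint cumulants vanish unless all of the indices $k_1,\ldots,k_p$ lie within a window of size $(\log N)^6$, and the number of such index tuples is only polylogarithmic in $N$. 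Combined with the a priori bounds on resolvent entries supplied by the local law (Theorem~\ref{thm:auxLocalLaw}) and an iterative resolvent identity to generate additional smallness, each higher-cumulant term gains a factor $N^{-1/2 + o(1)}$ per extra index, so the cumulant expansion truncates with an acceptable remainder. This is precisely the extension of Che's finite-range framework \cite{Che2016} to polylogarithmic range that the introduction alludes to, and verifying the combinatorics of the cumulant expansion at logarithmic range is what I expect to be the technically demanding portion of the argument.
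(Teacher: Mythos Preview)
Your three-step outline matches the paper's architecture, but your setup of the OU flow contains an error. If $G$ is a GUE block matrix with independent entries, then the covariance of $H_Y^t=e^{-t/2}H_Y+\sqrt{1-e^{-t}}\,G$ is $e^{-t}\xi+(1-e^{-t})\delta$, not $\xi$; your claim that ``the first two moments \ldots\ match those of $H_Y$, and the covariance profile \ldots\ is preserved along the flow'' is false. Consequently in Step~3 the second-order contributions to $\partial_t\mathbb{E}[F(H_Y^t)]$ do not cancel: after any Stein/cumulant expansion of the drift you are left with a residual $\tfrac{1}{N}\sum_{(a,b)\ne(c,d)}(\xi-\delta)_{abcd}\,\mathbb{E}[\partial_{ab}\overline{\partial_{cd}}F]$ in addition to the higher-cumulant terms. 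This piece is in principle controllable via the Ward identity and the local law, but your sketch (which relies on the claimed second-moment matching) does not account for it.

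The paper avoids this entirely by driving the OU with a Brownian motion whose covariance is exactly $\xi$ (Section~\ref{sec:OU}). Then $H(t)$ really does have the same first two moments as $H(0)$ for every $t$, the It\^o second-order term cancels identically against the second-order Stein term (Lemma~\ref{lem:IntbyParts}), and only a third-derivative remainder of order $tN^{-1/2}(\log N)^{12}$ survives (Lemma~\ref{lem:TimeCompare}); no higher-cumulant combinatorics is needed. The GUE component required for the DBM step is extracted \emph{after} the flow rather than supplied \emph{by} it: the Gaussian part of $H(t)$ has covariance proportional to $\xi$, admissibility of $f$ makes $\xi$ strictly positive on the relevant block (via Lemma~\ref{lm:PhiNest}), so one writes this Gaussian as an independent sum $\tilde G + c\,\mathrm{GUE}$ and applies \cite{LSY}. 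Thus the ``technically demanding'' cumulant expansion you anticipate is replaced by exact second-moment matching plus a linear-algebra decomposition of a Gaussian.
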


Before we prove these theorems, we use them to derive Theorems \ref{thm:LocalLaw} and \ref{thm:main}.

\subsection{Green's function comparison lemma}
Here and in the following we denote
$$
G_X(z)= (H_X-z)^{-1},\quad G_Y(z) = (H_Y-z)^{-1}, \qquad \mathrm{Im}\, z>0
$$
and
$$
m_X(z)=\frac{1}{2N}\mathrm{Tr} G_X(z),\qquad m_Y(z)=\frac{1}{2N}\mathrm{Tr}G_Y(z).
$$
From now on, we always assume that $f\in C^2(\T)$ is an admissible evaluation function.

\begin{lm}[Green's Function Comparison Lemma]\label{lm:GFCL}
Let $n\geq 1$, $\eps>0$ and let $E_1,\ldots,E_n$ satisfy { $\rho_\infty(E_i)\geq \eps$ and $\rho_{\infty}$ is the measure associated to $m_{\infty}$}. Given $\sigma_1,\ldots,\sigma_n\leq \sigma$, we set
$$
z_j=E_j+\ti \eta_j,\qquad \eta_j=N^{-1-\sigma_j}.
$$
Then, there exists $C_\sigma>0$ depending only on $\sigma$ such that
$$
\l|\prod_{k=1}^n\mathrm{Im}\, m_X(z_k)-\prod_{k=1}^n\mathrm{Im}\, m_Y(z_k)\r|\leq \frac{C_\sigma}{N}.
$$
   \end{lm}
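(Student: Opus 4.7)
The plan is to exploit the fact that the resampling perturbs matrix entries by a super-polynomially small amount, which will easily absorb any polynomial blow-up coming from the sub-microscopic spectral parameters $\eta_j = N^{-1-\sigma_j}$. By construction, the binary expansions of $T^k x$ and $y_k$ coincide in their first $(\log N)^6$ digits, so $|T^k x - y_k| \leq 2^{-(\log N)^6}$ deterministically for every $k \geq 1$. Since $f \in C^2(\T)$ is in particular Lipschitz, this gives $|X_{ij} - Y_{ij}| \leq \|f'\|_\infty N^{-1/2}\, 2^{-(\log N)^6}$, and bounding operator norm by Frobenius norm yields
\begin{equation*}
\|H_X - H_Y\| = \|X - Y\|_{\mathrm{op}} \leq C \sqrt{N} \cdot 2^{-(\log N)^6}.
\end{equation*}
Since $2^{-(\log_2 N)^6} = N^{-(\log_2 N)^5}$, this quantity is super-polynomially small in $N$, and this is the engine of the whole argument.

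Next, I would invoke the resolvent identity $G_X(z) - G_Y(z) = G_X(z)(H_Y - H_X) G_Y(z)$ together with the trivial bound $\|G_X(z_j)\|, \|G_Y(z_j)\| \leq \eta_j^{-1} \leq N^{1+\sigma}$ to get
\begin{equation*}
|\mathrm{Im}\, m_X(z_j) - \mathrm{Im}\, m_Y(z_j)| \leq |m_X(z_j) - m_Y(z_j)| \leq \|G_X(z_j) - G_Y(z_j)\| \leq C_\sigma N^{5/2 + 2\sigma} \cdot 2^{-(\log N)^6}.
\end{equation*}

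The final step is a standard telescoping decomposition
\begin{equation*}
\prod_{k=1}^n \mathrm{Im}\, m_X(z_k) - \prod_{k=1}^n \mathrm{Im}\, m_Y(z_k) = \sum_{k=1}^n \Big(\prod_{j<k} \mathrm{Im}\, m_X(z_j)\Big)\big(\mathrm{Im}\, m_X(z_k) - \mathrm{Im}\, m_Y(z_k)\big)\Big(\prod_{j>k} \mathrm{Im}\, m_Y(z_j)\Big),
\end{equation*}
combined with the trivial factorwise bound $|\mathrm{Im}\, m_\bullet(z_j)| \leq \eta_j^{-1} \leq N^{1+\sigma}$ on the remaining $n-1$ factors. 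Each summand is thus at most $C_\sigma N^{(n-1)(1+\sigma) + 5/2 + 2\sigma} \cdot 2^{-(\log N)^6}$, and the super-polynomial factor annihilates the fixed polynomial growth in $N$, giving a bound much stronger than $C_\sigma/N$ for $N$ large enough.

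I do not foresee a significant obstacle: the lemma is a soft perturbative statement whose only subtlety is noticing that the $(\log N)^6$ digits preserved in the resampling were chosen precisely so that the entrywise deviation $2^{-(\log N)^6}$ beats every polynomial factor arising from resolvent bounds at scales below $N^{-1}$. A cruder window such as $C \log N$ digits would produce only polynomial smallness and would not suffice, whereas the $(\log N)^6$ window gives an arbitrary polynomial margin.
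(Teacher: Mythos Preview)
Your proposal is correct and follows essentially the same approach as the paper: resolvent identity, entrywise Lipschitz bound $|f(T^kx)-f(y_k)|\le\|f'\|_\infty 2^{-(\log N)^6}$, trivial resolvent bounds $\le N^{1+\sigma}$, and telescoping over the product. The only cosmetic difference is that you pass through the operator norm (via Frobenius) while the paper works entrywise, yielding a slightly different polynomial prefactor; both are instantly swallowed by the super-polynomial factor $2^{-(\log N)^6}$.
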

   
   \begin{proof}[Proof of Lemma \ref{lm:GFCL}]
Let $\sigma>0$. By the resolvent identity, we have that
\begin{equation}
    G_X(z) - G_Y(z) = G_Y(z) (X - Y) G_X(z).
\end{equation}
The entries of the matrix $(X - Y)$ are of the form $f(T^k x) -y_k$. For these, we have the bound,
\begin{equation}
    |f(T^k x) - f(y_k)| \le \| f' \|_{\infty} |T^k x -y_k| \le \| f' \|_{\infty} 2^{-(\log N)^6}.
\end{equation}
The outside factors are controlled by $|G_{X,ij}(z_k)|,|G_{Y,ij}(z_k)| \le \frac{1}{\textnormal{Im}[z_k]} \le N^{1+\sigma} $. After taking the trace, this shows
$$
|m_X(z_k)-m_Y(z_k)|\leq \| f' \|_{\infty} N^{4+2\sigma } 2^{-(\log N)^6}.
$$
Using that $|m_X(z_k)|,|m_Y(z_k)|\leq N^{1+\sigma}$, we conclude
$$
\l|\prod_{k=1}^n\mathrm{Im}\, m_X(z_k)-\prod_{k=1}^n\mathrm{Im}\, m_Y(z_k)\r|
\leq \| f' \|_{\infty} N^{(n-1)\sigma}  N^{4+2\sigma } 2^{-(\log N)^6}\leq \frac{C_\sigma}{N}
$$
since the logarithm comes with a power. 
\end{proof}

\subsection{Proof of the main result assuming Theorems \ref{thm:auxLocalLaw} and \ref{thm:auxmain}}
\be{proof}[Proof of the local law, Theorem \ref{thm:LocalLaw}]
This follows by combining Lemma \ref{lm:GFCL} and Theorem \ref{thm:auxLocalLaw}.
   \e{proof}
   
For Theorem \ref{thm:main}, we need another technical comparison result.

   \be{lm}[Correlation Function Comparison Lemma]
   \label{lm:CFCL}
Let $k\geq 1$ and let $O:\mathbb{R}^k \to \mathbb{R}$ be smooth and compactly supported.
Then, for $\eps>0$ and $\rho(E)\geq \eps$, there exists $\kappa>0$ such that
\begin{equation}
    \int_{\mathbb{R}^k} O(\bm{\alpha}) \l(p_{N}^{(k)}\l(E+\frac{\bm{\al}}{2N}\r) - \tilde p_N^{(k)}\l(E+\frac{\bm{\al}}{2N}\r) \r)\d^k\bm{\al}=\mathcal{O}(N^{-\kappa}),
\end{equation}
as $N\to\infty$.
\e{lm}
   
   \be{proof}
   The derivation of Lemma \ref{lm:CFCL} from Lemma \ref{lm:GFCL} and the local laws is standard for generalized Wigner matrices; see \cite[Thm 15.3]{bYau} and \cite[Thm 6.4]{Erdos2012}. In essence, the proof idea is to mollify the observable $O$ in each argument through approximate $\de$-functions of the form $\mathrm{Im}\,\frac{1}{x-E-i\eta}$ with $\eta\sim N^{-1-\sigma}$, which is a good approximation because these small scales resolve individual eigenvalues. This allows to express the tested correlation function using polynomials in $\mathrm{Im}\, m(z_1),\ldots, \mathrm{Im}\, m(z_k)$ to which one applies Lemma \ref{lm:GFCL}. An inclusion-exclusion argument takes care of the possibility of eigenvalues matching. The local laws for { $m_Y(z)$ and $m_X(z)$} play the role of a priori bound on the eigenvalue density in the form of \cite[eq.\ (15.7)]{bYau}.  
   
   Our contribution here is solely to observe that the arguments carry over verbatim to the ensemble defined by{ $H_Y$}. For the details, we refer the interested reader to \cite{bYau}[Thm 15.3] and \cite[Thm 6.4]{Erdos2012}.
   \e{proof}
   
      \be{proof}[Proof of the main result, Theorem \ref{thm:main}]
      This follows by combining Lemma \ref{lm:CFCL} and Theorem \ref{thm:auxmain}.
   \e{proof}

%
%

It remains to prove Theorems \ref{thm:auxLocalLaw} and \ref{thm:auxmain}. The main work is to establish Theorem \ref{thm:auxLocalLaw}, the rest is handled by the well-developed Dyson Brownian Motion machinery. From now on, we only consider the resampled matrices $Y$ and we denote $H\equiv H_Y$, $m\equiv m_Y$, { and $G\equiv G_Y$} .

\section{Derivation of the scalar self-consistent equation}\label{sec:self-consist}
This section is devoted to studying the self-consistent equation satisfied by the Stieltjes transform 
\beq\label{eq:mdefn}
m(z)=\frac{1}{2N}\mathrm{Tr}[(H-z)^{-1}].
\eeq
 We will see that it is essentially given by \eqref{eq:sc} up to error terms that are subleading in $N$.

The standard approach for matrices with independent entries is to remove a single row and column via the Schur complement formula. With dependencies, this is no longer sufficient. The solution found in \cite{Che2016} is to expand the rows and columns even further until sufficient decoupling is achieved. In this section, we adapt this approach to the dynamically defined ensemble.

\subsection{Preliminaries}
We introduce some notation.
Following \cite{Che2016}, we define the $2N^2\times 2N^2$ matrix $\Sigma$, called the \textit{full correlation matrix of $H$}, 
\beq\label{eq:Sigmadefn}
\Sigma_{(i,j),(k,l)}=\xi_{ijkl}=N\mathrm{Cov}\l[H_{ij},\overline{H_{kl}}\r], \qquad 1\leq i,j,k,l \leq 2N.
\eeq
One can immediately see that the matrix $\Sigma$ is not strictly positive semidefinite in the sense of \cite[Def. 2.2]{Che2016} since $\Sigma_{(i,j) (i,j)}=0$ for $1\le i,j\le N$. 

To quantify the error terms in the self-consistent equation, we need some notation.

\begin{defn}[Notation for index reduction]
Let $A$ be a matrix indexed by a subset of the integers.
\begin{enumerate}[label=(\roman*)]
    \item For $T$ and $S$ subsets of $\Z$, we write $A\vert_{T,S}$ for the submatrix of $A$ whose rows are given by $T$ and whose columns are given by $S$.
    \item For $U$ is a subset of $\Z$, we denote by $A^{(U)}$ the matrix
    $$
  A^{(U)}_{ij}= \left(A_{ij} \mathbbm{1}[i \in U] \mathbbm{1}[j \in U] \right).
$$
     This means that we set each entry of $A$ whose row or column index belongs to $U$ to 0 and do not alter the matrix $A$ otherwise. 
     \item For $U\subset \{1,\ldots,2N\}$, we also set $G^{(U)}(z) = (Y^{(U)} -z)^{-1}$. When $U$ is a single element $\{k \}$, then we write $G^{k}$ instead of { $G^{(\{k\})}$}.
\end{enumerate}

\end{defn}

\begin{defn}[Stochastic control parameters]\label{def:StCo}
In order to establish various error bounds, we would require the following two stochastic control parameters.
\begin{enumerate}[label=(\roman*)]
    \item We define \begin{equation*}
        \Gamma := 1 \vee\sup_{i,j \in \{1,\ldots,2N\}} |G_{ij}|.
    \end{equation*}
    \item Given $i\in\{1,\ldots,2N\}$, let $I$ and $J$ be disjoint subsets of 
    $$
    [ i- 2(\log N)^6, i + 2(\log N)^6]\cap\Z.
    $$
Let {$\| \cdot \|$} denote the operator norm on $\mathbb{C}^{|I|}$ with respect to the Euclidean norm. We define
\begin{equation*}
        \gamma:= 1 \vee \sup_{1\leq i\leq 2N} \sup_{I,J} \| {( G_{I,I}^{(J)})^{-1} } \|.
    \end{equation*}
\end{enumerate}
\end{defn}

\subsection{The self-consistent equation}
We first state the general form of the self-consistent equation satisfied by the Green's function.

\be{lm}[Self-consistent equation for the Green's function]\label{lm:scGF}
Let $z=E+i\eta$ and $\sigma>0$. Then
\begin{equation}\label{eq:lmscGF}
    -\frac{1}{N} \sum_{1\leq k,l,m\leq 2N}G_{ik} \xi_{kl jm} G_{l m} - z G_{ij} = \delta_{ij} + \mathcal{O}_{\sigma}\l(\frac{N^{2\sigma} \Gamma^5 \gamma^3}{\sqrt{N \eta}}\r).
\end{equation}

\e{lm}

This lemma is proved in the next section. The proof is similar to that of \cite[Lemma 3.10]{Che2016} due to the fact that the relevant concentration estimates for quadratic forms can be extended to a logarithmic range of dependence.

We are able to leverage the special structural properties of the dynamically defined matrix model to reduce to a scalar self-consistent equation for the Stieltjes transform $m(z)$.

The relevant finite-$N$ analog of ${\Phi}$ from \eqref{eq:Phidefn} is the banded $N\times N$ Toeplitz matrix $\Phi^N$ defined as follows, for any $1\leq i,j\leq N$,
\beq\label{eq:PhiNdefn}
\Phi^N_{i,j}=
\begin{cases}
\phi(i-j),\qquad &\textnormal{if } |i-j|\leq (\log N)^6,\\
0,\qquad &\textnormal{if } |i-j|\geq (\log N)^6+1.
\end{cases}
\eeq

We note that $\Phi^N$ is Hermitian because the function $\phi$ satisfies $\phi(j)=\oline{\phi(-j)}$.

{{  With the identification of the most-important parts of the covariance, we can simplify the self-consistent equation from \ref{lm:scGF} to have a nice block structure.
\be{cor}
Let $G^1= G|_{[1,N] \times[1,N]}$, $G^2=G|_{[N+1,2N] \times[N+1,2N]}$, $G^3= G|_{[1,N] \times[N+1,2N]}$, and $G^4= G|_{[N+1,2N] \times[1,N]}$.  We have the following equations on the blocks of $G^i$
\begin{equation} \label{eq:equationset}
\begin{aligned}
	&G^1 \left( -\frac{1}{N} \text{Tr}[G^2 \Phi^N]] -z\right) = \mathbbm{1} + E_1= \mathbbm{1} + \mathcal{O}_{\sigma}\l(\frac{N^{2\sigma} \Gamma^5 \gamma^3}{\sqrt{N \eta}}\r)\\
	& G^2\left(\frac{1}{N}\text{Tr}[G^1] \Phi^N - z\right) = \mathbbm{1} +E_2= \mathbbm{1} + \mathcal{O}_{\sigma}\l(\frac{N^{2\sigma} \Gamma^5 \gamma^3}{\sqrt{N \eta}}\r)\\
	& G^3\left(- \Phi^N \frac{1}{N}\text{Tr}[G^1] -z \right) =  E_3=\mathcal{O}_{\sigma}\l(\frac{N^{2\sigma} \Gamma^5 \gamma^3}{\sqrt{N \eta}}\r)\\
	& G^4\left( -\frac{1}{N} \text{Tr}[G^2 \Phi^N]] -z\right) = E_4= \mathcal{O}_{\sigma}\l(\frac{N^{2\sigma} \Gamma^5 \gamma^3}{\sqrt{N \eta}}\r),
\end{aligned}
\end{equation}
where the $\mathcal{O}_{\sigma}$ errors are elementwise.

By setting these errors to zero, we can derive the limiting matrix self-consistent equation.

If we let $\Xi(\mathcal{M})$ be the operator on $2N$ by $2N$ matrices that sends 
\begin{equation} \label{eq:defXi}
	\Xi(\mathcal{M})= \frac{1}{N} \sum_{\substack{(k,l,j,m) = (a,b+N,c,d+N) \\ \text{ or} (a+N,b,c+N,d) \\ 1 \le a,b,c,d \le N}} \mathcal{M}_{ik} \xi_{kljm} \mathcal{M}_{lm},
\end{equation}
Intuitively, we see that $G$ should approach the solution of the limiting self-consistent equation,
\begin{equation} \label{eq:firsteq}
 \mathcal{M} = (-z - \Xi(\mathcal{M}))^{-1}.
\end{equation}
\e{cor}

}
}

{ Unfortunately, this matrix self-consistent equation is still not in a good enough form for detailed analysis.  However, we notice vast simplications from the equations in \ref{eq:equationset}. The quations for $G^1$ and $G^2$ are closed and this is all that is necessary to determine the behavior of the trace of the Green's function. Furthermore, we would also expect $G^3$ ,$G^4$ and the off digaonal entries of $G^1$ to concentrate around zero. With this simplications, we can present a scalar self-consistent equation. This form of the self-consistent equation is the most important for deriving stability and error propagation bounds in section \ref{sec:analysis}.} 

We write $m_{\Phi^N}$ for the Stieltjes transform of the (no longer explicit) spectral measure of $\Phi^N$. Recall { the definition of $m(z)$ from Equation }\eqref{eq:mdefn}.  
\be{prop}[Scalar self-consistent equation for $m(z)$]\label{prop:sc}
We have
\beq\label{eq:propsc}
   -\frac{1}{m(z)} m_{\Phi^N}\l(-\frac{z}{m(z)}\r) =  m(z) + \mathcal{O}_{\sigma}\l(\frac{N^{2\sigma} \Gamma^6 \gamma^3}{\sqrt{N \eta}}\r).
   \eeq
\e{prop}

\be{rmk}
Formally taking $N\to\infty$, we see that we can expect { the above equation to approximate the limiting self-consistent equation}  \eqref{eq:sc} .
\e{rmk}

\subsection{Proof of Proposition \ref{prop:sc}}
We recall the Ward identity (or rather Ward identities since they helpfully decompose into two relations for sample covariance matrices such as $H$). These give us control on the sizes of the off-diagonal Green's function elements in terms of its diagonal elements. 

\begin{lm}[Ward Identities] \label{lem:Ward}
The Green's function satisfies
\begin{equation}
\begin{aligned}
    &\sum_{l=1}^N |G_{i,l}|^2 = \frac{1}{2} \frac{\textnormal{Im}[G_{ii}]}{\eta}\\
    & \sum_{l=1}^N |G_{i, l+N}|^2 = \frac{1}{2} \frac{\textnormal{Im}[G_{ii}]}{\eta}.
\end{aligned}
\end{equation}
\e{lm}

Here and in the following use both $A_{i,j}$ or $A_{ij}$ for entries of a matrix $A$, depending on convenience.

\be{proof}[Proof of Proposition \ref{prop:sc}]
\textit{Step 1.} Correlation matrix. Let $1\leq i,j,k,l\leq N$. Using \eqref{eq:ykdefn} and conservation of measure of the doubling map, we find for the correlation matrix
\beq\label{eq:Sigmaentries}
\begin{aligned}
\Sigma_{(i,j+N),(k,l+N)}
=&\mathbb E\l[f(y_{(2N-1)i+j}) \overline{f(y_{(2N-1)k+l})}\r]
=\de_{i,k}\Phi^N_{l,j},\\
\Sigma_{(i+N,j),(k+N,l)}
=&\mathbb E\l[f(y_{(2N-1)l+k)} \overline{f(y_{(2N-1)j+i})}\r]
=\de_{j,l}\Phi^N_{i,k},\\
\Sigma_{(i,j+N),(k+N,l)}
=&\mathbb E\l[f(y_{(2N-1)i+j)} f(y_{(2N-1)l+k})\r]
=\de_{i,l}\Psi^N_{k,j},\\
\Sigma_{(i+N,j),(k,l+N)}
=&\mathbb E\l[{ \overline{f(y_{(2N-1)j+i})} \overline{ f(y_{(2N-1)k +l})}}\r]
=\de_{j,k}{\overline{\Psi^N_{i,l}}},
\end{aligned}
\eeq
where $\Phi^N$ is given by \eqref{eq:PhiNdefn} and we introduced its sibling 
\beq\label{eq:PsiNdefn}
\Psi^N_{i,j}=
\begin{cases}
\psi(i-j),\qquad &\textnormal{if } |i-j|\leq (\log N)^6,\\
0,\qquad &\textnormal{if } |i-j|\geq (\log N)^6+1.
\end{cases}
\eeq
with
$$
\psi(j)=\mathbb{E}\l[f(x)f(T^j x)\r]= \sum_{k=1}^{\infty} c_k c_{k 2^j},\qquad j\in\Z.
$$
We note that \eqref{eq:Sigmaentries} contains all the only non-trivial entries of $\Sigma$, i.e., all others are zero. The error terms $\mathcal{O}(2^{-(\log N)^6})$ come from \eqref{eq:ykdefn} and the mean-value theorem; they are super-polynomially decaying and can thus be ignored in the following.

\textit{Step 2.} Reduction to blocks. We introduce the block Green's functions
$$
G^1 = G|_{\{1,\ldots,N\}\times \{1,\ldots,N \}} ,\qquad 
G^2= G|_{\{N+1,\ldots,2N\} \times \{N+1,\ldots, 2N\}}.
$$
and their normalized traces
$$
T_\al(z)=\frac{1}{N}\mathrm{Tr}[G^\al(z)],\qquad \al=1,2.
$$
The Schur complement formula and cyclicity readily imply that $T_1(z)=T_2(z)$ and consequently 
\beq\label{eq:mT1T2decompose}
m(z)=\frac{1}{2N}\mathrm{Tr}[G(z)]=\frac{T_1(z)+T_2(z)}{2}=T_1(z).
\eeq
We see that it suffices to study the reduced Stieltjes transform $T_1(z)$.

We decompose the self-consistent equation \eqref{eq:lmscGF} into blocks accordingly, taking into account when the correlation matrix vanishes. We first let $i,j\leq N$. Rearranging terms, we obtain
\begin{equation}\label{eq:scGF1}
\begin{aligned}
    &-\frac{1}{N} \sum_{k,l,m=1 }^NG^1_{ik} \xi_{k, l+N, j,m+N} G^2_{lm} - z G^1_{ij}\\ 
    =& \delta_{ij}+\frac{1}{N} \sum_{k,l,m=1 }^NG_{i,k+N} \xi_{k+N, l,j, m+N} G_{l,m+N} + \mathcal{O}_{\sigma}\l(\frac{N^{2\sigma} \Gamma^5 \gamma^3}{\sqrt{N \eta}}\r)\\
=& \delta_{ij}+ \mathcal{O}_{\sigma}\l(\frac{N^{2\sigma} \Gamma^5 \gamma^3}{\sqrt{N \eta}}\r)\\
\end{aligned}
\end{equation}
For the second step, we used the formula from Step 1, Cauchy-Schwarz, the Ward identity, and { $\|\psi\|_{\ell^2}<\infty$ }(proved below) to estimate
$$
\begin{aligned}
\frac{1}{N}\l| \sum_{k,l,m=1 }^NG_{i,k+N} \xi_{k+N, l, j,m+N} G_{l,m+N} \r|
\leq &\frac{1}{N} \sum_{k,m=1 }^N  |{ \overline{\psi(k-m)}} G_{i,k+N}G_{j,m+N}| \\
\leq& \frac{\Gamma}{2 { N}\eta} \|\psi\|_{\ell^2}
=\mathcal{O}_{\sigma}\l(\frac{N^{2\sigma} \Gamma^5 \gamma^3}{\sqrt{N \eta}}\r),
\end{aligned}
$$
To see that { $\|\psi\|_{\ell^2}<\infty$}, note that $\psi$ is the Fourier transform of
$$
\tilde g_f(x)= \sum_{\substack{n\geq 0:\\ 2 \nmid n}} \left( \sum_{k=0}^{\infty} c_{n 2^k } \exp[2\pi \textnormal{i} k x]  \right)^2, 
$$
which is dominated by $g_f$. Hence, we have ${\|\psi\|_{\ell^2}}\leq {\|\tilde{g}_f\|_{L^2([0,1])}}\leq \|g_f\|_{L^\infty([0,1])}\leq C\|f\|_{H^1(\R)}$.

Using the formula from Step 1 to rewrite the left-hand side of \eqref{eq:scGF1}, we conclude that
\beq\label{eq:<block}
 -\frac{1}{N} G^1_{ij}\sum_{l,m=1 }^N \Phi^N_{m,l} G^2_{lm} - z G^1_{ij}
 =\delta_{ij}+ \mathcal{O}_{\sigma}\l(\frac{N^{2\sigma} \Gamma^5 \gamma^3}{\sqrt{N \eta}}\r)
\eeq
For $i,j>N$ we have to consider the expression
$$
\begin{aligned}
     -\frac{1}{N} \sum_{k,l,m=1}^N G^2_{ik} \xi_{k+N,l,j+N,m} G^1_{lm} - z G^2_{ij} =& \delta_{ij}+ \mathcal{O}_{\sigma}\l(\frac{N^{2\sigma} \Gamma^5 \gamma^3}{\sqrt{N \eta}}\r).
\end{aligned}
$$

Completely analogous reasoning yields the remarkably different formula
\beq\label{eq:>block}
\begin{aligned}
     -T_1(z) \sum_{k=1}^N G^2_{ik}\Phi^N_{k,j} - z G^2_{ij} =& \delta_{ij}+ \mathcal{O}_{\sigma}\l(\frac{N^{2\sigma} \Gamma^5 \gamma^3}{\sqrt{N \eta}}\r).
\end{aligned}
\eeq
where we used that $\tfrac{1}{N}\sum_l G_{ll}^2=T_2(z)=T_1(z)$.

Inspecting the system of equations \eqref{eq:<block} and \eqref{eq:>block}, we notice that we can reduce $G^1$ to $T_1(z)$ also from \eqref{eq:<block} by averaging over $i=j$. By contrast, the elements of $G^2$ are multiplied by the Hermitian Toeplitz matrix $\Phi^N$. We obtain the system
\begin{equation}
\begin{aligned}
     T_1(z) \l(-\frac{1}{N}\mathrm{Tr}[G^2\Phi^N ] -z\r) =& 1+ \mathcal{O}_{\sigma}\l(\frac{N^{2\sigma} \Gamma^5 \gamma^3}{\sqrt{N \eta}}\r)\\
     - T_1(z)G^2 \Phi^N - z G^2 =& \mathbbm 1 + \mathcal{O}_{\sigma}\l(\frac{N^{2\sigma} \Gamma^5 \gamma^3}{\sqrt{N \eta}}\r) 
\end{aligned}
\end{equation}
where in the second line the error estimate holds componentwise. We can use the second equation to solve for $G^2$ as
$$
G^2=\frac{1}{-T_1 \Phi^N-z} \l(1+ \mathcal{O}_{\sigma}\l(\frac{N^{2\sigma} \Gamma^5 \gamma^3}{\sqrt{N \eta}}\r) \r)
$$
It remains to understand how the matrix $\frac{1}{-T_1 \Phi^N-z}$ affects the error estimate. This is relegated to the next subsection; see Proposition \ref{prop:errorprop}. Together with \eqref{eq:mT1T2decompose}, this proves Proposition \ref{prop:sc}.
\e{proof}

\subsection{Propagation of Error Bounds}
%

We set
\begin{equation}
    \|M\|_{\infty}:= \sup_{i,j}|M_{i,j}|.
\end{equation}
\begin{prop} \label{prop:errorprop}
Consider a solution to the system of equations 
\begin{equation} \label{eq:SystErr}
\begin{aligned}
    &{T_1(z) \left( -\frac{1}{N} \mathrm{Tr}(G^2(z) \Phi^N) -z \right)= 1 + e_1}\\
    & {- G^2(z) \Phi T_1(z) - z G^2(z) = I + E_2}, 
\end{aligned}    
\end{equation}
where $e_1$ is a number and $E_2$ is a matrix. Then
\begin{equation}
    {T_1(z) = -\frac{1}{T_1(z)} m_{\Phi}\l(-\frac{z}{T_1(z)}\r) + \tilde{e}_1},
\end{equation}
where $\tilde{e}_1$ satisfies,
\begin{equation}
    |\tilde{e}_1| \le \frac{1}{|z|}\l( |e_1| + (\log N)^6 \|E_2\|_{\infty} \r).
\end{equation}
\end{prop}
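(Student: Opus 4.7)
The plan is to solve the matrix equation (the second of \eqref{eq:SystErr}) for $G^2$ by inversion, then substitute the result into the scalar equation (the first of \eqref{eq:SystErr}) to isolate a closed equation for $T_1$. Set $R := (T_1 \Phi^N + zI)^{-1}$, which exists on the relevant spectral domain by the admissibility hypothesis. Then the second equation reads $G^2 = -(I + E_2)R$. The crucial algebraic identity, obtained by multiplying $R(T_1\Phi^N + zI) = I$ on the right by $R$ and rearranging, is $R\Phi^N = \tfrac{1}{T_1}(I - zR)$. Applying it to $G^2 \Phi^N = -(I+E_2)R\Phi^N$, taking the normalized trace, and using $\tfrac{1}{N}\mathrm{Tr}(R) = \tfrac{1}{T_1}m_{\Phi^N}(-z/T_1)$ (which follows from $R = \tfrac{1}{T_1}(\Phi^N + (z/T_1)I)^{-1}$), yields a closed expression for $\tfrac{1}{N}\mathrm{Tr}(G^2\Phi^N)$ with an explicit $E_2$-dependent remainder. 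Substituting into the first equation of \eqref{eq:SystErr} and dividing by $-z$ produces the scalar self-consistent equation with remainder
\[ \tilde e_1 = -\frac{e_1}{z} + \frac{1}{zN}\mathrm{Tr}(E_2) - \frac{1}{N}\mathrm{Tr}(E_2 R). \]

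The first two contributions are bounded directly: $|e_1/z|\le |e_1|/|z|$ and $|\mathrm{Tr}(E_2)|/(|z|N) \le \|E_2\|_\infty/|z|$. The content of the proposition is thus to control $\tfrac{1}{N}|\mathrm{Tr}(E_2 R)|$ by $C(\log N)^6 \|E_2\|_\infty/|z|$. From the trivial entrywise estimate
\[ \frac{1}{N}|\mathrm{Tr}(E_2 R)| \le \|E_2\|_\infty \, \|R\|_{\ell^\infty \to \ell^\infty}, \]
it suffices to establish the row-sum bound $\|R\|_{\ell^\infty \to \ell^\infty} \le C(\log N)^6/|z|$. The key structural input is that $\Phi^N$ is banded with bandwidth $(\log N)^6$ and that the admissibility hypothesis gives a uniform spectral gap for $\Phi^N$ separating it from $-z/T_1$. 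Via a Demko--Moss--Smith-type exponential off-diagonal decay estimate for inverses of banded matrices with bounded condition number, one obtains $|R_{ij}| \le C|z|^{-1} q^{|i-j|/(\log N)^6}$ for some $q \in (0,1)$ depending only on the spectral gap; summing the resulting geometric series in $j$ yields the claimed row-sum bound.

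The hard part is precisely this last step. Crude Cauchy--Schwarz estimates of the form $|\mathrm{Tr}(E_2 R)| \le \|E_2\|_{HS}\|R\|_{HS}$ lose an unacceptable factor of $\sqrt{N}$, so one genuinely must exploit the quantitative off-diagonal decay of the inverse of a banded Hermitian matrix rather than generic resolvent estimates. This is where the logarithmic range of dependence engineered by the resampling procedure of Section~\ref{sec:Step1} directly pays off: it converts into only a polylogarithmic loss $(\log N)^6$ in the propagation of error from the matrix self-consistent equation to the scalar one.
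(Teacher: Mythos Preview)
Your approach is essentially the paper's: solve the second equation for $G^2=-(I+E_2)R$ with $R=(T_1\Phi^N+z)^{-1}$, substitute into the first, and control the resulting trace of $E_2$ against the inverse of a banded matrix via a Demko--Moss--Smith type off-diagonal decay estimate (this is exactly the paper's Lemma~\ref{lem:BandInv}). The one slip is your claimed prefactor $|z|^{-1}$ in $|R_{ij}|\le C|z|^{-1}q^{|i-j|/(\log N)^6}$: the banded-inverse decay bound gives a prefactor governed by $\|R\|$ (equivalently by the condition number, hence by $\mathrm{Im}[T_1]\ge\omega$), not by $1/|z|$, so your term $\tfrac{1}{N}\mathrm{Tr}(E_2R)$ is bounded by $C_\omega(\log N)^a\|E_2\|_\infty$ rather than $C(\log N)^a\|E_2\|_\infty/|z|$. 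The paper obtains the explicit $1/z$ differently, by \emph{not} applying your identity $R\Phi^N=\tfrac{1}{T_1}(I-zR)$ and instead keeping the error as $\tfrac{T_1}{zN}\mathrm{Tr}(E_2R\Phi^N)$, then bounding $\|\Phi^N E_2\|_\infty\le C(\log N)^6\|E_2\|_\infty$ and the entries of $R$ by the same banded-inverse lemma; in the relevant regime of bounded $|z|$ the two routes give equivalent bounds (and in fact your rewriting saves a factor of $(\log N)^6$ relative to the paper's $(\log N)^{18}$).
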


The proof uses spectral bounds on the finite Toeplitz matrix $\Phi^N$. These are inherited from the infinite Toeplitz matrix $\Phi$ as summarized in the following lemma.

\begin{lm}[Spectral estimates for finite banded Toeplitz matrix]\label{lm:PhiNest}
There exist $N$-independent constants $c,C>0$ such that
\beq\label{eq:PhiNspecbounds}
c\leq \Phi^N \leq C
\eeq
\end{lm}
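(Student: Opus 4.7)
The plan is to compare $\Phi^N$ with the untruncated principal submatrix $T_N(\phi)$ of the infinite Toeplitz matrix $\Phi$, defined by $(T_N(\phi))_{i,j}=\phi(i-j)$ for all $1\le i,j \le N$, and then to leverage the Fourier-symbol characterization of $\mathrm{spec}(\Phi)$ that is already implicit in \eqref{eq:gfrewrite}.

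First I would invoke the standard fact that $\Phi$ is unitarily equivalent, via the Fourier transform on $\ell^2(\Z)$, to multiplication by the symbol $g_f$ on $L^2(\T)$. Since $f\in C^2(\T)$ forces $\|c\|_{\ell^1}<\infty$, the function $g_f$ is continuous, so $\mathrm{spec}(\Phi)=\mathrm{range}(g_f)$. Combined with admissibility this gives the two-sided operator bound $g_{\min}\, I \le \Phi \le \|g_f\|_{\infty}\, I$ on $\ell^2(\Z)$. Compressing by the coordinate projection onto $\{1,\ldots,N\}$ preserves one-sided operator inequalities, so we inherit
\[
g_{\min}\,I_N \;\le\; T_N(\phi) \;\le\; \|g_f\|_{\infty}\,I_N.
\]

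Second I would control the banded perturbation $E_N := T_N(\phi)-\Phi^N$, whose entries are $\phi(i-j)$ for $|i-j|>(\log N)^6$ and zero otherwise. Using $f\in C^2$, the Fourier coefficients satisfy $|c_k|\le C k^{-2}$, and inserting this into the defining series $\phi(j)=\sum_{k\ge 1}\overline{c_k}\,c_{k\cdot 2^{|j|}}$ from \eqref{eq:phidefn} yields the dyadic decay
\[
|\phi(j)| \;\le\; \sum_{k\ge 1}\frac{C^{2}}{k^{2}\,(k\cdot 2^{|j|})^{2}} \;\le\; C'\, 4^{-|j|}.
\]
Gerschgorin (or Schur's test applied to the Hermitian matrix $E_N$) then gives $\|E_N\|\le \sup_i \sum_{j:\,|i-j|>(\log N)^6} |\phi(i-j)| \le C''\cdot 4^{-(\log N)^6}$, which decays faster than any inverse polynomial in $N$.

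Combining the two ingredients, for all sufficiently large $N$ we conclude $\tfrac12 g_{\min}\,I_N \le \Phi^N \le 2\|g_f\|_{\infty}\,I_N$, which proves the claim with $c=g_{\min}/2$ and $C=2\|g_f\|_{\infty}$ (adjusting constants trivially to absorb any finite collection of small $N$). The only mildly delicate step is securing decay of $\phi(j)$ faster than any inverse polynomial at distance $|j|=(\log N)^6$; the elementary $4^{-|j|}$ bound handles this with plenty of room, and this is the single place where the $C^2$ smoothness of $f$ (as opposed to just admissibility) is used.
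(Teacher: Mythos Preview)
Your proof is correct and follows essentially the same route as the paper: compare $\Phi^N$ to the untruncated principal submatrix $A^N=T_N(\phi)$, bound $\mathrm{spec}(A^N)$ via the Fourier symbol $g_f$ (the paper cites Gray's Toeplitz monograph for the compression fact you prove directly), and then control $\|A^N-\Phi^N\|$ using the smoothness of $f$. Your explicit $|\phi(j)|\le C'4^{-|j|}$ estimate is in fact sharper and more transparent than the paper's somewhat vague appeal to ``$f\in C^2$ and the mean-value theorem'' for $\|A^N-\Phi^N\|=o(1)$.
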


\be{proof}[Proof of Lemma \ref{lm:PhiNest}]
Recall that $g_f$ is the Fourier symbol of the infinite Toeplitz matrix $\Phi$ and so
$$
\inf\mathrm{spec}\,\Phi =\inf_{x\in [0,1]}g_f(x)=g_{\min}>0,\qquad \sup\mathrm{spec}\,\Phi =\sup_{x\in [0,1]}g_f(x)\leq C \|f\|_{H^1}^2.
$$
It is a well-known fact that the finite restriction of an infinite Toeplitz matrix has spectrum lying inside the spectrum determined by the Fourier symbol of the infinite matrix \cite[Lemma 6]{GrayToeplitz}. In the present case, this implies that if we define the auxiliary Toeplitz matrix
\beq\label{eq:ANdefn}
A^N_{i,j}=
\phi(i-j),\qquad 1\leq i,j\leq N
\eeq
then
$$
\inf\mathrm{spec}\,A^N\geq \inf_{x\in [0,1]}g_f(x)=g_{\min}>0,\qquad \sup\mathrm{spec}\,A^N \leq \sup_{x\in [0,1]}g_f(x)\leq C \|f\|_{H^1}^2.
$$
Recall that the matrix $\Phi^N$ defined by \eqref{eq:PhiNdefn} is cut off at the band edges $|i-j|=(\log N)^6$. Using Definition \eqref{eq:phidefn} of $\phi$, the fact that $f\in C^2(\mathbb T)$ and the mean-value theorem, we see that 
$$
\|A^N-\Phi^N\|=o(1),\qquad \textnormal{as }N\to\infty
$$
and so Lemma \ref{lm:PhiNest} follows from Weyl's norm perturbation theorem.
\e{proof}

\begin{proof}[Proof of Proposition \ref{prop:errorprop}]
We abbreviate $T_1\equiv T_1(z)$. Solving the second identity for $G^2$ yields
$G^2= -(I + E_2)( \Phi T_1 +z)^{-1}$ and then the first identity gives
\begin{equation}
    T_1\left( \frac{1}{N} \mathrm{Tr}(\Phi^N(\Phi^N T_1 +z)^{-1}) -z\right) = 1+ e_1 - \frac{1}{N} \mathrm{Tr}( E_2 (\Phi^N T_1 +z)^{-1} \Phi^N).
\end{equation}

After some algebraic manipulation, we derive
\begin{equation}
    T_1 = - \frac{1}{T_1} m_{\Phi^N}\l(-\frac{z}{T_1}\r) +\frac{1}{z}\l(-e_1 +{\frac{1}{N} \mathrm{Tr}(E_2(\Phi^N T_1 +z)^{-1} \Phi^N})\r)
\end{equation}

Our goal is now to  control the second error term $\frac{1}{N} \mathrm{Tr}(E_2 (\Phi^N T_1 + z)^{-1} \Phi^N)$. By the cyclic property of the trace, we know that 
$$
\mathrm{Tr}(E_2(\Phi^N T_1 + z)^{-1} \Phi^N) ={ \mathrm{Tr}( \Phi^N E_2 (\Phi^N T_1 + z)^{-1} )}.
$$ Now, since $\Phi^N$ is a band matrix of band size $2(\log N)^6$ and only bounded entries, we see that $\| \Phi^N 
E_2 \|_{\infty} \le C(\log N)^6 \| E_2 \|_{\infty}. $
To estimate the entries of $(\Phi^N T_1 +z)^{-1}$, we need to use the fact that we are considering the inverse of a banded matrix. We apply the following result on the decay of entries of the inverse of banded matrices from \cite{Che2016}.

\begin{lm}[Inversion of band matrices \cite{Che2016}] \label{lem:BandInv}
Let $A$ be an invertible infinite or finite matrix, which is $W$-banded in the sense that $A(i,j)=0$ if $|i-j|\ge W$. We have 

$$|(A^{-1})_{i,j}| \le  2(2W+1) \kappa(A) \alpha^{(|i-j| -W)_{+}} ,$$
where 
$$
\kappa(A)= \|A \|\cdot \| A^{-1} \|,\qquad \alpha = \left(\frac{\kappa(A)-1}{\kappa(A) +1} \right)^{2/(2W+1)}.
$$

\end{lm}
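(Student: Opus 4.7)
The plan is to follow the classical Demko--Moss--Smith strategy: approximate $A^{-1}$ by a low-degree polynomial in $A^*A$ whose entries automatically vanish far from the diagonal because $A$ is banded, and then control the scalar approximation error via Chebyshev polynomials applied to $1/x$ on an interval containing $\mathrm{spec}(A^*A)$.

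First I would reduce to a self-adjoint positive-definite problem by passing from $A$ to $B := A^*A$, which is Hermitian with spectrum contained in $[c,C]$ for $c = \|A^{-1}\|^{-2}$ and $C = \|A\|^2$, so that $\kappa(B) = \kappa(A)^2$. The $W$-banded hypothesis on $A$ translates into a controlled banded structure for polynomial expressions: $B$ is $(2W-1)$-banded, any polynomial $p_n(B)$ of degree $n$ has bandwidth $O(nW)$, and therefore $A^* p_n(B)$ has bandwidth at most $(2n+1)W$ after accounting for the extra factor of $A^*$. In particular, $(A^* p_n(B))_{ij} = 0$ whenever $|i-j| > (2n+1)W$.

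The analytic core is the sharp Chebyshev estimate: for every $n\geq 0$ there exists a polynomial $p_n$ of degree $n$ such that $\sup_{x\in[c,C]} |x^{-1} - p_n(x)| \leq \frac{2}{c} q^{n+1}$ with $q = (\sqrt{C/c}-1)/(\sqrt{C/c}+1) = (\kappa(A)-1)/(\kappa(A)+1)$. This is obtained by pulling back the extremal Chebyshev polynomial of the first kind from $[-1,1]$ to $[c,C]$ via an affine change of variables. Writing $A^{-1} = A^* B^{-1}$ and applying the spectral theorem to the Hermitian operator $B$ gives
\[
\|A^{-1} - A^* p_n(B)\| = \|A^*(B^{-1} - p_n(B))\| \leq \|A\| \sup_{x\in\mathrm{spec}(B)} |x^{-1}-p_n(x)| \leq \tfrac{2\|A\|}{c} q^{n+1} = 2\kappa(A)\|A^{-1}\|\, q^{n+1},
\]
and since individual matrix entries are dominated by the operator norm, I conclude $|(A^{-1})_{ij}| \leq 2\kappa(A)\|A^{-1}\|\, q^{n+1}$ whenever $|i-j| > (2n+1)W$.

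To finish I would optimize in $n$: taking the largest integer $n$ with $(2n+1)W \leq |i-j|$ yields $n+1 \geq (|i-j|-W)/(2W+1)$, hence $q^{n+1} \leq \alpha^{(|i-j|-W)_+}$ with $\alpha = q^{2/(2W+1)}$, precisely reproducing the geometric rate stated in the lemma. The trivial bound $|(A^{-1})_{ij}| \leq \|A^{-1}\|$ covers the short-range regime $|i-j|\leq W$, and the mildly wasteful prefactor $2(2W+1)\kappa(A)$ in the statement absorbs both the integer-rounding loss in the optimization and the conversion between $\|A^{-1}\|$ and $\kappa(A)$ under the normalization adopted in \cite{Che2016}. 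The main (and essentially only) obstacle is the sharp scalar Chebyshev approximation, which is where both the geometric ratio $(\kappa-1)/(\kappa+1)$ and the characteristic exponent $2/(2W+1)$ are generated; the spectral-theoretic reduction and the degree-to-bandwidth bookkeeping are mechanical once the convention is fixed.
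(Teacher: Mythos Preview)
The paper does not prove this lemma: it is quoted verbatim from \cite{Che2016} and used as a black box, so there is no in-paper argument to compare against. Your Demko--Moss--Smith strategy (pass to $B=A^*A$, approximate $1/x$ on $[\|A^{-1}\|^{-2},\|A\|^2]$ by Chebyshev polynomials, exploit that $A^*p_n(B)$ is banded) is exactly the classical route to results of this type, and structurally the argument is correct and complete.

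One quantitative point deserves a flag. Your optimization yields $q^{n+1}$ with $n+1\gtrsim (|i-j|-W)/(2W)$, hence a bound of order $q^{(|i-j|-W)/(2W+1)}$ after the rounding relaxation. The stated $\alpha=q^{2/(2W+1)}$ however demands $q^{2(|i-j|-W)/(2W+1)}$, i.e.\ twice the exponent. This factor of $2$ is not absorbed by the prefactor $2(2W+1)\kappa(A)$; it is a genuine gap between what your argument produces and the exact constants recorded in the lemma. The discrepancy is harmless for every application in the present paper---only the summability $\sum_k \alpha^{|k|}=O((\log N)^6)$ is used, and that holds equally well with exponent $1/(2W+1)$---but if you want to match the statement as written you would need either to sharpen the bandwidth bookkeeping for $A^*p_n(B)$ or to check whether the formulation in \cite{Che2016} carries a slightly different normalization.
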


We observe that $\kappa(T_1 \Phi^N +z)$ as defined in Lemma \ref{lem:BandInv}  is uniformly bounded independenly of $N$. 
Indeed, by Lemma \ref{lm:PhiNest}, we have 
$$
\| T_1 \Phi^N + z \| \ge c \omega
$$
because $\textnormal{Im}\langle v, (T_1 \Phi^N +z) v \rangle = \textnormal{Im}[T_1] \langle v, \Phi^N v \rangle + \textnormal{Im}[z] \langle v, v \rangle \ge (\textnormal{Im}[T_1] c + \textnormal{Im}[z]) \langle v , v \rangle.$ Similar arguments show that $\| T_1 \Phi^N +z \|$ is bounded from above.

The band width that we have to consider is $W=(\log N)^6$. Thus, we see that the constant $\alpha$ is given by,
\begin{equation}
    \alpha=\left(\frac{\kappa(T_1 \Phi^N +z)-1}{\kappa(T_1 \Phi^N +z)+1} \right)^{2/(2(\log N)^6+1)} = 1+ \mathcal{O}((\log N)^{-6}).
\end{equation}
We can then control individual entries of the matrix $\Phi^N E_2  (\Phi^N T_1 +z)^{-1}$, namely
\beq
\begin{aligned}
    \| \Phi^N E_2  (\Phi^N T_1 +z)^{-1}\|_{\infty} &\le 2(2(\log N)^6+1) \kappa(T_1 \Phi^N +z) \sum_{k=-N}^N \| \Phi^N E_2\|_{\infty} \alpha^{(|k| -(\log N)^6)_+}\\
    & \le C (\log N)^6 \| \Phi^N E_2\|_{\infty}\l((\log N)^6 + \frac{1}{1-\alpha}\r)\\
    & \le C{(\log N)^{18} }\| E_2\|_{\infty}.
\end{aligned}
\end{equation}
The bound on individual entries implies a bound on the normalized trace. This proves Proposition \ref{prop:errorprop}.
\end{proof}

\section{Derivation of the matrix self-consistent equation}\label{sec:matrissc}
In this section, we prove Lemma \ref{lm:scGF}. The following derivation is modeled after \cite{Che2016}. While we simplify some aspects, it is rather technical. Hence, we invite the reader to instead consider Appendix \ref{app:heuristic} for a heuristic derivation of  \eqref{eq:lmscGF} based on Gaussian integration by parts and the loop equation.

\subsection{Basic removal operations and bounds}
The following lemma can be seen as a generalization of the Schur complement formula. The proof is via the resolvent equation and can be found in \cite[Lemma 3.3]{Che2016}.

\begin{lm}\label{Lm:Resol}
Let $T,I, J$ be three subsets of $\{1,\ldots,2N\} $ satisfying $T \cap I, T \cap J = \emptyset$. Then, we have the following resolvent identities.
\begin{equation} \label{eq:Resolone}
    G_{I,J} = G^{(T)}_{I,J} + G_{I,T}(G_{T,T})^{-1} G_{T,J},
\end{equation}
\begin{equation} \label{eq:Resoltwo}
     G_{T,J} = - G_{T,T} H_{T, T^c}G^{(T)}_{T^c,J}.
\end{equation}

\end{lm}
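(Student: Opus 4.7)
The plan is to derive both identities from the standard resolvent identity combined with the block-diagonal structure of $H^{(T)}$. Set $B := H - H^{(T)}$. By the definition of $A^{(T)}$, the matrix $B$ is supported on those entries $(k,l)$ for which $k \in T$ or $l \in T$, and on those entries $B_{kl} = H_{kl}$. Moreover $H^{(T)}$ is itself block diagonal with respect to $\{1,\dots,2N\} = T \cup T^c$: the $T \times T$ block of $H^{(T)} - z$ equals $-zI$, while the $T^c \times T^c$ block equals $H|_{T^c \times T^c} - z$. Inverting block by block yields the crucial facts $G^{(T)}_{i,j} = -z^{-1}\delta_{ij}$ for $i, j \in T$ and $G^{(T)}_{i,j} = 0$ whenever exactly one of $i, j$ lies in $T$. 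From $(H - z)G = I = (H^{(T)} - z)G^{(T)}$ I obtain the basic identity
\[
G - G^{(T)} \;=\; - G\, B\, G^{(T)} \;=\; - G^{(T)}\, B\, G.
\]

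First I would prove \eqref{eq:Resoltwo}. Fixing $i \in T$ and $j \in J \subset T^c$, the term $G^{(T)}_{ij}$ vanishes by the block structure above, so the resolvent identity gives $G_{ij} = -\sum_{k,l} G_{ik} B_{kl} G^{(T)}_{lj}$. Inside the sum, nonvanishing of $G^{(T)}_{lj}$ forces $l \in T^c$, and then $B_{kl}\neq 0$ forces $k \in T$. The surviving sum is exactly the $(i,j)$ entry of $-G_{T,T}\, H_{T, T^c}\, G^{(T)}_{T^c, J}$, which is \eqref{eq:Resoltwo}.

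Next I would derive \eqref{eq:Resolone}. For $I, J \subset T^c$ the analogous index bookkeeping in $G - G^{(T)} = -G B G^{(T)}$ yields
\[
G_{I,J} \;=\; G^{(T)}_{I,J} - G_{I,T}\, H_{T, T^c}\, G^{(T)}_{T^c, J}.
\]
To match the stated form I substitute $H_{T, T^c}\, G^{(T)}_{T^c, J} = -(G_{T,T})^{-1} G_{T,J}$, which is just \eqref{eq:Resoltwo} multiplied on the left by $(G_{T,T})^{-1}$. The invertibility of $G_{T,T}$ is automatic: since $\mathrm{Im}\, G = \eta\, G G^* \succ 0$ for $z = E + i\eta$ with $\eta > 0$, every principal submatrix $G_{T,T}$ has strictly positive imaginary part and is therefore invertible.

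There is no substantive obstacle here; the statement is a deterministic algebraic identity. The only point requiring care is the bookkeeping of which blocks of $G^{(T)}$ vanish, which is what lets the double sums collapse neatly into the block products appearing in the statement. These identities are the deterministic tools that will later be combined with concentration estimates for quadratic forms in Section \ref{sec:matrissc}.
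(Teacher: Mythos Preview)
Your argument is correct and follows exactly the route the paper indicates: the paper does not write out a proof but states that ``the proof is via the resolvent equation and can be found in \cite[Lemma 3.3]{Che2016}'', and your derivation is precisely a clean implementation of the resolvent identity $G - G^{(T)} = -G\,B\,G^{(T)}$ together with the block-diagonal structure of $H^{(T)}-z$. The bookkeeping and the invertibility argument for $G_{T,T}$ are both sound.
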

As an immediate corollary, we can estimate the sizes of the Green's function after row removal in terms of the original Green's function (more precisely, in terms of the stochastic control parameters from Definition \ref{def:StCo}). This is used as a deterministic a priori estimate later.
\begin{cor} \label{col:StocBnd}
 Let $i$ be an integer in $\{1,\ldots, 2N\}$ and let
 $$
 I\subset [i - 2(\log N)^6, i + 2(\log N)^6]\cap \Z.
 $$
  Then 
\begin{equation} \label{eq:somebound}
    \vert G_{kl}^{(I)} \vert \le 8 (\log N)^6 \Gamma^2 \gamma,\qquad k,l\in \{1,\ldots,2N\}\setminus I.
\end{equation}
\end{cor}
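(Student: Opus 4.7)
The plan is to prove the corollary as a direct application of the resolvent identity \eqref{eq:Resolone} from Lemma \ref{Lm:Resol}, with a careful bookkeeping of the quantities involved.

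First I would apply Lemma \ref{Lm:Resol} with the set $T$ in that lemma taken to be the set $I$ of the corollary, and with the "outer" indices $\{k\}$ and $\{l\}$ (both lying in $\{1,\ldots,2N\}\setminus I$) playing the roles of the sets $I$ and $J$ of the lemma. Since $\{k\}$ and $\{l\}$ are disjoint from $I$, the hypothesis of the lemma is satisfied. Rearranging \eqref{eq:Resolone} gives the identity
$$
G^{(I)}_{kl} \;=\; G_{kl} \;-\; G_{k,I}\,(G_{I,I})^{-1}\,G_{I,l},
$$
which expresses the quantity of interest in terms of objects controlled by $\Gamma$ and $\gamma$.

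Next I would estimate each of the two terms. The first term satisfies $|G_{kl}|\le \Gamma$ by the very definition of $\Gamma$. For the second term, I view $G_{k,I}$ as a row vector and $G_{I,l}$ as a column vector of length $|I|$. By Cauchy--Schwarz and the operator-norm inequality,
$$
\bigl|G_{k,I}\,(G_{I,I})^{-1}\,G_{I,l}\bigr|
\;\le\; \|G_{k,I}\|_{2}\;\bigl\|(G_{I,I})^{-1}\bigr\|_{\mathrm{op}}\;\|G_{I,l}\|_{2}.
$$
Each entry of $G_{k,I}$ and $G_{I,l}$ is bounded by $\Gamma$, so $\|G_{k,I}\|_2,\|G_{I,l}\|_2\le \sqrt{|I|}\,\Gamma$. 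Moreover, $(G_{I,I})^{-1}$ is the $J=\emptyset$ instance of the block inverses appearing in the definition of $\gamma$, and since $I$ itself lies in the prescribed window around $i$, we have $\bigl\|(G_{I,I})^{-1}\bigr\|_{\mathrm{op}}\le\gamma$. Hence this second term is bounded by $|I|\,\Gamma^{2}\,\gamma$.

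Finally, combining the two bounds and using $|I|\le 4(\log N)^6+1$ together with $\Gamma,\gamma\ge 1$ gives
$$
\bigl|G^{(I)}_{kl}\bigr|
\;\le\; \Gamma + \bigl(4(\log N)^6+1\bigr)\Gamma^{2}\gamma
\;\le\; 8(\log N)^6\,\Gamma^{2}\gamma
$$
for $N$ large enough, which is the claim. There is no real obstacle here; the only mildly delicate point is to verify that the set $I$ (not just a single index) is an admissible choice in the definition of $\gamma$, which is immediate since $I$ is by hypothesis contained in the window $[i-2(\log N)^6,\,i+2(\log N)^6]\cap\Z$ that appears in Definition \ref{def:StCo}(ii).
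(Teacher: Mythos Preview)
Your proposal is correct and follows essentially the same approach as the paper's own proof: apply the resolvent identity \eqref{eq:Resolone} with $T=I$ to write $G^{(I)}_{kl}=G_{kl}-G_{k,I}(G_{I,I})^{-1}G_{I,l}$, then bound the first term by $\Gamma$ and the second by $|I|\,\Gamma^2\gamma$ via the operator-norm inequality. The only cosmetic difference is that you make explicit the check that $J=\emptyset$ is admissible in the definition of $\gamma$.
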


\begin{proof}
Consider the identity of \eqref{eq:Resolone} with $I = \{k\}$, $J=\{l\}$ and $T=I$. We obtain
\begin{equation}
    G^{(I)}_{kl} = G_{kl} - G_{k, I} (G_{I,I})^{-1} G_{I, j}.
\end{equation}
We can treat the second term as an inner product and bound it by treating $G_{k,I}$ and $G_{I,j}$ as $4(\log N)^6$ dimensional vectors. That is
$$ 
\begin{aligned}
|G_{k, I} (G_{I,I})^{-1} G_{I, j}|
\le \|G_{k,I} \|_2\vert \| (G_{I,I})^{-1} \|_{2\to 2}\| G_{I,j} \|_2 
\le 4(\log N)^6 \Gamma^2 \gamma. 
\end{aligned}
$$
The first term $|G_{kl}|$ can be bounded by $\Gamma$, which is less than $4 (\log N)^6 \Gamma^2 \gamma$ since $\Gamma, \gamma \ge 1$ by definition.
\end{proof}

\subsection{Concentration Identities}
A key ingredient in the proof of \cite[Lemma 3.10]{Che2016} are concentration bounds for variables with finite range of dependence. Here we extend these estimates to the case of logarithmic range of dependence by grouping arguments similar to those in \cite{Che2016}.


\begin{lm} \label{lem:concen}
Let $a_1,\ldots,a_N$ be a family of dependent random variables that satisfy the property that $a_i$ and $a_j$ are independent whenever $|i-j|\geq \ceil{(\log N)^6}$

We fix a parameter $\sigma>0$, a power $p \ge p_0(\sigma)$ and assume the following moment estimate.
\begin{equation}
    \mathbb{E}[|a_i|^p] \le \frac{C^p}{\sqrt{N}^p}
\end{equation}
Then, there exists a universal constant $C>0$ such that for any collection of deterministic $A_{i}$ and $B_{ij}$ with $1\leq i,j\leq N$, with probability $1 -N^{-\sigma' p}$  and some $\sigma> \sigma'$, we have that 
\begin{align}\label{eq:concentration1}
    &\left \vert \sum_{i=1}^N A_i a_i - \mathbb{E}\left[\sum_{i=1}^N A_i a_i \right] \right\vert \le CN^{\sigma}  \left( \frac{\sup_i \vert A_i \vert}{\sqrt{N}} + \sqrt{\frac{1}{N}\sum_{i=1}^N |A_i|^2}\right),\\
    \label{eq:concentration2}
    & \left \vert \sum_{i,j=1}^N B_{ij} a_i a_j - \mathbb{E}\left [\sum_{i,j=1}^N B_{ij} a_i a_j\right] \right \vert \le C N^{\sigma}\left( \frac{\sup_{i,j}|B_{ij}|}{\sqrt{N}} + \sqrt{\frac{1}{N^2} \sum_{i,j=1}^N|B_{ij}|^2}\right).
\end{align}
\end{lm}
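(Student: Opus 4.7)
The plan is to reduce the dependent setting to the independent one via a grouping argument, then apply moment-based concentration. Set $W := \lceil (\log N)^6 \rceil$ and partition $\{1,\ldots,N\}$ into residue classes $\mathcal{I}_r := \{i : i \equiv r \pmod W\}$ for $r = 1, \ldots, W$. By the independence hypothesis, within each $\mathcal{I}_r$ the variables $\{a_i\}_{i\in\mathcal{I}_r}$ are jointly independent (any two distinct elements of $\mathcal{I}_r$ differ by at least $W$).

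For the linear bound \eqref{eq:concentration1}, decompose the centered sum as $\sum_{r=1}^W Z_r$ where $Z_r := \sum_{i\in\mathcal{I}_r} A_i(a_i - \mathbb{E}[a_i])$. Each $Z_r$ is a sum of independent mean-zero random variables, so Rosenthal's inequality yields
$$ \mathbb{E}|Z_r|^p \leq (Cp)^p \Big[ \big(\textstyle\sum_{i\in\mathcal{I}_r} |A_i|^2\, \mathbb{E}|a_i|^2\big)^{p/2} + \sum_{i\in\mathcal{I}_r} |A_i|^p\, \mathbb{E}|a_i|^p \Big]. $$
Using $\mathbb{E}|a_i|^2 \leq (\mathbb{E}|a_i|^p)^{2/p} \leq C^2/N$ and $\mathbb{E}|a_i|^p \leq C^p N^{-p/2}$, and combining over $r$ via the convexity bound $|\sum_r Z_r|^p \leq W^{p-1}\sum_r |Z_r|^p$, one obtains
$$ \mathbb{E}\Big|\sum_i A_i(a_i - \mathbb{E}[a_i])\Big|^p \leq (CpW)^p\, N^{-p/2} \Big[ \big(\textstyle\sum_i |A_i|^2\big)^{p/2} + N \sup_i |A_i|^p \Big]. $$
Markov's inequality then produces \eqref{eq:concentration1} on an event of probability $\geq 1 - N^{-\sigma' p}$, provided $p \geq p_0(\sigma)$ is chosen large enough that both the polylog factor $W$ and the $N^{1/p}$ coming from $(N \sup_i |A_i|^p)^{1/p}$ are absorbed into the factor $N^\sigma$; this is possible since $W = N^{o(1)}$ and $N^{1/p} \to 1$ as $p \to \infty$.

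For the quadratic bound \eqref{eq:concentration2}, first center each variable by writing $\tilde a_i := a_i - \mathbb{E}[a_i]$ and using the algebraic identity
$$ a_i a_j - \mathbb{E}[a_i a_j] = \big(\tilde a_i \tilde a_j - \mathbb{E}[\tilde a_i \tilde a_j]\big) + \mathbb{E}[a_j]\, \tilde a_i + \mathbb{E}[a_i]\, \tilde a_j. $$
The two linear terms reduce to \eqref{eq:concentration1} applied to coefficients such as $A'_i := \sum_j B_{ij}\mathbb{E}[a_j]$, whose $\ell^2$- and $\ell^\infty$-norms are controlled by those of $B$ via Cauchy-Schwarz together with $|\mathbb{E}[a_i]| \leq C/\sqrt{N}$. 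The genuinely quadratic piece $\sum_{i,j} B_{ij}(\tilde a_i \tilde a_j - \mathbb{E}[\tilde a_i \tilde a_j])$ is then handled by partitioning pairs $(i,j)$ according to $(i \bmod W,\, j \bmod W)$ into $W^2$ sub-bilinear forms and applying a Rosenthal/Hanson-Wright-type moment bound within each sub-form. A mild subtlety is that within a single sub-form the \emph{cross} indices of two distinct pairs may still be $W$-close, so we further refine the partition (for instance by also splitting each residue class into odd/even blocks of length $W$) until the four indices of any two product pairs are mutually $W$-separated. This double grouping costs only polylog factors in $N$, which are again swallowed by $N^\sigma$ for $p$ sufficiently large.

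The main obstacle is the quadratic case: one must verify that an appropriate grouping genuinely reduces the bilinear form to one in independent variables, on which a Hanson-Wright-type inequality yields the correct $p$-th moment bound. This is essentially the extension of the concentration estimates of \cite{Che2016} from a constant to a logarithmic range of dependence; the key observation that makes the adaptation work is simply that $W = (\log N)^6$ is $N^{o(1)}$, so all powers of $W$ appearing in the intermediate bounds are harmless on the right-hand side.
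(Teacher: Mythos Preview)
Your approach is essentially the paper's: partition indices by residue classes modulo $W=\lceil(\log N)^6\rceil$ so that each sub-sum involves independent variables, apply standard concentration, and absorb the polylog multiplicity into $N^\sigma$. The linear case matches the paper's argument; you invoke Rosenthal explicitly and combine via the convexity bound $|\sum_r Z_r|^p\le W^{p-1}\sum_r|Z_r|^p$, while the paper cites \cite[Thm~7.7]{bYau} on each class and combines via a union bound, but these are interchangeable.

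For the quadratic case the paper's organization is more concrete than yours. Rather than centering first and then seeking an ad hoc refinement of the $(i\bmod W,\,j\bmod W)$ partition, the paper begins by splitting the sum according to whether $|i-j|>(\log N)^6$ or $|i-j|\le(\log N)^6$. The near-diagonal piece is handled by fixing $\beta=i-j$ and partitioning $i$ modulo $\lceil 2(\log N)^6\rceil$; one then checks directly that any two distinct summands $a_{i_1}a_{i_1-\beta}$ and $a_{i_2}a_{i_2-\beta}$ have all four indices mutually $\ge W$ apart, so these products are genuinely independent and the sum reduces to a linear concentration. The far-diagonal piece is grouped by $(i\bmod W,\,j\bmod W)$ just as you propose. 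Your ``further refinement by odd/even blocks of length $W$'' is the right instinct for the cross-index issue you correctly flag, but as stated it is vague and does not obviously achieve mutual $W$-separation; the paper's near/far split is the clean device that resolves this, by isolating the pairs where the cross-dependence can arise and treating them separately. Your preliminary centering step is harmless but not needed in the paper's version.
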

\begin{proof}
All sums are implicitly over $\{1,\ldots,N\}$. 
By dividing the integers modulo $\lceil( \log N)^6\rceil$, we see that each sum
\begin{equation}
    \mathcal{A}_k= \sum_{i = k \textnormal{ mod }\lceil( \log N)^6\rceil } A_i a_i - \mathbb{E}\l[\sum_{i = k \textnormal{ mod } \lceil( \log N)^6\rceil } A_i a_i \r],
\end{equation}
for $0\le k \le\lceil(\log N)^6\rceil-1$ is a sum of terms involving only independent $a_i$. We can then appeal to standard concentration estimates, see \cite[Theorem 7.7]{bYau}, to conclude that with probability $1- c_p N^{\sigma' p}$, we have
$$
\mathcal{A}_k \le N^{\sigma'}\left( \frac{\sup_i |A_i|}{\sqrt{N}} + \sqrt{\frac{1}{N}\sum_{i=1}^N |A_i|^2 }\right),
$$
 where $\sigma'<\sigma$. A union bound for all the sums $\mathcal{A}_k$ gives the desired estimate. We can absorb $(\log N)^6$ factors into $N^{\sigma}$ both in the bound \eqref{eq:concentration1}.

We come to the quadratic summation, where we have to do some more manipulation. First, we split
\begin{equation} \label{eq:Bsplit}
\begin{aligned}
    &\sum_{i,j} B_{i,j}a_{i} a_{j}-\mathbb E\left[\sum_{i,j} B_{i,j}a_{i} a_{j}\right]\\
     = &\sum_{ |i-j|>(\log N)^6} B_{i,j} a_i a_j + \sum_{|i-j|\le (\log N)^6}^N B_{i,j} a_i a_j
     -\mathbb E\l[ \sum_{|i-j|\le (\log N)^6}^N B_{i,j} a_i a_j\r]
    \end{aligned}
\end{equation}

Consider the first term on the right-hand side. It can be split further into a total of $(\log N)^{12}$ parts based on the moduli of $i$ and $j$ with $(\log N)^6$; call them
\begin{equation}
\begin{aligned}
    \mathcal{B}_{\al,\beta}= &\sum^N_{\substack{ |i-j| > (\log N)^6\\ i =\al \textnormal{ mod }\lceil( \log N)^6\rceil\\ j = \beta \textnormal{ mod }\lceil( \log N)^6\rceil}} B_{i,j} a_i a_j\end{aligned}
\end{equation}
Each $B_{\al,\beta}$ is a centered sum of independent random variables and so we can appeal to standard concentration estimates and a union bound with the logarithmic factor $(\log N)^{12}$ absorbed by the power function.

It remains to estimate the second and third terms on the right-hand side of \eqref{eq:Bsplit}. To this end, we split the sum into $4(\log N)^6$ parts as follows. Let $\al,\beta$ be integers with $1\leq \al\leq \lceil 2(\log N)^6 \rceil$ and  $-(\log N)^6\leq \beta\leq (\log N)^6$ and set
\begin{equation}
\begin{aligned}
    \mathcal{C}_{\al,\beta}=& \sum_{\substack{ i-j = \beta\\ i = \al  \textnormal{ mod } \lceil 2(\log N)^6 \rceil}} B_{i,j} a_i a_j- \mathbb{E} \left[ \sum_{\substack{ i-j = \beta\\ i = \al  \textnormal{ mod } \lceil 2(\log N)^6 \rceil}} B_{i,j} a_i a_j\right],
\end{aligned}
\end{equation}
Each $ \mathcal{C}_{\al,\beta}$ is a centered sum of independent random variables and there are $4(\log N)^{12}$ choices of $\al$ and $\beta$, so the result follows from standard concentration estimates and a union bound.
\end{proof}

We can use the previous lemma, the Ward Identity, and our stochastic control parameters to write out an explicit bound for the sums we consider upon applying resolvent identities. 
\begin{cor}\label{cor:conc}
Let $a_i$ be random variables satisfying the same conditions as in Lemma \ref{lem:concen}; in addition. 
Assume that all $A_i ,B_{ij}$ belong to the set $\{G_{ij}^{(T)}\}_{i,j}$ for some subset $T$ of 
$$
[k - 2(\log N)^6,  k+ 2(\log N)^6]\cap \Z,\textnormal{ with } k \in \{1,\ldots, 2N\}.
$$

 Then, for any $\sigma>0$ and $p \ge p_0(\sigma)\ge 2$, we have with probability at least $1 - N^{-\sigma' p}$, and some $\sigma> \sigma'$
\begin{equation} \label{eq:MatrConc}
\begin{aligned}
    & \left|\sum_i A_i a_i - \mathbb{E}\left[\sum_i A_i a_i\right] \right| \le C \frac{N^{\sigma} \Gamma^2 \gamma}{\sqrt{N \eta}}, \\
    & \left|\sum_i B_{ij} a_i a_j - \mathbb{E}\left[ \sum_{i,j} B_{i,j} a_i a_j \right] \right| \le  C \frac{N^\sigma \Gamma^2 \gamma}{\sqrt{N \eta}}.
\end{aligned}
\end{equation}
\end{cor}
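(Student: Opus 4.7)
The plan is to apply Lemma \ref{lem:concen} directly to the sums $\sum_i A_i a_i$ and $\sum_{i,j} B_{ij} a_i a_j$, then substitute the deterministic size bounds on $A_i, B_{ij}$ supplied by Corollary \ref{col:StocBnd} and the Ward identities of Lemma \ref{lem:Ward}. Two preliminary points must be addressed. First, the coefficients $A_i, B_{ij}$ are themselves random, being entries of $G^{(T)}$. This is resolved by the decoupling built into the definition of $G^{(T)}$: since $H^{(T)}$ is obtained from $H$ by zeroing out rows and columns indexed by $T$, the matrix $G^{(T)}$ is independent of the randomness living in those rows and columns. In the use of this corollary within the proof of Lemma \ref{lm:scGF}, $T$ will be selected so that the family $\{a_i\}$ corresponds precisely to entries in the removed rows/columns; Lemma \ref{lem:concen} is then invoked conditionally on $G^{(T)}$, treating $A_i$ and $B_{ij}$ as deterministic. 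Second, the Ward identities in Lemma \ref{lem:Ward} are stated for $G$, but zeroing out rows and columns preserves both Hermiticity and the block anti-diagonal structure of $H$, so the same identities hold verbatim for $G^{(T)}$.

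With these observations in place the computation is routine. Corollary \ref{col:StocBnd} yields the deterministic pointwise bound
\begin{equation*}
|G^{(T)}_{ij}| \leq 8(\log N)^6 \Gamma^2 \gamma,
\end{equation*}
which controls the supremum terms in \eqref{eq:concentration1}--\eqref{eq:concentration2} by $C(\log N)^6 \Gamma^2 \gamma/\sqrt{N}$. Combining the Ward identity with the same pointwise bound gives
\begin{equation*}
\sum_l |G^{(T)}_{il}|^2 \;\leq\; \frac{\textnormal{Im}\,G^{(T)}_{ii}}{\eta} \;\leq\; \frac{C(\log N)^6 \Gamma^2 \gamma}{\eta},
\end{equation*}
and a further summation in $i$ produces $\sum_{i,j}|G^{(T)}_{ij}|^2 \leq C(\log N)^6 N \Gamma^2 \gamma/\eta$. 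Substituting these estimates into Lemma \ref{lem:concen} and using $\eta \leq 1$ to bound $1/\sqrt{N}$ by $1/\sqrt{N\eta}$, both right-hand sides are controlled by $C(\log N)^6 N^{\sigma'} \Gamma^2 \gamma/\sqrt{N\eta}$ on an event of probability at least $1 - N^{-\sigma' p}$. The polylogarithmic prefactor is absorbed into $N^\sigma$ by choosing $\sigma' < \sigma$ appropriately, producing the stated bound.

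The main subtlety is not the bookkeeping above but ensuring, at each invocation of this corollary within the downstream proof of Lemma \ref{lm:scGF}, that the window $T$ can be chosen to render $G^{(T)}$ independent of the random coefficients $\{a_i\}$ entering the sums while simultaneously keeping $T$ inside a window of logarithmic size so that Corollary \ref{col:StocBnd} continues to apply with the stated constants. This is a delicate constraint tied to the sequence of resolvent expansions \eqref{eq:Resolone}--\eqref{eq:Resoltwo} used to decouple the rows and columns of $H$ from the Green's function at each step; once it is verified that the accumulated $T$ never leaves a logarithmic neighborhood, the conditional application of Lemma \ref{lem:concen} goes through uniformly.
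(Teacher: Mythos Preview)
Your proposal is correct and follows essentially the same route as the paper: apply Lemma \ref{lem:concen}, bound the supremum terms via Corollary \ref{col:StocBnd}, bound the $\ell^2$ terms via the Ward identity for $G^{(T)}$ together with Corollary \ref{col:StocBnd}, and absorb the polylogarithmic factors into $N^\sigma$. Your discussion of the conditioning issue---that $G^{(T)}$ must be independent of the $a_i$ so that Lemma \ref{lem:concen} may be applied conditionally---is more explicit than the paper's proof, which simply applies the lemma without comment; this is a genuine clarification and worth keeping.
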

\begin{proof}
We focus on the first estimate in \eqref{eq:MatrConc}.
By Lemma \ref{lem:concen}, we have with probability at least $1- N^{-\sigma' p}$ that
\begin{equation}
    \left|\sum _i A_i a_i - \mathbb{E}\left[ \sum_i A_i a_i \right] \right| \le  N^{\sigma'} \left( \frac{\sup_i |G_{ki}^{(T)}|}{\sqrt{N}} + \sqrt{\frac{1}{N} \sum_i |G_{ki}^{(T)}}|^2\right).
\end{equation}

By applying the Ward Identity, Lemma \ref{lem:Ward}, we can bound the sum $\frac{1}{N} \sum_i |G_{ki}^{(T)}|^2$ by $\frac{\textnormal{Im}[G_{kk}^{(T)}]}{N\eta}$. Now we apply Corollary \ref{col:StocBnd} to bound the Green function terms by $8(\log N)^6 \Gamma^2 \gamma$. The $(\log N)$ factors can be absorbed into $N^{\sigma}$. This gives us the desired first inequality of \eqref{eq:MatrConc}. 

The second inequality in \eqref{eq:MatrConc} follows from similar arguments; we omit the details.
\end{proof}

\subsection{Preliminary self-consistent equation}
As in the case of independent variables, the derivation of the self-consistent equation with dependence relies on extracting appropriate matrix components to which the concentration estimates can then be applied in a second step.

By definition, the Green's function satisfies
\begin{equation}\label{eq:GFdefnidentity}
    \sum_k G_{ik} H_{kj} - z G_{ij} = \delta_{ij}.
\end{equation}
We let $T$ be the set of entries that are correlated with $j$, i.e.,
$$
T=[j-(\log N)^6,j+(\log N)^6]\cap \Z.
$$

\begin{defn}
Fix a parameter $\sigma$ and an integer $p$ such that $p \ge \frac{100}{\sigma}$. For two sequences of random variables $a = a^{(N)}$ and $b= b^{(N)}$, we say that $a =\mathcal O_{\sigma,p}(b)$ if there exists universal constants $c$ and $N_0$ such that for $N> N_0$, we know that $a^{(N)} \le b^{(N)}$ with probability at least $1 - (\log N)^c N^{-\sigma p}$. We will drop the superscript $(N)$ when the context is clear.
\end{defn}

The following lemma specifies a preliminary self-consistent equation.

\begin{lm}\label{lm:prelimSC}
Let $\sigma$ and $p \ge p_0(\sigma) \ge 2$. Then
\begin{equation}
    -G_{i T} H_{T T^c} G_{T^c T^c}^{(T)} H_{T^c j} - z G_{i j} = \delta_{i j} + \mathcal O_{\sigma,p}\l(\frac{N^{\sigma} \Gamma^2 \gamma}{\sqrt{N \eta}}\r).
\end{equation}
\end{lm}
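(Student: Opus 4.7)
The plan is to derive an \emph{exact} identity from the defining relation $GH-zG=I$ via the two resolvent identities of Lemma \ref{Lm:Resol}, and then control the two resulting residuals by a deterministic bound and by Corollary \ref{cor:conc}, respectively.

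Starting from $\sum_k G_{ik}H_{kj}=\delta_{ij}+zG_{ij}$ and splitting the sum according to $k\in T$ and $k\in T^c$, I first apply \eqref{eq:Resolone} with removal set $T$ to write $G_{iT^c}=G^{(T)}_{iT^c}+G_{iT}(G_{TT})^{-1}G_{TT^c}$, and then \eqref{eq:Resoltwo} to replace $G_{TT^c}$ by $-G_{TT}H_{TT^c}G^{(T)}_{T^cT^c}$; the $(G_{TT})^{\pm 1}$ factors cancel and rearrangement produces the \emph{exact} identity
\begin{equation*}
-G_{iT}H_{TT^c}G^{(T)}_{T^cT^c}H_{T^cj}-zG_{ij}=\delta_{ij}-G_{iT}H_{Tj}-G^{(T)}_{iT^c}H_{T^cj}.
\end{equation*}
For the edge case $i\in T$, where \eqref{eq:Resolone} does not directly apply, the term $G^{(T)}_{iT^c}$ vanishes by construction of $G^{(T)}$, and the same identity (without its last summand) follows instead from the Schur-complement formula $G_{TT}=(H_{TT}-z-H_{TT^c}G^{(T)}_{T^cT^c}H_{T^cT})^{-1}$.

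The first residual $G_{iT}H_{Tj}=\sum_{k\in T}G_{ik}H_{kj}$ I estimate deterministically: using $|G_{ik}|\le\Gamma$, $|H_{kj}|\le\|f\|_\infty/\sqrt N$ and $|T|\le 2(\log N)^6+1$ gives $|G_{iT}H_{Tj}|\le C(\log N)^6\Gamma/\sqrt N$, which is absorbed into the claimed error because $\Gamma,\gamma\ge 1$ and $\eta\le 1$. The second residual $G^{(T)}_{iT^c}H_{T^cj}=\sum_{k\in T^c}G^{(T)}_{ik}H_{kj}$ I handle via Corollary \ref{cor:conc}: the coefficients $A_k=G^{(T)}_{ik}$ lie in the family $\{G^{(J)}_{ab}\}$ that the corollary allows, while the random variables $a_k=H_{kj}$ are centered (by $\mathbb E[f]=0$) and inherit the $(\log N)^6$-range of dependence of the resampled model, so the corollary delivers exactly $\mathcal O_{\sigma,p}(N^\sigma\Gamma^2\gamma/\sqrt{N\eta})$.

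The main obstacle is justifying the application of Corollary \ref{cor:conc}: one must verify that, with $T$ chosen as the $(\log N)^6$-neighborhood of $j$ in the \emph{matrix} indices, the variables $\{H_{kj}\}_{k\in T^c}$ are probabilistically decoupled from the $T^c\times T^c$ block of $H$ on which $G^{(T)}$ depends. This requires translating between matrix indices of $H_Y$ and time indices of the orbit $\{y_m\}$ and then invoking the independence built into the resampling step of Section \ref{sec:Step1}. Once this check is in place, combining the exact identity with the two error bounds finishes the proof.
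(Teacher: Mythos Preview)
Your proof is correct and follows essentially the same route as the paper's: both start from $GH-zG=I$, split off the deterministic residual $G_{iT}H_{Tj}$, use \eqref{eq:Resolone} and \eqref{eq:Resoltwo} to produce the main term $-G_{iT}H_{TT^c}G^{(T)}_{T^cT^c}H_{T^cj}$, treat the case $i\in T$ separately, and invoke Corollary~\ref{cor:conc} on $G^{(T)}_{iT^c}H_{T^cj}$. Your explicit remark that the application of Corollary~\ref{cor:conc} requires checking independence of $\{H_{kj}\}_{k\in T^c}$ from $G^{(T)}$ via the orbit-index-to-matrix-index correspondence is a point the paper leaves implicit.
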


\begin{proof}
We recall the simple bounds $|H_{kj}| \le \frac{C}{\sqrt{N}}$ and $|G_{ik}|\le \Gamma$. 

Consider \eqref{eq:GFdefnidentity}. In the sum $\sum_{k} G_{ik} H_{kj}$, we bound the terms with $k \in T$ by $\frac{\Gamma}{\sqrt{N}}$ and obtain
$$
    G_{i T^c} H_{T^c j} - z G_{ij} = \delta_{ij} + \mathcal O\l(\frac{(\log N)^6 \Gamma}{\sqrt{N }}\r).
$$

We distinguish cases based on whether $i \in T$ or $i \not \in T$. 

If $i \in T$, we apply equation \eqref{eq:Resoltwo} and find
$$
    -G_{iT}H_{T T^c} G_{T^c T^c}^{(T)} H_{T^c j} - z G_{ij} = \delta_{ij} + \mathcal O_{\sigma,p}\l(\frac{(\log N)^6 \Gamma}{\sqrt{N}}\r). 
$$
Otherwise, if $i \not \in T$, we first apply equation \eqref{eq:Resolone} to obtain
$$
    \sum_{k\in T^c} G_{ik}^{(T)}H_{kj} + \sum_{k \in T^c}G_{i T}(G_{T,T})^{-1} G_{T,k} H_{kj} - z G_{ij}= \delta_{ij} + \mathcal O \l(\frac{(\log N)^6 \Gamma}{\sqrt{N}}\r).
$$
We can apply Corollary \ref{cor:conc} to the first term and \eqref{eq:Resoltwo} to the second term to obtain
$$
    G_{i T} H_{T T^c} G_{T^c T^c}^{(T)} H_{T^c j}- z G_{ij}= \delta_{ij} + \mathcal O_{\sigma,p} \l(\frac{N^\sigma \Gamma^2 \gamma}{\sqrt{N \eta}}\r)
$$
with probability at least $1 -N^{-\sigma p}$ as desired.
\end{proof}

\subsection{Conclusion}
Our goal at this point is to use our concentration estimates to replace $H_{T T^c} G_{T^c T^c} H_{T^c j}$ with its expectation. At this point, we have ensured that the term {$H_{T^c j}$} is completely independent of the term $G^{(T)}_{T^c T^c}$. However, we now need to guarantee that the term coming from $H_{T T^c} $ is independent of $G^{(T)}_{T^c T^c}$. Based on the first index $k \in T$ of $H_{T T^c}$, we need to apply a further resolvent expansion to remove terms in $G_{T^c T^c}^{(T)}$ that are correlated with $H_{T T^c}$. 
\begin{proof}[Proof of Lemma \ref{lm:scGF}]
Our starting point is Lemma \ref{lm:prelimSC}, i.e.,
$$
    -G_{i T} H_{T T^c} G_{T^c T^c}^{(T)} H_{T^c j} - z G_{i j} = \delta_{i j} + \mathcal O_{\sigma,p}\l(\frac{N^{\sigma} \Gamma^2 \gamma}{\sqrt{N \eta}}\r).
    $$

We let $S \subset T^c$ be the set of entries correlated with $T$ and let $U = T \cup S$.  We first split the summation over $T^c=U^c\cup S$ as,
$$
    H_{T T^c} G^{(T)}_{T^c T^c} H_{T^c j} = H_{T U^c} G^{(T)}_{U^c U^c} H_{U^c j} + H_{T U^c} G^{(T)}_{U^c S} H_{S j} + H_{T S} G^{(T)}_{S T^c} H_{T^c j}.
$$

\textit{Step 1.} We identify the leading term by applying the resolvent identity to replace the first $G^{(T)}_{U^c U^c}$ with $G^{(U)}_{U^c U^c}$ plus what we will show are error terms.
Observe from \eqref{eq:Resolone} that 
\begin{equation}
    G^{(T)}_{U^c U^c} = G^{(U)}_{U^c U^c} +G^{(T)}_{U^c S}(G^{(T)}_{SS})^{-1} G^{(T)}_{S U^c}.
\end{equation}

In particular, we have,
\begin{equation}\label{eq:decompT}
    H_{T T^c} G^{(T)}_{T^c T^c} H_{T^c j} = H_{T U^c} G^{(U)}_{U^c U^c} H_{U^c j} + e_{Tj},
\end{equation}
where an individual entry $e_{kj}$ of the error vector $e_{Tj}$ can be written as
\begin{equation}
\label{eq:ekj}
    e_{kj} = H_{k U^c} G^{(U)}_{U^c S} (G^{(T)}_{SS})^{-1} G^{(T)}_{S U^c} H_{U^c j} + H_{k U^c} G^{(T)}_{U^c S} H_{S j} + H_{k S} G^{(T)}_{S T^c} H_{T^c j}. 
\end{equation}

Since, by construction, the term $G^{(U)}_{U^c U^c}$ is independent from the terms appearing in $H_{T U^c}$ and $H_{U^c j}$, we are able to apply our concentration estimates as desired. 

Indeed, fix an entry $k \in T$. We apply the second concentration estimate in \eqref{eq:MatrConc} and \eqref{eq:Sigmadefn} to obtain
\begin{equation}
\begin{aligned}
    H_{k U^c} G^{(U)}_{U^c U^c} H_{U^c j} 
    =&\frac{1}{N}\sum_{l,m\in U^c}  G^{(U)}_{lm} \xi_{kljm}+ \mathcal O_{\sigma,p}\l(\frac{ N^{\sigma} \Gamma^2 \gamma}{\sqrt{N \eta}}\r).
    \end{aligned}
\end{equation}
We now have to replace $G^{(U)}_{lm}$ with $G_{lm}$ by reversing the resolvent expansion, i.e.,
\begin{equation}
    G^{(U)}_{lm} = G_{lm} - G_{lU}(G_{U,U})^{-1} G_{U m}.
\end{equation}

{We can apply the Cauchy-Shwarz inequality plus the Ward identity to bound}
\begin{equation}
    \left| \sum_{l ,m\in U^c}G_{lU} (G_{U,U})^{-1} G_{U m}\right|\le ||G_{\cdot U}||_2 ||(G_{U,U})^{-1}||_{2 \to 2} || || G_{U \cdot}||_{2} \le  \mathcal O\left(\frac{ \Gamma \gamma}{N \eta} \right)
\end{equation}

Combining these estimates, we have shown that
\begin{equation}
\begin{aligned}
    -\sum_{k,l,m} G_{i k} \xi_{kljm}G_{lm}- z G_{i j} 
    = \delta_{i j} +\sum_{k\in T} G_{ik}e_{kj}+ \mathcal O_{\sigma,p}\l(\frac{N^{\sigma} \Gamma^3 \gamma}{\sqrt{N \eta}}\r)
    \end{aligned}
\end{equation}

\textit{Step 2.}
It remains to estimate the error terms $e_{kj}$ from \eqref{eq:ekj}, or more precisely $\sum_{k\in T} G_{ik}e_{kj}$. 

For any fixed entry $s \in S$, we apply Corollary \ref{cor:conc} to $G^{(T)}_{S U^c} H_{U^c j}, H_{k U^c} G^{(T)}_{U^c S}$, and $G^{(T)}_{S T^c} H_{T^c j}$ to obtain
\begin{equation}
  G^{(T)}_{S U^c} H_{U^c j},\quad  H_{k U^c} G^{(T)}_{U^c S} ,\quad G^{(T)}_{S T^c} H_{T^c j} \le \mathcal O_{\sigma,p}\l(\frac{N^{\sigma}\Gamma^2 \gamma}{\sqrt{N \eta}}\r).
\end{equation}
For the remaining terms, e.g., $H_{k U^c}G^{(U)}_{U^c S}$, we can no longer apply the concentration estimate. However, observe from the resolvent expansion \eqref{eq:Resoltwo}, $T^c=U^c\cup S$  and equation \eqref{eq:somebound}
 that,
\begin{equation}
    H_{T U^c} G^{(T)}_{U^c S} =- (G_{T T})^{-1} G_{T S}+\mathcal O \l((\log N)^{12}\frac{\Gamma^2 \gamma}{\sqrt{N}}\r) 
    =\mathcal O \l((\log N)^{12}\Gamma^2\gamma\r)
\end{equation}
We conclude that 
\beq
\sum_{k\in T} G_{ik}e_{kj}= \mathcal O_{\sigma,p}\l(\frac{N^{\sigma} \Gamma^3 \gamma^2}{\sqrt{N \eta}}\r),
\eeq
 as desired. This completes Step 2 and proves Lemma \ref{lm:scGF}.
\end{proof}


\section{Analysis of the self-consistent equation}\label{sec:analysis}

In this section, we study the scalar self-consistent equation that arises from Proposition \ref{prop:sc}.

\subsection{Auxiliary self-consistent equation}
It is convenient to focus on the following auxiliary scalar self-consistent equation that arises from Proposition \ref{prop:sc} by dropping the error term, namely
\beq\label{eq:scanalyze}
-\frac{1}{M_N(z)} m_{\Phi^N}\l(-\frac{z}{M_N(z)}\r) =  M_N(z).
\eeq
Note that this differs slightly from the limiting self-consistent equation \eqref{eq:sc} satisfied by $m_\infty(z)$. Indeed, \eqref{eq:scanalyze} still features the $N\times N$ Toeplitz matrix $\Phi^N$ while \eqref{eq:sc} contains the infinite Toeplitz matrix $\Phi$ instead. 

First, we prove existence, uniqueness, and stability of solutions to \eqref{eq:scanalyze}. Second, in Lemma \ref{lem:Self-Consist3}, we show that $M_N$  is close the solution of the limiting equation $m_\infty$ for large $N$. Finally, we study in detail the propagation of error estimates on the Green's function into the self-consistent equation and show that we can use solutions of \eqref{eq:scanalyze} to approximate the Green's function.

\subsection{Existence and uniqueness of solutions}

\begin{lm}[Existence] \label{lem:exist}
There exists a solution ${M_N(z)}: \mathbb{C}^+ \to \mathbb{C}^+$ to the equation \eqref{eq:scanalyze}.
\end{lm}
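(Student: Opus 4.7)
The plan is to recast \eqref{eq:scanalyze} as a fixed-point problem on $\mathbb{C}^+$ and then apply Brouwer's theorem on a compact invariant convex subset. Using the spectral identity $m_{\Phi^N}(-z/M) = \tfrac{M}{N}\,\mathrm{Tr}(M\Phi^N+zI)^{-1}$, the equation \eqref{eq:scanalyze} is equivalent to
\begin{equation*}
M = \mathcal{F}_z(M)\ :=\ -\tfrac{1}{N}\,\mathrm{Tr}\bigl(M\Phi^N+zI\bigr)^{-1},
\end{equation*}
so it suffices to find a fixed point of $\mathcal{F}_z$ in $\mathbb{C}^+$.

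First, I would check that $\mathcal{F}_z$ is a well-defined self-map of $\mathbb{C}^+$. Writing $M=\alpha+i\beta$ with $\beta>0$, the imaginary part of $M\Phi^N+zI$ equals $\beta\Phi^N+\eta I$, which is strictly positive definite by Lemma \ref{lm:PhiNest}. Hence $M\Phi^N+zI$ is invertible, and the identity $\mathrm{Im}(A^{-1})=-A^{-*}\mathrm{Im}(A)A^{-1}$ for matrices $A$ with positive-definite imaginary part yields
\begin{equation*}
\mathrm{Im}\,\mathcal{F}_z(M) = \tfrac{1}{N}\mathrm{Tr}\bigl((M\Phi^N+zI)^{-*}(\beta\Phi^N+\eta I)(M\Phi^N+zI)^{-1}\bigr) > 0.
\end{equation*}

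Next, I would derive two uniform bounds using the fact that the eigenvalues $\lambda_i$ of $\Phi^N$ lie in $[c,C]$ by Lemma \ref{lm:PhiNest}. The trivial estimate $|M\lambda_i+z| \geq \mathrm{Im}(M)\lambda_i + \eta \geq \eta$ gives $|\mathcal{F}_z(M)| \leq \eta^{-1}$ for every $M\in\mathbb{C}^+$. Dropping the nonnegative term $\mathrm{Im}(M)\lambda_i$ in the numerator of the imaginary-part formula and using $|M\lambda_i+z|\leq C|M|+|z|$ yields $\mathrm{Im}\,\mathcal{F}_z(M) \geq \eta/(C|M|+|z|)^2$. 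Combined with the a priori bound $|M|\leq \sqrt{2}\,\eta^{-1}$ on the region of interest, this produces a quantitative lower bound of the form $\mathrm{Im}\,\mathcal{F}_z(M) \geq c_z\, \eta^3$ for some constant $c_z > 0$ depending only on $z$ and $C$.

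Consequently, the compact convex rectangle
\begin{equation*}
K = \{M\in\mathbb{C}\ :\ |\mathrm{Re}\,M|\leq \eta^{-1},\ c_z\,\eta^3 \leq \mathrm{Im}\,M \leq \eta^{-1}\}
\end{equation*}
satisfies $\mathcal{F}_z(K)\subset K$, and Brouwer's fixed-point theorem applied to the continuous self-map $\mathcal{F}_z|_K:K\to K$ delivers the desired $M_N(z)\in K\subset\mathbb{C}^+$. The main obstacle is the noncompactness of $\mathbb{C}^+$: a naive iteration argument could escape to the real axis. The strict positivity $\Phi^N\geq cI$ from Lemma \ref{lm:PhiNest} is exactly what is needed to overcome this, as it underpins both the upper bound $|\mathcal{F}_z|\leq\eta^{-1}$ and the lower bound on $\mathrm{Im}\,\mathcal{F}_z$ that together carve out the explicit invariant compact set $K$.
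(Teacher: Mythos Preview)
Your proposal is correct and follows essentially the same approach as the paper: recast \eqref{eq:scanalyze} as a fixed-point equation for the map $F(w)=-\int (wx+z)^{-1}\,\d\rho_{\Phi^N}(x)$ (which is exactly your $\mathcal{F}_z$ in trace form), use the spectral bounds on $\Phi^N$ from Lemma \ref{lm:PhiNest} to carve out a compact convex invariant region in $\mathbb{C}^+$, and apply Brouwer's theorem. The only cosmetic difference is that the paper uses the region $\{|w|\le \eta^{-1},\ \mathrm{Im}\,w\ge \eta/(C\eta^{-1}+|z|)^2\}$ whereas you use a rectangle; the estimates producing these two regions are identical.
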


We emphasize that { $M_N(z)$ }denotes a scalar quantity. The proof uses Brouwer's fixed point theorem. 

\begin{proof}
Fix $z\in\C^+$.
We define the function $F(w)= - \frac{1}{w} m_{\Phi^N}(-\frac{z}{w})$ that appears on the left-hand side of \eqref{eq:scanalyze}. We can rewrite $F(w)$ as
\begin{equation}
    F(w)= - \int_{\mathbb{R}} \frac{ 1}{w x +z}\d\rho_{\Phi^N}(x)
\end{equation}
where $\d\rho_{\Phi^N}$ is the empirical spectral distribution of $\Phi^N$. 

We consider the compact, convex domain
\begin{equation}
    \mathcal{D}_z:=\left\{ w \in \mathbb{C}^+\;:\;|w| \le \frac{1}{\textnormal{Im}[z]},\quad \textnormal{Im}[w]  \ge \frac{\textnormal{Im[z]}}{\left( \frac{C}{\textnormal{Im}[z]} +|z| \right)^2}\right \}
\end{equation}
or an appropriate constant $C>0$ to be determined. Below we show that $F:\mathcal{D}_z\to\mathcal{D}_z$. Then Brouwer's fixed point theorem implies that $F$ has a fixed point. This fixed point is then the desired solution $M_N(z)\in \mathcal D_z$ which we note has positive imaginary part.

It remains to establish  $F:\mathcal{D}_z\to\mathcal{D}_z$. Let $w\in\mathcal D_z$.  Since $z,w$ have positive imaginary part { and $x$ is non-negative},
$|w x + z| \ge \textnormal{Im}[z]$ and so $|F(w)|\leq \frac{1}{\mathrm{Im}\,[z]}\rho_{\Phi^N}(\R)=\frac{1}{\mathrm{Im}\,[z]}$. Next, we consider the behavior of the imaginary part,
\begin{equation}\label{eq:Frep}
    \textnormal{Im}[F(w)] = \int_{\mathbb{R}} \frac{\textnormal{Im}[w] x  + \textnormal{Im}[z] }{|x w + z|^2} \d \rho_{\Phi^N}(x).
\end{equation}

From \eqref{eq:PhiNspecbounds}, we conclude that $\d\rho_{\Phi^N}$ is supported on $[g_{\min},C]\subset \R_+$. Using that $|w| \le \frac{1}{\textnormal{Im}[z]}$, we have on the support of $\d\rho_{\Phi^N}$ that  $\frac{1}{|w x + z|^2} \ge \frac{1}{\left(\frac{C}{\textnormal{Im[z]}} + |z| \right)^2}$. Hence
\begin{equation}
    \textnormal{Im}[F(w)] \ge \frac{\textnormal{Im}[z]}{\left(\frac{C}{\textnormal{Im[z]}} + |z| \right)^2}
\end{equation}
as desired.
\end{proof}

Since Brouwer's fixed point theorem does not imply uniqueness, we have to prove it by hand using the structure of the self-consistent equation.

\begin{lm}[Uniqueness] \label{lem:uniq}
Let $z\in\C^+$. Then there is a unique solution $M_N(z)$ to \eqref{eq:scanalyze} with positive imaginary part.
\end{lm}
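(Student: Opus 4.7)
The plan is to execute the classical dichotomy-via-Cauchy--Schwarz argument: if $w_1,w_2\in\mathbb C^+$ are two solutions of \eqref{eq:scanalyze}, then their difference satisfies $(w_1-w_2)(1-K)=0$ for a suitable integral kernel $K$, and a strict bound $|K|<1$ extracted from the imaginary part of the equation forces $w_1=w_2$.

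First, I will put \eqref{eq:scanalyze} into a cleaner integral form. Expanding $m_{\Phi^N}(-z/w)=\int w(wx+z)^{-1}\,\d\rho_{\Phi^N}(x)$, where $\rho_{\Phi^N}$ denotes the spectral measure of $\Phi^N$, shows that $w=M_N(z)\in\mathbb C^+$ solves \eqref{eq:scanalyze} if and only if
\beq\label{eq:uniqplan1}
w+\int\frac{1}{wx+z}\,\d\rho_{\Phi^N}(x)=0.
\eeq
Crucially, by Lemma \ref{lm:PhiNest} we have $\supp\rho_{\Phi^N}\subset[g_{\min},C]\subset(0,\infty)$, a fact that will be used repeatedly below.

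Next, suppose $w_1,w_2\in\mathbb C^+$ both satisfy \eqref{eq:uniqplan1}. Subtracting the two equations and using the partial-fraction identity $(w_1x+z)^{-1}-(w_2x+z)^{-1}=-(w_1-w_2)x/[(w_1x+z)(w_2x+z)]$ yields $(w_1-w_2)(1-K)=0$ with
\beq
K=\int\frac{x}{(w_1x+z)(w_2x+z)}\,\d\rho_{\Phi^N}(x).
\eeq
It therefore suffices to rule out $K=1$.

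To produce the needed strict bound $|K|<1$, I take the imaginary part of \eqref{eq:uniqplan1}. Using $\mathrm{Im}\,w>0$, $\mathrm{Im}\,z>0$, and dividing through by $\mathrm{Im}\,w$ gives the identity
\beq
1=\int\frac{x}{|wx+z|^2}\,\d\rho_{\Phi^N}(x)+\frac{\mathrm{Im}\,z}{\mathrm{Im}\,w}\int\frac{1}{|wx+z|^2}\,\d\rho_{\Phi^N}(x),
\eeq
from which $\int x|wx+z|^{-2}\,\d\rho_{\Phi^N}(x)<1$ for each of $w=w_1$ and $w=w_2$, the second term being strictly positive. Since $x\ge g_{\min}>0$ on $\supp\rho_{\Phi^N}$, the factorization $x=\sqrt x\cdot\sqrt x$ is licit and Cauchy--Schwarz in $L^2(\d\rho_{\Phi^N})$ yields
\beq
|K|\le\left(\int\frac{x}{|w_1x+z|^2}\,\d\rho_{\Phi^N}(x)\right)^{1/2}\left(\int\frac{x}{|w_2x+z|^2}\,\d\rho_{\Phi^N}(x)\right)^{1/2}<1,
\eeq
contradicting $K=1$ and forcing $w_1=w_2$.

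I do not anticipate a serious obstacle. The only substantive analytic input beyond routine algebra is the strict positivity $\supp\rho_{\Phi^N}\subset(0,\infty)$, which is already encoded in the admissibility hypothesis via Lemma \ref{lm:PhiNest}; it is precisely this positivity that simultaneously supplies the strict imaginary-part inequality and justifies the Cauchy--Schwarz split with the weight $\sqrt x$.
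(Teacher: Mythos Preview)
Your argument is correct and is essentially identical to the paper's own proof: both rewrite the difference of two solutions as $(w_1-w_2)(1-K)=0$, extract the strict inequality $\int x|wx+z|^{-2}\,\d\rho_{\Phi^N}(x)<1$ from the imaginary part of the equation, and then bound $|K|<1$ via Cauchy--Schwarz. Your explicit invocation of Lemma~\ref{lm:PhiNest} to justify $x>0$ on $\supp\rho_{\Phi^N}$ (needed for the $\sqrt{x}$ split) is a welcome clarification that the paper leaves implicit.
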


\begin{proof}
Fix $z\in\C^+$. We first show that any solution $M_N(z)$ of \eqref{eq:scanalyze} with positive imaginary part must satisfy 
\begin{equation} \label{eq:Imagprtbnd}
    1 - \int_{\mathbb{R}} \frac{x}{|M_N(z)x + z|^2}  \d\rho_{\Phi^N}(x) > 0.
\end{equation}
Indeed, taking imaginary parts of both sides of \eqref{eq:scanalyze} in the representation \eqref{eq:Frep}, we obtain
\begin{equation} 
  \textnormal{Im}[M_N(z)] =  \textnormal{Im}[M_N] \int_{\mathbb{R}} \frac{x  }{|M_N(z) x + z|^2}   \d\rho_{\Phi^N}(x) 
    + \textnormal{Im}[z] \int_{\mathbb{R}} \frac{1}{|{M_N(z)}x+ z|^2}  \d\rho_{\Phi^N}(x) 
\end{equation}
from which \eqref{eq:Imagprtbnd} follows by rearranging.

Now suppose that we have two solutions $M$ and $\tilde M$ to \eqref{eq:scanalyze}. Then we rewrite $M-F(M)-(\tilde M-\tilde F(M))=0$ using \eqref{eq:Frep} as 
\begin{equation} \label{eq:uniq}
\begin{aligned}
   & M- \tilde{M} - (M- \tilde{M}) \int_{\mathbb{R}} \frac{x }{(M x +z)(\tilde{M}x+z)}\d\rho_{\Phi^N}(x)=0\\
   & (M - \tilde{M})\left[ 1 - \int_{\mathbb{R}} \frac{x }{(Mx+z)(\tilde{M}x +z)}\d\rho_{\Phi^N}(x)\right]=0.
\end{aligned}
\end{equation}
Now, observe that by \eqref{eq:Imagprtbnd},
\begin{equation}
    \left \vert\int_{\mathbb{R}} \frac{x}{(Tx+z)(\tilde{T}x +z)}\d\rho_{\Phi^N}(x) \right \vert \le \left(\int_{\mathbb{R}} \frac{x }{|Tx +z|^2}\d\rho_{\Phi^N}(x) \right)^{1/2} \left( \int_{\mathbb{R}} \frac{x }{|\tilde{T}x+z|^2}\d\rho_{\Phi^N}(x)\right)^{1/2}< 1.
\end{equation}
This proves $M=\tilde M$ as desired.
\end{proof}

\subsection{Stability estimates}
The estimates used in the proof of the last lemma can be refined show stability of solutions to \eqref{eq:scanalyze}. Recall that $M_N(z)$ is the unique solution with positive imaginary part to 
\begin{equation} \label{eq:finaleq}
    M_N(z) = -\frac{1}{M_N(z)} m_{\Phi^N} \left( -\frac{z}{M_N(z)} \right).
\end{equation}

\begin{lm}[Stability] \label{lem:Stab}

Let $\Phi\in \{\Phi^N,\Phi\}$ and let $T_0$ solve
\begin{equation}
    T_0 = - \frac{1}{T_0} m_{\Phi}\left(-\frac{z}{T_0}\right).
\end{equation}
Moreover, let $T$ be a solution to the approximate self-consistent equation,
\begin{equation}
    T = - \frac{1}{T} m_{\Phi}\left(-\frac{z}{T}\right) + \mathcal{E},
\end{equation}
for some $\mathcal E\in \C$. 
Fix a parameter $\omega>0$ and assume we are at a point $z$ such that { $\textnormal{Im}[T_0(z)] \ge \omega$}. Then, there exist constants $\epsilon_{\omega}>0$ and $C_{\omega}$ depending only on $\omega$ such that if $|\mathcal{E}| \le \epsilon_{\omega}$ and $|T - T_0| \le \epsilon_{\omega}$, then we have,
\begin{equation}
    |T-T_0| \le C_{\omega} |\mathcal{E}|.
\end{equation}

\end{lm}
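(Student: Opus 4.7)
The plan is to linearize the two self-consistent equations around $T_0$ and invert the resulting scalar multiplier. With $F(w) := -\int (wx+z)^{-1}\, d\rho_\Phi(x)$ the equations read $T_0 = F(T_0)$ and $T = F(T) + \mathcal{E}$. Subtracting them and using the partial-fraction identity
\[
\frac{1}{T_0 x + z} - \frac{1}{Tx + z} = \frac{(T - T_0)\,x}{(Tx+z)(T_0 x + z)}
\]
gives the exact scalar relation
\[
(T - T_0)(1 - I) = \mathcal{E}, \qquad I := \int \frac{x}{(Tx+z)(T_0 x+z)}\, d\rho_\Phi(x).
\]
Thus the entire task reduces to proving a lower bound $|1 - I| \geq c_\omega > 0$, after which $|T-T_0| \leq c_\omega^{-1}|\mathcal{E}|$ follows immediately.

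Since $I$ depends Lipschitz-continuously on $T$ with $I \to F'(T_0) := \int x/(T_0 x + z)^2\,d\rho_\Phi$ as $T \to T_0$ (the Lipschitz constant coming from the uniform lower bound $|Tx+z|,|T_0 x+z| \geq \tfrac{\omega}{2} g_{\min}$ on $\supp \rho_\Phi$, valid once $\epsilon_\omega \leq \omega/4$), it suffices to establish the unperturbed version $|1 - F'(T_0)| \geq 2c_\omega$ and then shrink $\epsilon_\omega$ to absorb the $O(\epsilon_\omega)$ error. Writing $T_0 x + z = r(x)\,e^{i\theta(x)}$, one has
\[
\mathrm{Re}\,F'(T_0) = \int \frac{x \cos 2\theta(x)}{r(x)^2}\, d\rho_\Phi(x) \leq \Bigl(\sup_{x \in \supp \rho_\Phi} \cos 2\theta(x)\Bigr)\, J(T_0),
\]
where $J(w) := \int x/|wx+z|^2\,d\rho_\Phi$ and $J(T_0) \leq 1$ by taking imaginary parts in $T_0 = F(T_0)$ as in the derivation of \eqref{eq:Imagprtbnd}. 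By Lemma \ref{lm:PhiNest} the support of $\rho_\Phi$ is contained in $[g_{\min}, C]$, hence $\mathrm{Im}(T_0 x + z) = \mathrm{Im}\,T_0 \cdot x + \mathrm{Im}\,z \geq \omega g_{\min}$ on the support. Combined with the a priori estimate $|T_0| \leq (\omega g_{\min})^{-1}$ (immediate from $T_0 = F(T_0)$ and $|T_0 x + z|\geq \omega g_{\min}$) and the boundedness of $|z|$ in the bulk regime, this yields $|T_0 x + z| \leq M_\omega$ for a constant $M_\omega$ depending only on $\omega$. Therefore
\[
\sin\theta(x) \geq \frac{\omega g_{\min}}{M_\omega} =: s_\omega > 0, \qquad \sin(\pi - \theta(x)) \geq s_\omega,
\]
so $\cos 2\theta(x) \leq 1 - 2s_\omega^2$ pointwise on $\supp \rho_\Phi$. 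This gives $\mathrm{Re}\,F'(T_0) \leq 1 - 2s_\omega^2$ and hence $|1 - F'(T_0)| \geq \mathrm{Re}(1 - F'(T_0)) \geq 2s_\omega^2 =: 2c_\omega$.

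The main obstacle is that the naive modulus bound $|F'(T_0)| \leq J(T_0) \leq 1$ is not enough: the Cauchy--Schwarz-style gap $1 - J(T_0) = (\mathrm{Im}\,z/\mathrm{Im}\,T_0)\,K(T_0)$ with $K(w) := \int|wx+z|^{-2}d\rho_\Phi$ degenerates as $\mathrm{Im}\,z \downarrow 0$, so one cannot conclude that $|F'(T_0)|$ is bounded away from $1$ by a constant depending only on $\omega$. The decisive idea is to track the \emph{argument} of $(T_0 x + z)^2$, not just its modulus, as above. The two ingredients making this work are the hypothesis $\mathrm{Im}\,T_0 \geq \omega$, which forces the phase $\theta(x)$ uniformly away from $0$ and $\pi$, and the strict positivity $g_{\min} > 0$ of the lower endpoint of $\supp \rho_\Phi$ supplied by the admissibility assumption, which makes the lower bound on $\mathrm{Im}(T_0 x + z)$ independent of $\mathrm{Im}\,z$. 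This is exactly where Definition \ref{defn:admissible} feeds into the stability theory.
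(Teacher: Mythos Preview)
Your proof is correct and follows essentially the same route as the paper: both derive the exact identity $(T-T_0)(1-I)=\mathcal{E}$, replace $I$ by $F'(T_0)=\int x/(T_0x+z)^2\,d\rho_\Phi$ up to an $O(\epsilon_\omega)$ perturbation, and then bound $|1-F'(T_0)|$ from below via the polar decomposition $T_0x+z=r(x)e^{i\theta(x)}$ with $\theta(x)$ uniformly bounded away from $0$ and $\pi$. Your presentation is in fact slightly sharper than the paper's --- you extract the explicit bound $\mathrm{Re}(1-F'(T_0))\geq 2s_\omega^2$ via $\cos 2\theta\leq 1-2\sin^2\theta$, whereas the paper argues more qualitatively that the phase constraint $2\theta\in[2\theta_\omega,2\pi-2\theta_\omega]$ together with $J(T_0)<1$ forces a gap; one small caveat shared by both arguments is that the upper bound $|T_0x+z|\leq M_\omega$ tacitly uses boundedness of $|z|$, which is not literally part of the hypotheses but is harmless in the intended bulk application.
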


\begin{rmk}
The condition that { $\textnormal{Im}[T_0] \ge \omega$} means that we consider values of $z$ whose real part corresponds to the bulk of the spectrum. 
\end{rmk}

\begin{proof}
Following the manipulations of \eqref{eq:uniq} in the previous Lemma \ref{lem:uniq}, we can assert that 
\begin{equation}
    (T- T_0)\left(1- \int_{\mathbb{R}} \frac{x}{(Tx + z)(T_0 x +z)}\d\rho_{\Phi}(x)\right) = \mathcal{E}.
\end{equation}
We will write $Tx+z = T_0x +z + (T- T_0) x$ and expand the denominator. By Lemma \ref{lm:PhiNest}, we see that $|T_0 x +z| \ge  c \textnormal{Im}[T_0] \geq c\omega$ on the support of $\d\rho_{\Phi}$. 
Hence
\begin{equation}
\begin{aligned}
    &\left \vert  \int_{\mathbb{R}} \frac{x}{(Tx+z)(T_0x +z)}\d\rho_{\Phi}(x) -\int_{\mathbb{R}} \frac{x}{(T_0 x+z)^2}\d\rho_{\Phi}(x)\right \vert\\
     \le&  \int_{\mathbb{R}} \frac{x^2|T- T_0|\textnormal{d}x}{|T_0x+z|^2\|T_0x +z| - |(T-T_0)x\|} \d\rho_{\Phi}(x)\\
    \le&  \frac{C^2 \epsilon_\omega}{(c \omega )^2{ |c\omega-\epsilon_{\omega}|}}
\end{aligned}
\end{equation}
where $C$ is an upper bound on the support of $\d\rho_{\Phi}$. Writing $K(z)$ for this difference, we have shown that, for sufficiently small $\eps_\omega$, \begin{equation} \label{eq:error}
    (T - T_0)\left( 1 - \int_{\mathbb{R}} \frac{x }{(T_0x +z)^2}\d\rho_{{\Phi}}(x) - K(z) \right) = \mathcal{E}.
\end{equation}
where $|K(z)|\leq \frac{C^2 \epsilon_{\omega}}{{c^2 \omega^2(c\omega- \epsilon_\omega)}}$.

We claim that there exists $c_\omega$ such that $ 1- \int_{\mathbb{R}} \frac{x \rho_{{\Phi}}(x) \textnormal{d}x}{(T_0x+z)^2}>c_{\omega}$. The claim then follows by making $\eps_\omega$ sufficiently small, which we point out also makes $|K(z)|$ small.

Recall that from looking at the imaginary parts of the self-consistent equation, we have the relation,
\begin{equation}
    \textnormal{Im}[T_0] \left(1 - \int_{\mathbb{R}} \frac{x}{|T_0x +z|^2} \d\rho_{\Phi}(x)\right) = \textnormal{Im}[z] \int_{\mathbb{R}} \frac{1 }{|T_0 x+z|^2}\d\rho_{\Phi}(x),
\end{equation}
which implies that $\int_{\mathbb{R}} \frac{x}{|T_0x +z|^2} \d\rho_{\Phi}(x)< 1$. By using the fact that $T_0$ has strictly positive imaginary part $> \omega$, we can deduce that there is an even bigger gap between $1$ and $\int_{\mathbb{R}} \frac{x  }{(T_0x +z)^2}\d\rho_{\Phi}(x)$.

Notice that in the support of $\d\rho_{{\Phi}}$ we know that $\textnormal{Im}[T_0x +z] \ge c \omega$ and $|T_0 x +z| \le C(z)$ for some constant $C(z)>0$. When writing $T_0 x +z = |T_0x+z| e^{i \theta(x,z)}$ in polar coordinates, we find that $\sin \theta > \frac{c \omega}{C(z)}$.  In particular, there is some $\theta_\omega$ such that $\theta_\omega \le \theta (x,z)\le \pi - \theta_\omega$. We have
\begin{equation}
\frac{1}{(T_0x+z)^2} = \frac{e^{-2 i \theta(x,z)}}{|T_0x +z|^2}.
\end{equation}
By integrating this over $x$, we obtain
\begin{equation}
    \int_{\mathbb{R}} \frac{x } {(T_0 x+z)^2} \d\rho_{\Phi}(x)= \int_{\mathbb{R}}e^{-2i\theta(x,z)} \frac{x}{|T_0 x +z|^2}\d\rho_{\Phi}(x) 
\end{equation}
Recalling that $\int_{\mathbb{R}} \frac{x \d\rho_{{\Phi}}(x)}{|T_0 x +z|^2} <1$ and $2 \theta_\omega \le 2\Theta \le 2 \pi - 2 \theta_\omega$, this relation implies that there is a strictly positive gap between
$1$ and $\int_{\mathbb{R}} \frac{x \rho_{{\Phi}}(x) \textnormal{d}x} {(T_0 x+z)^2}$ that depends only on $\omega$.

Thus, for sufficiently small $\epsilon_{\omega}$, we know that there exists some constant $C_{\omega}^{-1}$
\begin{equation}
   \left \vert 1 - \int_{\mathbb{R}} \frac{x }{(T_0x +z)^2} \d\rho_{{\Phi}}(x) - K \right\vert> C_{\omega}^{-1}.
\end{equation}
Considering \eqref{eq:error}, this implies that $|T - T_0| \le C_{\omega}|\mathcal{E}|$.
\end{proof}

\subsection{Comparison to the limiting self-consistent equation}
Recall that $m_\infty(z)$ is the unique solution of the limiting self-consistent equation \eqref{eq:sc}. Here we prove that it is close to $M_N(z)$, the solution of the auxiliary self-consistent equation \eqref{eq:scanalyze} that was analyzed in the preceding subsections

We can combine our discussion in the following lemma,
\begin{lm} \label{lem:Self-Consist3}
Assume that $\textnormal{Im}[M_n(z)] \ge \omega$.
Then
\begin{equation}
    |M_N(z) -m_{\infty}(z)| \le C \frac{{(\log N)^{12}}}{N}.
    \end{equation}
\end{lm}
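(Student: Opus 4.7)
The plan is to apply the stability estimate (Lemma~\ref{lem:Stab}) with reference equation the one satisfied by $m_\infty$. Rewriting \eqref{eq:scanalyze} as
\begin{equation*}
M_N = -\frac{1}{M_N}m_\Phi\bigl(-z/M_N\bigr) + \mathcal{E}, \qquad \mathcal{E} := \frac{1}{M_N}\bigl[m_\Phi(-z/M_N) - m_{\Phi^N}(-z/M_N)\bigr],
\end{equation*}
expresses $M_N$ as an approximate solution to the limiting equation. Since $\textnormal{Im}(M_N)\geq \omega$ gives $|M_N|\geq \omega$, Lemma~\ref{lem:Stab} will yield $|M_N - m_\infty|\leq C_\omega |\mathcal{E}|$ once we verify $\textnormal{Im}(m_\infty)\geq \omega'>0$ (true in the bulk by Proposition~\ref{prop:rhoinfty}) and the a priori smallness $|M_N - m_\infty|\leq \eps_\omega$ (obtained by a standard continuity argument in $z$ initiated at large $\textnormal{Im}(z)$, where both functions approximate $-1/z$). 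The core of the proof then reduces to establishing $|m_\Phi(w) - m_{\Phi^N}(w)|\leq C(\log N)^{12}/N$ at $w = -z/M_N$.

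To estimate this difference, I plan to interpose the $N\times N$ circulant matrix $C^N$ whose band matches that of $\Phi^N$: set $C^N_{i,j}=\phi(\ell)$ where $\ell\in\{i-j,\,i-j\pm N\}$ satisfies $|\ell|\leq (\log N)^6$. Then $C^N$ is diagonalized by the discrete Fourier transform with eigenvalues $\lambda_k = \sum_{|j|\leq (\log N)^6}\phi(j)e^{2\pi i j k/N}$, $k=0,\ldots,N-1$, while the difference $\Phi^N-C^N$ is supported only in two $(\log N)^6\times(\log N)^6$ off-diagonal corners, so $\textnormal{rank}(\Phi^N-C^N)\leq 2(\log N)^6$. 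The bound $|\phi(j)|\leq C\cdot 4^{-j}$ (a consequence of $|c_k|\leq C/k^2$ for $f\in C^2$, combined with the dyadic structure of \eqref{eq:phidefn}) implies $|\lambda_k - g_f(k/N)|$ is super-polynomially small; hence $m_{C^N}(w)=\frac{1}{N}\sum_k(\lambda_k-w)^{-1}$ is a Riemann sum for $m_\Phi(w)=\int_0^1(g_f(x)-w)^{-1}\d x$, with error $O(1/N)$ by the analyticity of $g_f$ (its Fourier coefficients $\phi(j)$ decay geometrically) and a uniform lower bound on $\textnormal{dist}(w,[g_{\min},C])$.

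The rank correction is then handled via the resolvent identity combined with the trace bound $|\textnormal{Tr}(AB)|\leq \textnormal{rank}(B)\|A\|\|B\|$, yielding
\begin{equation*}
\bigl|m_{\Phi^N}(w)-m_{C^N}(w)\bigr| \leq \frac{2(\log N)^6}{N}\bigl\|(\Phi^N-w)^{-1}\bigr\|\,\bigl\|\Phi^N-C^N\bigr\|\,\bigl\|(C^N-w)^{-1}\bigr\|.
\end{equation*}

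The principal obstacle is establishing the uniform resolvent bound $\bigl\|(\Phi^N-w)^{-1}\bigr\|,\bigl\|(C^N-w)^{-1}\bigr\|\leq C(\omega)$ at the moving point $w=-z/M_N$. Both matrices have real spectrum contained in $[g_{\min},C]$ (Lemma~\ref{lm:PhiNest} for $\Phi^N$; the circulant inherits its spectrum from the truncated Fourier symbol, which differs from $g_f$ by super-polynomially small error), so it suffices to show $\textnormal{dist}(w,[g_{\min},C])\geq c(\omega)>0$. Writing $|w-x|=|z+M_N x|/|M_N|$, the same device as in the proof of Lemma~\ref{lem:Stab} gives $|z+M_N x|\geq \textnormal{Im}(z+M_N x)=\textnormal{Im}(z)+x\,\textnormal{Im}(M_N)\geq g_{\min}\omega$ on $[g_{\min},C]$, and the self-consistent equation forces $|M_N|\leq 1/(g_{\min}\omega)$ via the integral representation $M_N=-\int(M_N x+z)^{-1}\d\rho_{\Phi^N}(x)$. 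Combining these, $|w-x|\geq g_{\min}^2\omega^2$ uniformly, yielding the resolvent bounds. Assembling the $O((\log N)^6/N)$ rank correction with the $O(1/N)$ Riemann error and absorbing the uniform constants into an additional polylogarithmic factor gives the claimed bound on $|\mathcal{E}|$, and invoking stability concludes the proof.
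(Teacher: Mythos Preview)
Your proposal is correct and follows the same overall architecture as the paper: introduce the circulant matrix obtained by wrapping the band of $\Phi^N$ around, exploit that $\Phi^N$ and the circulant differ by a matrix of rank $O((\log N)^6)$, compare the circulant's explicit eigenvalues to the symbol $g_f$ via a Riemann-sum argument, and close with the stability lemma. The one substantive technical difference is how the finite-rank perturbation is converted into a Stieltjes-transform bound. The paper orders the eigenvalues of both matrices, invokes Cauchy interlacing (shifted by at most $2(\log N)^6$ positions), and combines this with the $O((\log N)^6/N)$ gap between consecutive ordered circulant eigenvalues to get $|e_{\pi(j)}-\tilde e_{\pi(j)}|\le C(\log N)^{12}/N$ for most $j$; this is where the exponent $12$ originates. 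Your route via the resolvent identity and the trace inequality $|\mathrm{Tr}(AB)|\le \mathrm{rank}(B)\|A\|\|B\|$ is cleaner, avoids eigenvalue ordering entirely, and---once one notes that $\|\Phi^N-C^N\|=O(1)$ because the corner entries $\phi(\ell)$ are summable---actually yields the slightly stronger bound $O((\log N)^6/N)$. Both arguments rely on the same uniform lower bound $\mathrm{dist}(-z/M_N,\mathrm{spec}\,\Phi^N)\ge c(\omega)$, which you correctly extract from $\mathrm{Im}(M_N)\ge\omega$ and the positivity of $\Phi^N$, exactly as in the paper's proof of Proposition~\ref{prop:errorprop}.
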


The proof idea is that, while the $N \times N$ matrix $\Phi^N$ does not have an explicitly computable eigenvalue distribution, a logarithmic-rank perturbation will. Since $\Phi^N$ was a band matrix with band size $(\log N)^6$, we can extend the band by wrapping around the other side of matrix, as if it were a torus (such matrices are called circulant matrices) and then explicitly compute the spectrum.

\be{proof}
We define the circulant matrix
\begin{equation}
    (\tilde{\Phi}^N)_{i,j} = \phi(i-j),\qquad \textnormal{if }|i-j \mod N|\leq (\log N)^6.
\end{equation}
Note that $\tilde{\Phi}^N$ is a Hermitian matrix and a rank $2(\log N)^6$ perturbation of $\Phi^N$. By taking the finite Fourier series, we see that $\tilde{\Phi}^N$ has eigenvalues explicitly given by 
$$
\tilde{e}_k:= \sum_{j=-(\log N)^6} ^{(\log N)^6} \phi(j) e^{2\pi i \frac{k}{N}j}
$$
 where $k$ can take any integer value between $0$ and $N-1$. We order these eigenvalues as
\begin{equation} \label{eq:ordpert}
    \tilde{e}_{\pi(1)} \le\tilde{e}_{\pi(2)} \le \tilde{e}_{\pi(3)} \le \ldots \le \tilde{e}_{\pi(n)}
\end{equation}
for an appropriate permutation $\pi\in S_N$.

Observe the following inequality for some constant $C$,
\begin{equation}
    |\tilde{e}_{k} - \tilde{e}_{k+1}| \le \sum_{j=-(\log N)^6}^{(\log N)^6} |\phi(j)| |e^{2\pi i \frac{j}{N}} -1| \le C \sum_{j=-(\log N)^6}^{(\log N)^6} |\phi(j)| \frac{(\log N)^6}{N}.
\end{equation}
This inequality implies that $|\tilde{e}_{\pi(k)} - \tilde{e}_{\pi(k+1)}| \le C\frac{(\log N)^6}{N}$ for a potentially different constant $C$.

If we now denote the eigenvalues of $\Phi^N$ by
\begin{equation} \label{eq:ordeig}
    e_{\pi(1)} \le e_{\pi(2)} \le e_{\pi(3)} \le \ldots \le e_{\pi(n)},
\end{equation}
then we see that by interlacing (since the difference between $\Phi^N$ and $\tilde{\Phi}^N$ is a rank $2(\log N)^6$ matrix), we have 
\begin{equation} \label{eq:inter}
     \tilde{e}_{\pi(j - 2(\log N)^6)}\le e_{\pi(j)} \le \tilde{e}_{\pi(j +2(\log N)^6)}
\end{equation}
Moreover, this implies that for $ 2(\log N)^6 < j < N - 2(\log N)^6$. we have,
\begin{equation}
    |e_{\pi(j)} - \tilde{e}_{\pi(j)}| \le \frac{C (\log N)^{12}}{N}.
\end{equation}

{ I will assume in what follows that you want to show that $M_N(z)$ is an approximate solution to the limiting self-consistent equation \eqref{eq:sc} instead of \eqref{eq:scanalyze}, as it was previously}
The idea is now to use these estimates to show that $T\equiv M_N(z)$ is an approximate solution to { \eqref{eq:sc} }and conclude by stability. We start from

\begin{equation}
\begin{aligned}
  \l|T-\frac{1}{T}{m_{\Phi}}\l(-\frac{z}{T}\r)\r|
  =&    \left|\frac{1}{T} m_{\Phi^N}\l(-\frac{z}{T}\r) - \frac{1}{T} { m_{\Phi}}\l(-\frac{z}{T}\r) \right|\\
     =& \left|\sum_{j=1}^N \left[\frac{1}{N} \frac{1}{T e_{\pi(j)} +z } - \int_{\frac{\pi(j)}{N}}^{\frac{\pi(j)+1}{N}}   \frac{\textnormal{d}x}{T \sum_{j=-\infty}^{\infty} \phi(j) e^{2\pi i j x} + z}\right]\right|\\
     \le& \sum_{j=1}^N \int_{\frac{\pi(j)}{N}}^{\frac{\pi(j)+1}{N}} \left| - \frac{1}{T e_{\pi(j)} +z} + \frac{1}{T \sum_{j=-\infty}^{\infty} \phi(j) e^{2\pi i j x} +z}\right| \textnormal{d}x\\
     \le&  C \sum_{j=1}^N \int_{\frac{\pi(j)}{N}}^{\frac{\pi(j)+1}{N}} \l|e_{\pi(j)} - \sum_{j=-\infty}^{\infty} \phi(j) e^{2\pi i j x} \r|.
\end{aligned}
\end{equation}
We estimate the last term by using that, for $(\log N)^6 < j < N - (\log N)^6$ and $x$ as above,
\begin{equation}
\begin{aligned}
    &\l|e_{\pi(j)} - \sum_{j=-\infty}^{\infty} \phi(j) e^{2\pi i j x}\r|\\
     \le& |e_{\pi(j)} - \tilde{e}_{\pi(j)}| + \l|\tilde{e}_{\pi(j)} - \sum_{j=-\infty}^{\infty} \phi(j) e^{2\pi i j x} \r| \\
    \le& |e_{\pi(j)} - \tilde{e}_{\pi(j)}| + \sum_{j=-(\log N)^6}^{(\log N)^6} |\phi(j)| |e^{2\pi i j x} - e^{2 \pi i j \frac{\pi(j)}{N}}| +\sum_{j \in \mathbb{Z}, j \not \in [-(\log N)^6, (\log N)^6]} |\phi(j)|\\
    \le& C \frac{{(\log N)^{12}}}{N} + \frac{C(\log N)^6}{N} \sum_{j=-(\log N)^6}^{(\log N)^6}|\phi(j)| + \sum_{j \in \mathbb{Z}, j \not \in [-(\log N)^6, (\log N)^6]} |\phi(j)|.
\end{aligned}
\end{equation}
The other eigenvalues with $j\leq (\log N)^6$ or  $j\geq N - (\log N)^6$ are bounded by uniform a priori bounds. It follows that
\begin{equation} 
    T = - \frac{1}{T} { m_{\Phi}}\l(-\frac{z}{T}\r) + \mathcal{E},
\end{equation}
with the error estimate
\begin{equation}
    |\mathcal{E}| \le C \frac{{(\log N)^{12}}}{N} + C \sum_{j \in \mathbb{Z}, j \not \in [-(\log N)^6, (\log N)^6]}|\phi(j)|.
\end{equation}
By applying our stability result, Lemma \ref{lem:Stab}, we see that for $N$ sufficiently large,
\begin{equation}
    |M_N(z) -m_{\infty}(z)| \le C \frac{(\log N)^6}{N} + C \sum_{j \in \mathbb{Z}, j \not \in [-(\log N)^6, (\log N)^6]}|\phi(j)|.
\end{equation}
The claim now follows by estimating the second term, using Definition \eqref{eq:phidefn} of $\phi$, the fact that $f\in C^2(\mathbb T)$ and the mean-value theorem.
\end{proof}

\subsection{Application to Green's function estimates}
In this subsection, we show that we can recover the Green's function block $G^2$ from solutions $m_\infty(z)$ to the limiting scalar self-consistent equation \eqref{eq:sc}.  { The exact same method will give us bounds on the entries of $G^3$, $G^4$, and the off-diagonal entries of $G^1$,}

\begin{lm} \label{lem:Self-Consist2}
Let $(T^1,G^2)$ be a solution to the system of equations \eqref{eq:SystErr}. Assume in addition to this that $|T^1 - m_\infty(z)| \le \mathcal{E}$ where $\mathcal{E}=o(1)$ as $N\to\infty$.  { Also recall the other matrices $G^1$,$G^3$ and $G^4$ from \eqref{eq:equationset}.  In this proof we will let $T_0(z) \equiv m_{\infty}(z)$, the solution to the limiting self-consistent equation \eqref{eq:sc}.}
Then
 \begin{equation}
\begin{aligned}
&\| G^1- T_0(z) \mathbbm{1}\| \le C (\log N)^6\|E_1\|_{\infty} + C (\log N)^{12} \mathcal{E}.\\
  &  \| G^2 + (\Phi^N { T_0(z)} +z)^{-1}\|_{\infty} \le C (\log N)^6\|E_2\|_{\infty} + C (\log N)^{12} \mathcal{E}.\\
&	\| G^3\|_{\infty} \le C (\log N)^6\|E_3\|_{\infty} + C (\log N)^{12} \mathcal{E}.\\
&	\|G^4\|_{\infty} \le  C (\log N)^6\|E_4\|_{\infty} + C (\log N)^{12} \mathcal{E}.\\
\end{aligned}
\end{equation}

\end{lm}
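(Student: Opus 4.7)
The plan is to solve each of the four equations in \eqref{eq:equationset} explicitly for the corresponding block $G^i$, and then compare to the limiting objects by replacing the scalar $T^1 = \tfrac{1}{N}\mathrm{Tr}[G^1]$ with $T_0(z) = m_\infty(z)$. The main algebraic tool is the resolvent identity $A^{-1} - B^{-1} = A^{-1}(B-A)B^{-1}$ applied to the pair $A = T_0\Phi^N + z$ and $B = T^1\Phi^N + z$, while the main quantitative tool is Lemma \ref{lem:BandInv}, which controls entries of $(T\Phi^N+z)^{-1}$ via exponential decay outside a band of width $W=(\log N)^6$ and thus gives polylogarithmic bounds on row sums.

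\textbf{Bound on $G^2$.} Invert the second equation of \eqref{eq:equationset} to write $G^2 = -(T^1\Phi^N+z)^{-1}(\mathbbm{1}+E_2)$ (matching the sign convention of the derivation in Proposition \ref{prop:sc}), and decompose
\[
G^2 + (T_0\Phi^N+z)^{-1} = \bigl[(T_0\Phi^N+z)^{-1}-(T^1\Phi^N+z)^{-1}\bigr] - (T^1\Phi^N+z)^{-1}E_2.
\]
Apply the resolvent identity to the bracket, producing a factor of $(T^1-T_0)$ bounded by $\mathcal{E}$. Bound the entrywise $\ell^\infty$ norm of each resulting product by combining Lemma \ref{lem:BandInv} with the band structure of $\Phi^N$. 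Here I would exploit the algebraic simplification $(T_0\Phi^N+z)^{-1}\Phi^N = T_0^{-1}\bigl(I - z(T_0\Phi^N+z)^{-1}\bigr)$ to keep the polylogarithmic exponents in check.

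\textbf{Bound on $G^1$.} The first equation of \eqref{eq:equationset} yields $G^1 = \beta^{-1}(\mathbbm{1}+E_1)$ with scalar $\beta = -\tfrac{1}{N}\mathrm{Tr}[G^2\Phi^N]-z$. Plug in the $G^2$ bound from the previous step, take the normalized trace, and use the auxiliary self-consistent equation satisfied by $T_0$, which implies $m_{\Phi^N}(-z/T_0) \approx -T_0^2$ (up to $O((\log N)^{12}/N)$ by Lemma \ref{lem:Self-Consist3}). A short computation shows $\beta \approx T_0^{-1}$, from which $G^1 \approx T_0\mathbbm{1}$ follows with the stated error. The $(\log N)^{12}\mathcal{E}$ contribution absorbs both the polylogarithmic cost from the $G^2$ step and the stability correction in approximating $\beta$.

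\textbf{Bounds on $G^3$ and $G^4$.} These are easier since the right-hand sides of the last two equations in \eqref{eq:equationset} contain no identity term—only the errors $E_3, E_4$. Solving directly gives expressions of the form $(T^1\Phi^N+z)^{-1}E_3$ and $\beta^{-1}E_4$, and the same polylogarithmic $\ell^\infty$ estimates as above immediately yield the claimed bounds, with no $\mathcal{E}$-dependent piece required other than what comes from replacing $T^1$ by $T_0$.

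\textbf{Main obstacle.} The heart of the argument is careful bookkeeping of the polylogarithmic factors through repeated $\ell^\infty$-into-$\ell^\infty$ estimates of products of matrices that are either $(\log N)^6$-banded ($\Phi^N$) or exponentially decaying outside such a band ($(T\Phi^N+z)^{-1}$). One must be disciplined to route the reductions so that row sums of inverses ($\sim (\log N)^{12}$) are used only where necessary—this is where the algebraic identity $(T_0\Phi^N+z)^{-1}\Phi^N = T_0^{-1}\bigl(I - z(T_0\Phi^N+z)^{-1}\bigr)$ is crucial—otherwise the naive estimate would produce a worse power of $\log N$ than the $(\log N)^6$ and $(\log N)^{12}$ claimed in the statement.
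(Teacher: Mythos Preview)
Your proposal is correct and follows essentially the same route as the paper: solve each block equation for $G^i$, apply the resolvent identity $A^{-1}-B^{-1}=A^{-1}(B-A)B^{-1}$ with $A=T_0\Phi^N+z$, $B=T^1\Phi^N+z$, and then control all entrywise norms via Lemma \ref{lem:BandInv}. The paper only writes out the $G^2$ case in detail and declares the others ``similar''; your more explicit treatment of $G^1$ via $\beta^{-1}\approx T_0$ and the self-consistent equation is exactly the argument implicitly intended there. Your algebraic identity $(T_0\Phi^N+z)^{-1}\Phi^N = T_0^{-1}\bigl(I - z(T_0\Phi^N+z)^{-1}\bigr)$ is a clean way to avoid an extra $(\log N)^6$ factor from the sandwiched $\Phi^N$ in the resolvent expansion---the paper's displayed resolvent identity actually omits this $\Phi^N$ factor and bounds the product of the two bare inverses directly, so your device tidies up a small gap in the written proof while arriving at the same estimate.
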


\begin{proof}

{ We will drop the argument $z$ from $T_1(z)$ and $T_0(z)$ when the context is obvious. It suffices to prove the result for $G^2$. All other blocks are similar.}

It follows from \eqref{eq:SystErr} that
 \begin{equation}
     G^2 = -(\Phi^N T_1 + z)^{-1} - (\Phi^N T_1 + z)^{-1} E_2.
 \end{equation}
 
We {recall }$T_0\equiv m_\infty(z)$. We expand the first resolvent using the resolvent identity and find
 \begin{equation}
     (\Phi^N T_1 +z)^{-1} = (\Phi^N T_0 +z)^{-1} - (\Phi^N T_0 +z)^{-1}(T_1 - T_0) (\Phi^N T_1 +z)^{-1}. 
 \end{equation}
We bound the entries of $(\Phi^N T_0 +z)^{-1}$ and $(\Phi^N T_1 +z)^{-1}$ by Lemma \ref{lem:BandInv} since they are inverses of a band matrix.  The function $\kappa$ is bounded as in the proof of Lemma \ref{lem:BandInv}. Hence , we  obtain for every $1\leq i,j\leq N$\begin{equation}
 \begin{aligned}
     |((\Phi^N T_0+z)^{-1}(\Phi^N T_1 +z)^{-1})_{ij}| &\le \sum_{k=1}^N |[(\Phi^N T_0 +z)^{-1}]_{ik}|
     |[(\Phi^N T_1+z)^{-1}]_{kj}|\\
     & \le C \sum_{k=1}^N \alpha_0^{(|i-k|-(\log N)^6)_{+}} (\alpha^1)^{(|j-k|-(\log N)^6)_{+}}\\
     & \le C \sum_{k=-\infty}^{\infty} \alpha_0^{(|i-k| - (\log N)^6)_{+}} \sum_{k=-\infty}^{\infty} (\alpha^1)^{|j-k|- (\log N)^6}\\
     & \le C \l((\log N)^6 + \frac{1}{1-\alpha_0}\r)\l((\log N)^6 + \frac{1}{1-\alpha^1}\r) { (\log N)^{0}},
 \end{aligned}
 \end{equation}
 Here, $\alpha_0$ and $\alpha^1$ are the corresponding constants from the application of Lemma \ref{lem:BandInv} to the matrices $(\Phi^N T_0 +z)$ and $(\Phi^N T_1 +z)$, respectively. The assumption that $\textnormal{Im}[T_0] \ge \omega$ clearly shows that $\| \Phi^N T_0 +z\|$ is bounded above and below and, thus, we can assert that $\alpha_0 = 1+ \textnormal{O}((\log N)^{-6})$ and the result of infinite summation $(1-\alpha_0)^{-1}= \textnormal{O}((\log N)^6)$.  
 
 In addition to this, we have already assumed that $|T_0- T_1|= \mathcal{E} =o(1)$, so clearly we have the same upper and lower bounds on $\|\Phi^N T_1 +z\|$, and we may still assert that $\alpha^1 = 1+ \textnormal{O}((\log N)^6)$.  This shows that we have the following estimate,
 \begin{equation}
     \| G^2 + (\Phi^N T_0 +z)^{-1}\|_{\infty} \le  C(\log N)^6\|E_2\|_{\infty} + C (\log N)^{12} \mathcal{E}.
 \end{equation}
\end{proof}

\section{Establishing the Local Law} \label{sec:locallaw}

Once we have derived the error estimates for our self-consistent equation along with the stability estimates, we can prove a local law via a standard continuity approach. 

\subsection{The Global Law}

The first goal is to establish the local law at large scales, e.g. a global law. We will establish the following theorem,
\begin{thm}
Let $\mathcal{M}$ be the exact solution to the matrix  of self-consistent equations \eqref{eq:firsteq}, let  { $m_{\infty}(z)$} be the exact solution to the infinite self-consistent equation
\eqref{eq:sc}, and let $G$ be the Green's function of the matrix {$Y$} as in section \ref{sec:self-consist}. Let $\mathcal{D}$ be a compact subset of $\mathbb{C}^+$. Then, for $N_{\mathcal{D},\nu,p}$ sufficiently large depending on $\mathcal{D},\nu$ and $p$, we can establish the following bound for $N \ge N_{\mathcal{D},\nu,p}$.
\begin{equation}\label{eq:global}
 \begin{aligned}
 &\mathbb{P}\left(\sup_{z \in \mathcal{D},i,j} |G_{ij} -\mathcal{ M}_{ij}| \ge \frac{N^{\nu}}{\sqrt{N\eta}}\right) \le N^{-\nu p},\\
 & \mathbb{P}\left(\sup_{z \in \mathcal{D}} |\frac{1}{2N} \mathrm{Tr} G(z) - {m_{\infty}(z)}| \ge \frac{N^{\nu}}{\sqrt{N\eta}}\right) \le N^{-\nu p}.
\end{aligned}
\end{equation}
\end{thm}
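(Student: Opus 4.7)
The plan is to combine the self-consistent equation derivations from Sections \ref{sec:self-consist}--\ref{sec:matrissc} with the stability estimates from Section \ref{sec:analysis}, using the fact that on a compact subset of $\mathbb{C}^+$ the a priori bounds $\Gamma, \gamma$ are deterministically $O(1)$, and then extending from a single $z$ to uniform control by a net argument.

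First I would exploit that on a compact $\mathcal{D} \subset \mathbb{C}^+$ the imaginary part satisfies $\eta = \mathrm{Im}\, z \geq \eta_0 > 0$ uniformly, so the trivial bound $\|G(z)\| \leq 1/\eta_0$ yields the deterministic estimates $\Gamma, \gamma \leq C_{\mathcal{D}}$. Moreover, $\mathrm{Im}\, m_\infty(z)$ is continuous and strictly positive on $\mathcal{D}$, so it is bounded below by some $\omega_{\mathcal{D}} > 0$, making the hypothesis of the stability Lemma \ref{lem:Stab} satisfied uniformly. Fixing $z \in \mathcal{D}$ and any $\sigma > 0$, Lemma \ref{lm:scGF} combined with the deterministic bounds above gives
\begin{equation}
-\frac{1}{N}\sum_{k,l,m} G_{ik}\,\xi_{kljm}\,G_{lm} - z G_{ij} = \delta_{ij} + \mathcal{O}_{\sigma, p}\!\left(\frac{N^{2\sigma}}{\sqrt{N\eta}}\right)
\end{equation}
with probability at least $1 - N^{-\sigma p}$, and then Proposition \ref{prop:sc} delivers the scalar self-consistent equation with an error of the same order.

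Second, I would invoke the stability Lemma \ref{lem:Stab} together with Lemma \ref{lem:Self-Consist3} to conclude
\begin{equation}
\left|\tfrac{1}{2N}\,\mathrm{Tr}\, G(z) - m_\infty(z)\right| \leq C_{\mathcal{D}}\left(\frac{N^{2\sigma}}{\sqrt{N\eta}} + \frac{(\log N)^{12}}{N}\right),
\end{equation}
after which Lemma \ref{lem:Self-Consist2} promotes this scalar control into the entrywise bound $\|G - \mathcal{M}\|_{\infty} \leq C_{\mathcal{D}} (\log N)^{12} N^{2\sigma}/\sqrt{N\eta}$. Choosing $\sigma < \nu/4$ absorbs the logarithmic factors into $N^\nu$ and produces the claimed rate $N^\nu/\sqrt{N\eta}$ for each fixed $z$.

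Third, I would upgrade pointwise to uniform control by a standard net argument: introduce a grid $\mathcal{G}_N \subset \mathcal{D}$ of mesh size $N^{-10}$, so that $|\mathcal{G}_N| \leq C_{\mathcal{D}} N^{20}$. Because $\partial_z G(z) = G(z)^2$ satisfies $\|\partial_z G(z)\| \leq 1/\eta_0^2$ on $\mathcal{D}$, and $\mathcal{M}$ is analytic and similarly Lipschitz in $z$, interpolation between adjacent grid points contributes an error of order $N^{-10}$, negligible compared with $N^\nu/\sqrt{N\eta}$. A union bound of the pointwise estimate over $\mathcal{G}_N$ completes the argument, at the harmless cost of replacing $\sigma p$ by $\sigma p - 20$, which is absorbed by taking $p = p(\nu)$ sufficiently large. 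The main obstacle is really one of bookkeeping: one must check that the various intermediate error estimates, in particular those of Lemma \ref{lm:scGF} and Proposition \ref{prop:sc} which carry several powers of $\Gamma$ and $\gamma$, collapse cleanly once the deterministic bounds $\Gamma, \gamma = \mathcal{O}(1)$ at the global scale are plugged in. The genuine difficulty, namely running a bootstrap in which $\Gamma$ and $\gamma$ are themselves controlled only through the self-consistent equation, is postponed to the local scale where the stronger rate $N^\delta/(N\eta)$ of Theorem \ref{thm:auxLocalLaw} must be established down to $\eta \sim N^{-1+\delta}$.
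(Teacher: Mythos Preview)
Your overall architecture is the same as the paper's: use the deterministic smallness of $\Gamma,\gamma$ at the global scale to make the error in Lemma~\ref{lm:scGF} small, feed this into Proposition~\ref{prop:sc} and the stability Lemmas~\ref{lem:Stab}, \ref{lem:Self-Consist3}, \ref{lem:Self-Consist2}, and finish with a net argument. That part is fine.

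There is, however, a genuine gap. You assert that ``the trivial bound $\|G(z)\|\le 1/\eta_0$ yields the deterministic estimates $\Gamma,\gamma\le C_{\mathcal D}$.'' For $\Gamma$ this is correct, since individual entries are dominated by the operator norm. But recall from Definition~\ref{def:StCo} that
\[
\gamma \;=\; 1\vee \sup_{i}\sup_{I,J}\bigl\|(G^{(J)}_{I,I})^{-1}\bigr\|,
\]
the operator norm of the \emph{inverse} of a small submatrix of a reduced Green's function. An upper bound on $\|G\|$ gives an upper bound on $\|G^{(J)}_{I,I}\|$, not a lower bound on its smallest singular value; a priori $G^{(J)}_{I,I}$ could be nearly singular and $\gamma$ could be huge. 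This is precisely the point where the paper does extra work: it uses the Schur complement identity
\[
(G^{(J)}_{I,I})^{-1} \;=\; H_{I,I}-z+H_{I,K^c}\,G^{(K)}_{K^c,K^c}\,H_{K^c,I},\qquad K=I\cup J,
\]
and then bounds each piece directly ($\|H_{I,I}\|$ by its Frobenius norm, the quadratic term via $\|G^{(K)}\|\le\eta^{-1}$ and $\sum_{i\in I,k}|H_{ik}|^2\le C(\log N)^6$), obtaining $\gamma\le O(N^\sigma)$. Without this step the powers $\Gamma^5\gamma^3$ in the error of Lemma~\ref{lm:scGF} are uncontrolled and the rest of your argument does not get off the ground. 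Once you insert this Schur-complement computation, your proposal matches the paper's proof.
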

\begin{proof}
We see from Lemma \ref{lm:scGF} that we know that the matrix $G$ satisfies the self-consistent equation up to error of order given by $\frac{N^{2\sigma}\Gamma^5 \gamma^3}{\sqrt{N \eta}}$. From the results of {  Lemmas \ref{lem:Stab}, \ref{lem:Self-Consist2},  and \ref{lem:Self-Consist3}}, we know that we can derive the desired result \eqref{eq:global} as long as we know that $\Gamma, \gamma \lesssim 1$ for $z \in \mathcal{D}$. (Note that since $G$ and $\mathcal{M}$ are Lipschitz, we can derive the high probability bounds on a discrete grid that is of polynomial size and extend to the entire set $\mathcal{D}$ without too much loss in probability.) 

We have the deterministic bound that $\Gamma \le \frac{1}{\eta} $, which will be bounded by $1$ in our compact region $\mathcal{D}$.   It suffices to establish bounds on $\gamma$. By using the Schur complement formula, we know that
\begin{equation}
    (G^{(J)}_{I,I})^{-1}= H_{I,I} - z + H_{I, K^c} G_{K^c,K^c}^{(K)}H_{K^c, I}, 
\end{equation}
for $K= I \cup J$, which will be a subset of $[i - 2(\log N)^6, i + 2(\log N)^6]$.

We can attempt to estimate the operator norm of the right hand side. First, observe that we can bound the operator norm of $H_{I,I}$ by the Frobenius norm,
\begin{equation}
    \|H_{I,I}\| \le \sqrt{\sum_{a,b \in I}|H_{ab}|^2},
\end{equation}
which can be bounded by $O(\frac{N^{\sigma}}{\sqrt{N \eta}})$. This only uses the fact that $|H_{ab}|=\frac{O(1)}{\sqrt{N}}$ and we have at most $O((\log N)^{12})$ terms in the sum.

Finally, we can estimate,
\begin{equation}
    \| H_{I, K^c} G^{(K)}_{K^c K^c} H_{K^c I}\| \le \|H_{I,K^c}\| \|G^{(K)}_{K^c K^c}\| \|H_{K^c, I}\| \le \|G^{(K)}_{K^c,K^c}\| \sum_{i\in I,k \in K^c} |H_{ik}|^2.
\end{equation}

We know that $\|G^{(K)}_{K^c,K^c}\| \le \eta^{-1}$ and $\sum_{i\in I,k \in K^c} |H_{ik}|^2 \le O((\log N)^6)$. Combining our previous estimates by the triangle inequality, this shows that $\gamma \le O(N^{\sigma})$ in $\mathcal{D}$. This completes the proof.

\end{proof}

\subsection{Proving the Local Law}

As we have seen in the previous proof, it was necessary to get a bound on $\gamma$ and $\Gamma$. We will establish the local analogue first.
\begin{lm} \label{lem:BoundGamma}
Assume that 
\begin{equation} \label{eq:stabcond}
{\|G -\mathcal{ M}\|_{\infty}  \le N^{-\varepsilon}} ,
\end{equation}
for some parameter $\varepsilon >0$. Then, we have that
\begin{equation}
    \Gamma, \gamma \le O(1).
\end{equation}
\end{lm}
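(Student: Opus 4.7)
The bound $\Gamma = O(1)$ follows immediately from the triangle inequality: for any $1 \le i,j \le 2N$,
\begin{equation*}
|G_{ij}| \le |\mathcal{M}_{ij}| + \|G - \mathcal{M}\|_\infty \le \|\mathcal{M}\|_\infty + N^{-\varepsilon},
\end{equation*}
so it suffices to verify $\|\mathcal{M}\|_\infty = O(1)$ in the bulk. By the block form identified in Lemma~\ref{lem:Self-Consist2}, the top-left block of $\mathcal{M}$ is essentially $T_0(z)\mathbbm{1}_N$ with $|T_0|$ bounded in the bulk, and its bottom-right block is essentially $-(\Phi^N T_0(z) + z)^{-1}$, whose entries are $O(1)$ by Lemma~\ref{lem:BandInv} combined with Lemma~\ref{lm:PhiNest} and the bulk hypothesis $\textnormal{Im}[T_0] \ge \omega$; off-diagonal blocks are negligible by the same lemma.

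For $\gamma$, fix $i \in \{1,\ldots,2N\}$ and disjoint $I, J \subset [i - 2(\log N)^6, i + 2(\log N)^6] \cap \mathbb{Z}$; set $K = I \cup J$, so $|K| \le 4(\log N)^6$. The resolvent identity from Lemma~\ref{Lm:Resol} combined with the standard Schur-complement formula for inverses of block-partitioned matrices gives
\begin{equation*}
(G^{(J)}_{I,I})^{-1} = \bigl((G|_{K,K})^{-1}\bigr)\big|_{I,I},
\end{equation*}
so that $\|(G^{(J)}_{I,I})^{-1}\| \le \|(G|_{K,K})^{-1}\|$. The problem thus reduces to bounding $\|(G|_{K,K})^{-1}\|$ by $O(1)$.

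The crucial structural input is that $\textnormal{Im}[\mathcal{M}]$ is uniformly positive-definite in the bulk, a property inherited by every principal submatrix. The top block contributes $\textnormal{Im}[T_0]\, I \ge \omega\, I$. For the bottom block, set $M := \Phi^N T_0 + z$; a direct computation yields $\textnormal{Im}[-M^{-1}] = (M^*)^{-1}\,\textnormal{Im}[M]\, M^{-1}$, and since $\textnormal{Im}[M] = \textnormal{Im}[T_0]\,\Phi^N + \textnormal{Im}[z]\, I \ge (\omega g_{\min} + \textnormal{Im}[z])\, I$ by Lemma~\ref{lm:PhiNest} while $\|M\| = O(1)$, we obtain $\textnormal{Im}[-M^{-1}] \ge c\, I$ for some $c = c(\omega, g_{\min}) > 0$. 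Hence $\textnormal{Im}[\mathcal{M}|_{K,K}] \ge c'\, I_K$ and so $\|(\mathcal{M}|_{K,K})^{-1}\| \le 1/c' = O(1)$.

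Finally, the hypothesis $\|G - \mathcal{M}\|_\infty \le N^{-\varepsilon}$ combined with $|K| = O((\log N)^6)$ gives $\|G|_{K,K} - \mathcal{M}|_{K,K}\| \le |K|\, N^{-\varepsilon} = o(1)$. Perturbation around the invertible matrix $\mathcal{M}|_{K,K}$ then produces $\|(G|_{K,K})^{-1}\| \le 2/c' = O(1)$ for $N$ sufficiently large, completing the bound on $\gamma$. The main obstacle in this plan is the uniform positivity of $\textnormal{Im}[\mathcal{M}]$ on principal submatrices of polylogarithmic size, which is precisely the step that genuinely uses the admissibility of $f$ (through $g_{\min} > 0$) together with the bulk condition $\textnormal{Im}[T_0] \ge \omega$.
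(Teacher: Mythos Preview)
Your argument is correct and reaches the same conclusion, but by a more direct route than the paper.

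Both proofs handle $\Gamma$ identically. For $\gamma$, both ultimately rest on the Schur complement identity $(G^{(J)}_{I,I})^{-1} = ((G|_{K,K})^{-1})|_{I,I}$ with $K = I \cup J$; the paper writes this out via the resolvent expansion $G^{(I)}_{J,J} = G_{J,J} - G_{J,I}(G_{I,I})^{-1}G_{I,J}$. The genuine difference is how the $\mathcal{M}$-analogue is controlled. The paper introduces $\mathcal{M}^{(I)}_{J,J} := \mathcal{M}_{J,J} - \mathcal{M}_{J,I}(\mathcal{M}_{I,I})^{-1}\mathcal{M}_{I,J}$, notes that it satisfies the \emph{modified} matrix self-consistent equation $\mathcal{M}^{(I)}_{J,J} = (-z - \Xi^{(I)}(\mathcal{M}^{(I)}))^{-1}_{J,J}$ with the reduced covariance operator $\Xi^{(I)}$, and then extracts a lower bound on $\|\mathcal{M}^{(I)}_{J,J}\|$ from the covariance structure together with $\textnormal{Im}[\mathcal{M}] > 0$. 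You instead bound $\|(G^{(J)}_{I,I})^{-1}\| \le \|(G|_{K,K})^{-1}\|$, transfer to $\|(\mathcal{M}|_{K,K})^{-1}\|$ by perturbation, and then use the \emph{explicit} block form of $\mathcal{M}$ (top-left block $T_0\,\mathbbm{1}$, bottom-right block $-(\Phi^N T_0 + z)^{-1}$, zero off-diagonal blocks) together with Lemma~\ref{lm:PhiNest} to show $\textnormal{Im}[\mathcal{M}|_{K,K}] \ge c\,I$ directly. Your route is shorter and avoids introducing $\mathcal{M}^{(I)}$ and $\Xi^{(I)}$ altogether; the price is that it leans on the particular block structure of this model, whereas the paper's argument is phrased in a way that would extend to general correlated ensembles where $\mathcal{M}$ need not be so explicit.
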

\begin{proof}
    The explicit form of the solution of the self-consistent equation ensures that { $\|\mathcal{M}\|_{\infty} = O(1)$}. Clearly, the condition \eqref{eq:stabcond} would imply that $\Gamma = O(1)$ immediately. We need to do more work to establish the same result for $\gamma$. 
    
    As is usual, we let $J$ and $I$ be non-intersecting subsets of $[ i - 2(\log N)^6,\ldots, i + 2(\log N)^6]$ for some integer $i$.
    
    Recall the resolvent identity,
    \begin{equation}
        G_{J,J}^{(I)} = G_{J,J} -G_{J,I}(G_{I,I})^{-1} G_{I,J}.
    \end{equation}
    We will compare this to the corresponding quantity in $M$
\begin{equation}
    \mathcal{M}_{J,J}^{(I)}:= \mathcal{M}_{J,J} - \mathcal{ M}_{J,I} (\mathcal{M}_{I,I})^{-1} \mathcal{M}_{I,J} . 
\end{equation}
By using the fact that $\|G -\mathcal{M}\|_{\infty} \le N^{-\varepsilon}$ and the fact that $|J|, |I| \le 2(\log N)^6$ will show after some manipulation that $\|\mathcal{M}^{(I)}_{J,J} - G^{(I)}_{J,J}\|_{\infty} \le C (\log N)^{12} N^{-\varepsilon}$. With this $\| \cdot\|_{\infty}$ bound in hand, we can show by an application of the resolvent formula that $\|(\mathcal{M}^{(I)}_{J,J})^{-1}\| - \|(G^{(I)}_{J,J})^{-1}\| = O(1)$. It suffices to understand the operator norm of $\|(\mathcal{M}^{(I)}_{J,J})^{-1}\|$ in order to understand $\|(G^{(I)}_{J,J})^{-1}\|$.

We note that it is an algebraic fact that $\mathcal{M}_{J,J}^{(I)}$ satisfies the equation,
\begin{equation}
   \mathcal{ M}_{J,J}^{(I)}= (- z - \Xi^{(I)}(\mathcal{M}^{(I)}) )^{-1}_{J,J},
\end{equation}
where $\Xi^{(I)}$ is an operator like in \eqref{eq:defXi}, but with the covariance terms $\xi$ that involve any index of $(I)$ to be set to 0. This representation allows us to determine a lower bound on the norm of $\|\mathcal{M}_{J,J}^{(I)}\|$. 

\begin{equation}
    \|\mathcal{M}_{J,J}^{(I)}\| \ge \sup_{\|v\| = 1, v \in \mathbb{R}^{|I|}}\textnormal{Im}[v^* \mathcal{M}_{J,J}^{(I)} v] \ge \sup_{v}\frac{\textnormal{Im}[v^*\Xi^{(I)}(\mathcal{M}^{(I)}_{J,J}) v]}{\|z + \Xi^{(I)}(\mathcal{M})\|^2}.
\end{equation}
The covariance structure of $\Xi^{(I)}$ and the fact that $\textnormal{Im}(\mathcal{M})$ is positive Hermitian allows one to assert that $\textnormal{Im}[v^*\Xi^{(I)}(\mathcal{M}^{(I)}_{J,J}) v]$ is bounded below. This will show that  $\|\mathcal{M}^{(I)}_{J,J}\|$ is bounded below, so $\|(\mathcal{M}^{(I)}_{J,J})^{-1}\|$ is bounded above, as desired.
\end{proof}

Now, we have the necessary estimate in order to complete our proof of the local law.
\begin{lm}
Fix some constant $\omega>0$ and recall the solution $m_{\infty}(z)$ to the self-consistent equation from \eqref{eq:sc}.
Let $\mathcal{D}$ be a subset of $\mathbb{C}^+$ such that if $z \in \mathcal{D}$ then $\textnormal{Im}[m_\infty(z)] \ge \omega$.

Fix some parameters $\nu>0$, $\sigma>0$ and $p >\frac{100}{\sigma}$. We consider the subset $\mathcal{D}_\nu: \mathcal{D} \cap \{z: \textnormal{Im}[z] \ge N^{-1 + \nu}\}$. There is some $N_{\nu,\sigma,p}$ such that for $N \ge N_{\nu,\sigma,p}$ we could derive the following  probability estimate.  
\begin{equation} \label{eq:locallaws}
\begin{aligned}
   &\mathbb{P}\left(\sup_{z \in \mathcal{D}_{\nu}} \left|\frac{1}{2N} \mathrm{Tr} G(z) - { m_{\infty}(z)}\right| \ge\frac{N^\sigma}{\sqrt{N \eta}}\right) \le N^{-\sigma p},\\
  & \mathbb{P}\left(\sup_{z \in \mathcal{D}_{\nu}, i,j}|G_{ij} -\mathcal{ M}_{ij}| \ge\frac{N^{\sigma}}{\sqrt{N \eta}}\right) \le N^{-\sigma p},\\
\end{aligned}   
\end{equation}
\end{lm}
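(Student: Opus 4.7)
\medskip

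\textbf{Proof proposal.} The plan is to run a standard continuity (bootstrap) argument in the spectral parameter $\eta = \mathrm{Im}\,z$, descending from the global regime $\eta \sim 1$ down to the scale $\eta \ge N^{-1+\nu}$. The global law established in the previous subsection gives the starting point: for every $z$ with $\eta \ge 1$ one already has $\|G - \mathcal{M}\|_\infty \le N^{\sigma}/\sqrt{N\eta}$ with the desired probability. I will then propagate this estimate down in $\eta$ along a geometric grid $\eta_k = 2^{-k}$ (with $k$ up to $(1-\nu)\log_2 N$), coupled with a standard polynomial net in the real part $E$ exploiting the Lipschitz continuity of both $G(z)$ and $\mathcal{M}(z)$ in $z$ (derivatives are bounded by $\eta^{-2}$, so a grid of spacing $N^{-10}$ is more than enough, the union bound costing only a polynomial factor that is absorbed into $N^\sigma$).

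The inductive step is where all the preceding machinery gets used. Assume at scale $\eta_k$ we have the \emph{a priori} bound $\|G - \mathcal{M}\|_\infty \le N^{-\varepsilon}$ for some small fixed $\varepsilon > 0$; this is the stability input of Lemma \ref{lem:BoundGamma}, which then yields $\Gamma, \gamma = O(1)$. Feeding this into Lemma \ref{lm:scGF} and Proposition \ref{prop:sc}, the scalar self-consistent equation is satisfied by $T_1(z) = m(z)$ up to an error of size $N^{2\sigma}/\sqrt{N\eta}$. Because $\textnormal{Im}[m_\infty(z)] \ge \omega$ on $\mathcal{D}$, the stability Lemma \ref{lem:Stab} applies provided the preceding a priori bound makes the left-hand side close enough to $m_\infty$; this yields
\begin{equation*}
|m(z) - M_N(z)| \le C_\omega \cdot \frac{N^{2\sigma}}{\sqrt{N\eta}},
\end{equation*}
and combining with Lemma \ref{lem:Self-Consist3} to pass from $M_N$ to $m_\infty$ (at cost $(\log N)^{12}/N$, which is negligible) gives the scalar statement. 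For the entrywise statement on $G_{ij} - \mathcal{M}_{ij}$, I apply Lemma \ref{lem:Self-Consist2} with $\mathcal{E} = N^{2\sigma}/\sqrt{N\eta}$ and $\|E_\alpha\|_\infty = O(N^{2\sigma}/\sqrt{N\eta})$ from Lemma \ref{lm:scGF}; the logarithmic prefactors $(\log N)^{18}$ appearing there are again absorbed into a freshly chosen $N^\sigma$.

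To close the bootstrap I step from $\eta_k$ to $\eta_{k+1} = \eta_k/2$. Lipschitz continuity of $G$ and $\mathcal{M}$ in $\eta$ shows that the \emph{a priori} $N^{-\varepsilon}$ condition continues to hold at the new scale (losing at worst a factor $2$), so Lemma \ref{lem:BoundGamma} again controls $\Gamma, \gamma$ there; the improved scalar and entrywise bounds $N^\sigma/\sqrt{N\eta_{k+1}}$ are then re-derived by repeating the step above. Since each step costs a union-bound probability of at most $N^{-\sigma p}/C$ and there are only $O(\log N)$ scales combined with a polynomial net in $E$, the total probability is absorbed into a single $N^{-\sigma p}$ by increasing $p$ slightly (harmless since $p > 100/\sigma$).

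The main obstacle is ensuring that the \emph{a priori} hypothesis $\|G-\mathcal{M}\|_\infty \le N^{-\varepsilon}$ required by Lemma \ref{lem:BoundGamma} remains valid at each newly reached scale \emph{before} one can invoke the self-consistent stability machinery. This is the usual gap one closes by the continuity method: one picks the initial $\varepsilon$ so that the improved bound $N^\sigma/\sqrt{N\eta_{k+1}}$ at the new scale is still $\le \tfrac{1}{2} N^{-\varepsilon}$ on $\mathcal{D}_\nu$, which determines the admissible choice of $\sigma$ and $\nu$ and forces the geometric sequence $\eta_k$ not to fall below $N^{-1+\nu}$. A secondary nuisance is that the operator $\Xi$ lacks the strict positivity required in \cite{Che2016}, but the reduction to the scalar self-consistent equation in Section \ref{sec:self-consist} together with Lemma \ref{lm:PhiNest} already addresses this, so no further work is needed at this stage.
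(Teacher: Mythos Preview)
Your overall strategy---bootstrap in $\eta$ from the global law, reduction to a discrete net via Lipschitz continuity, invoke Lemma \ref{lem:BoundGamma} to control $\Gamma,\gamma$, then feed into Lemmas \ref{lm:scGF}, \ref{lem:Stab}, \ref{lem:Self-Consist2}, \ref{lem:Self-Consist3}, and close with a union bound---is exactly the paper's approach.

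There is, however, a quantitative slip in your dyadic descent. You step from $\eta_k$ to $\eta_{k+1}=\eta_k/2$ and claim that Lipschitz continuity preserves the a~priori bound $\|G-\mathcal M\|_\infty\le N^{-\eps}$ ``losing at worst a factor~$2$''. But the Lipschitz constant of $G$ in $z$ is $\|G\|^2\le\eta^{-2}$, so a step of size $\eta_k/2$ can change an entry by as much as $(\eta_k/2)\cdot\eta_{k+1}^{-2}=2/\eta_k$, which for small $\eta_k$ is of order $N^{1-\nu}$ and destroys any a~priori smallness. The paper avoids this by using a much finer grid of spacing $N^{-4}$ throughout $\mathcal D_\nu$: then the Lipschitz increment is at most $N^{-4}\cdot\eta^{-2}\le N^{-2}$, which is negligible, and one simply walks down the column of grid points with decreasing imaginary part. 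Replacing your geometric grid by this polynomial one (the union bound still only costs a polynomial factor) fixes the argument completely; everything else you wrote goes through.
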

\begin{proof}
As mentioned before, we show that it suffices to prove the probability bounds on a sufficiently dense gird on the set $\mathcal{D}_{\nu}$. First observe that the Lipschitz constants of the function $G_{ij}$ and the corresponding values in $\mathcal{M}$ are bounded by $\frac{1}{\eta^2}\le N^2$. We see that if we prove the high probability bounds on a grid whose grid distance is $N^{-4}$, we would be able to derive high probability bounds uniformly over $\mathcal{D}_{\nu}$. 

Now, we turn to establishing the result over a finite grid $\mathcal{G}$ in $\mathcal{D}_{\nu}$. Let $z$ be a grid point in $\mathcal{G}$ and let $z_1, z_2, \ldots, z_k$ be the grid points in $\mathcal{G}$ whose real part matches that of $z$. 
The points are ordered so that $\textnormal{Im}[z_1] \ge \textnormal{Im}[z_2] \ge \ldots \ge \textnormal{Im}[z_k]$. First assume that we have established the high probability bounds as in \eqref{eq:locallaws} for some $z_i$. We will now prove the results for $z_{i+1}$. 

Since we know the local law at $z_{i}$, we know that
\begin{equation}
     \|\mathcal{M}(z_k) - G(z_k)\|_{\infty} \le \frac{N^{\sigma}}{\sqrt{N \eta}},
\end{equation}
with high probability.

By using the Lipschitz continuity of $\mathcal{M}$ and $G$ in $z$. This will also establish that
\begin{equation}
    \| \mathcal{M}(z_{k+1}) - G(z_{k+1})\|_{\infty} \le N^{-\varepsilon},
\end{equation}
for some $\varepsilon >0$. This is good enough to apply Lemma \ref{lem:BoundGamma} as well as the self-consistent estimates in Lemmas \ref{lem:Stab} and \ref{lem:Self-Consist2}. This shows that we can get the desired high probaiblity bound at the point $z_{k+1}$. Taking a union bound over all elements will give us a high probability bound over our grid $\mathcal{G}$ and establish the local law.

\end{proof}

\section{ The Comparison to the Gaussian} \label{sec:OU}

A very powerful tool in proving universality of various random matrix models is the study of the Dyson Brownian motion. The study was initially pioneered in a series of papers by Erdos, Schlein, Yau, and collaborators \cite{Yau10, Erdos2009b, ERetal} and culminating in an optimal time proof of universality in \cite{ESY2}. The study of the Dyson Brownian motion has since been used to great effect in many papers, such as \cite{Erdos12c, Erdos2012,Erdos2010universality, Erdos2012b,LSY}. In this section, we apply the Dyson-Brownian motion to prove universality for the ensemble $\mathcal{H}$.

\subsection{Local Law estimates under Interpolation}

We consider the evolution of the Green's function under the modified Ornstein-Uhlenbeck (OU) process given as follows.
Recall {$H=[Y_{ij}]$}, our $2N \times 2N$ Hermitian block matrix with the $N \times N$ diagonal blocks set to $0$, and let $dB_{ij}$ be a matrix valued Brownian motion with correlation structure given by
\begin{equation}
    \textnormal{Cov}[B_{ab}(t) \overline{B}_{cd}(t)] = t \xi_{abcd}.
\end{equation}
Under this convention, we see that $B_{ab} =0, B_{N+a,N+b}$ for $1\le a,b \le N$.

We consider the matrix evolution on $H$ to be given by.
\begin{equation}
    dH_{ab}(t)= \frac{1}{\sqrt{N}} d B_{ab} - \frac{1}{2} H_{ab}(t),
\end{equation}
with $H(0)=H$ being our initial matrix and $H(t)$ be the result after running the Brownian motion for time $t$.

By our choice of Brownian motion, we see that $H(t)$ has the same covariance and independence structure as the matrix $H(0)$. Thus, we can show that a local law holds for $H(t)$ without much difficulty. 

The following integration by parts lemma will be useful in understanding the time evolution of functions of the matrix $H(t)$.
\begin{lm} \label{lem:IntbyParts}
Let $(x_1,\ldots,x_N)$ be an array of $J$ correlated random variables with mean 0 (where $J$ is allowed to be a function of $N$). Assume further that $\mathbb{E}[|x_k|^3] $ is bounded uniformly for all $k$. Pick some index $i \in [1,\ldots,N]$ and let $\mathcal{T}$ be the set of indices that are correlated with $i$. Then, we have the following relation,
\begin{equation}
    \mathbb{E}[f(x_1,\ldots,x_N) x_i]= \sum_{j \in \mathcal{T}} \mathbb{E}[\partial_j f] \mathbb{E}[x_j x_i]  +  \textnormal{O}(|K|^2 \| D^2 f\|_{\infty}).
\end{equation}
\end{lm}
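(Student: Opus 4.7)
\medskip

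\textbf{Proof proposal for Lemma \ref{lem:IntbyParts}.} The plan is to perform a Taylor expansion of $f$ at the ``decoupled'' point obtained by zeroing out all coordinates that are correlated with $x_i$, and then exploit that the remaining (uncorrelated) coordinates are \emph{independent} of $x_i$. In the intended application, independence of coordinates outside $\mathcal{T}$ is built into the model: after the resampling step of Section~\ref{sec:Step1}, the random variables $y_a$ and $y_b$ are independent whenever $|a-b|>(\log N)^6$, so uncorrelatedness and independence coincide.

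Let $x^{(0)}$ denote the vector obtained from $x=(x_1,\ldots,x_N)$ by setting $x_j = 0$ for $j\in\mathcal{T}$ and leaving $x_j$ unchanged for $j\notin \mathcal{T}$. By Taylor's theorem with integral remainder,
\begin{equation*}
f(x) = f(x^{(0)}) + \sum_{j\in \mathcal{T}} \partial_j f(x^{(0)})\, x_j + R(x),
\qquad |R(x)|\le \tfrac{1}{2}\|D^2 f\|_\infty \Bigl(\sum_{j\in\mathcal{T}} |x_j|\Bigr)^2.
\end{equation*}
Multiplying by $x_i$ and taking expectations gives three terms, which I would handle separately.

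For the zeroth-order term, $f(x^{(0)})$ depends only on $\{x_k : k\notin\mathcal{T}\}$, which by assumption are independent of $x_i$; since $\mathbb{E}[x_i]=0$, the term vanishes exactly. For the first-order term, the same independence factors the expectation as $\mathbb{E}[\partial_j f(x^{(0)})\, x_j x_i] = \mathbb{E}[\partial_j f(x^{(0)})]\,\mathbb{E}[x_j x_i]$ for each $j\in\mathcal{T}$. I then replace $\partial_j f(x^{(0)})$ by $\partial_j f(x)$ using the mean-value estimate $|\partial_j f(x^{(0)})-\partial_j f(x)|\le \|D^2 f\|_\infty \sum_{k\in\mathcal{T}} |x_k|$, which combined with Hölder's inequality and the uniform bound $\mathbb{E}[|x_k|^3]\le C$ produces an error of order $|\mathcal{T}|^2 \|D^2 f\|_\infty$ after summing over $j\in\mathcal{T}$. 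For the remainder term, expanding the square and applying Hölder with $\mathbb{E}[|x_j x_k x_i|]\le \sup_\ell \mathbb{E}[|x_\ell|^3]$ gives $|\mathbb{E}[R(x)x_i]| \le \frac{1}{2}\|D^2 f\|_\infty |\mathcal{T}|^2 \sup_\ell \mathbb{E}[|x_\ell|^3]$, of the same order.

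Summing the three contributions yields
\begin{equation*}
\mathbb{E}[f(x)\,x_i] = \sum_{j\in\mathcal{T}} \mathbb{E}[\partial_j f]\,\mathbb{E}[x_j x_i] + \mathrm{O}\bigl(|\mathcal{T}|^2 \|D^2 f\|_\infty\bigr),
\end{equation*}
which is the claim (with the $|K|$ in the statement read as $|\mathcal{T}|$). The only genuine subtlety is the passage from uncorrelatedness to full independence in the factorizations of the zeroth- and first-order terms; everything else is Taylor expansion and Hölder. In the general setting of the lemma this independence has to be part of the hypotheses, but in our dynamical application it is automatic from the construction of the resampled variables $y_k$ in \eqref{eq:ykdefn}, which is why the lemma is stated informally and the proof can remain short.
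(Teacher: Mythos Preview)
Your overall plan --- Taylor expand around the point where the $\mathcal T$-coordinates are zeroed out, use independence to kill the zeroth-order term, and bound the second-order remainder by H\"older --- is the same as the paper's.  But the factorization you invoke for the first-order term is not justified, and this is a genuine gap.

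You write $\mathbb E[\partial_j f(x^{(0)})\,x_j x_i]=\mathbb E[\partial_j f(x^{(0)})]\,\mathbb E[x_j x_i]$.  This requires $\partial_j f(x^{(0)})$, which is a function of $\{x_k:k\notin\mathcal T\}$, to be independent of the \emph{pair} $(x_i,x_j)$.  Independence from $x_i$ holds by the definition of $\mathcal T$, but independence from $x_j$ does not: the set $U_j$ of indices correlated with $j$ need not be contained in $\mathcal T$.  In the resampled doubling-map model this is explicit --- $\mathcal T$ is the interval $[i-(\log N)^6,i+(\log N)^6]$ while $U_j=[j-(\log N)^6,j+(\log N)^6]$, and for $j$ near the boundary of $\mathcal T$ the latter protrudes.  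So $\partial_j f(x^{(0)})$ may still depend on variables correlated with $x_j$, and you cannot split the expectation.  (The subtlety you flag at the end --- uncorrelated versus independent --- is a different issue and does not repair this.)

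The paper closes this gap with a second Taylor expansion: for each $j\in\mathcal T$ one further zeros out the coordinates in $U_j\setminus\mathcal T$, obtaining $\partial_j f(x^{(\mathcal T\cup U_j)})$, which is now genuinely independent of $(x_i,x_j)$ and allows the factorization.  One then reverses the expansion to replace $\mathbb E[\partial_j f(x^{(\mathcal T\cup U_j)})]$ by $\mathbb E[\partial_j f(x)]$.  The extra error terms from these two additional Taylor steps are of the same order $|\mathcal T|^2\|D^2 f\|_\infty$ because $|U_j|$ has the same size as $|\mathcal T|$.  Inserting this intermediate step fixes your argument.
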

\begin{proof}
The proof is an exercise in applying the Taylor expansion. 

Let $x^{(\mathcal{T})}$ be the tuple of integers $(x_1 \mathbbm{1}(1 \not \in \mathcal{T}),\ldots ,x_N \mathbbm{1}(N \not \in 
\mathcal{T}))$.
We see that if we expand $f$ in the variables in $\mathcal{T}$, we see that we can derive the expression,
\begin{equation}
    f(x_1,\ldots,x_N)x_i =f\left(x^{(\mathcal{T})}\right) x_i + \sum_{j \in \mathcal{T}} \partial_j f\left(x^{(\mathcal{T})}\right) x_j x_i + \frac{1}{2} \sum_{k,j, \in \mathcal{T}}(1-t)\int_{0}^1 \partial_{k,j}f\left(x^{(\mathcal{T})} + t \left(x- x^{(\mathcal{T})}\right)\right) x_k x_j x_i \textnormal{d} t. 
\end{equation}
We can bound,
\begin{equation}
    \frac{1}{2} \sum_{k,j \in \mathcal{T}} \int_{0}^t\mathbb{E}\left[\partial_{k,j} f\left(x^{(\mathcal{T})} + t\left(x- x^{(\mathcal{T})}\right)\right) x_k x_j x_i \right] \textnormal{d}t \le \frac{1}{2}|\mathcal{T}|^2 \| D^2 f\|_{\infty}\mathbb{E}[ |x_k|^3]^{1/3} \mathbb{E}[|x_j|^3]^{1/3} \mathbb{E}[|x_i|^3]^{1/3}.
\end{equation}

We now let $U_j$ be the set of integers that are correlated with $j$. We can again apply the Taylor expansion to compute the expectation of 
\begin{equation}
    \mathbb{E}\left[\partial_j f\left(x^{(\mathcal{T})}\right) x_j x_i\right] = \mathbb{E}\left[\partial_j f\left(x^{(\mathcal{T} \cup U_j)}\right) x_j x_i\right] + \sum_{k \in U_j \setminus \mathcal{T}} \int_{0}^1 (1-t) \mathbb{E}\left[ \partial_{j} \partial_k f\left(x^{(\mathcal{T}\cup U_j)} + t\left(x^{(\mathcal{T})} - x^{(\mathcal{T} \cup U_j)} \right)\right) x_j x_i x_k\right] \textnormal{d} t.
\end{equation}
The second term on the right hand side above can be estimated as we have done previously.

We see that
\begin{equation}
    \mathbb{E}\left[\partial_j f\left(x^{(\mathcal{T} \cup U_j)}\right) x_j x_i\right] = \mathbb{E}\left[\partial_j f\left(x^{(\mathcal{T} \cup U_j)}\right)\right] \mathbb{E}[x_j x_i].
\end{equation}
We can now reverse the application of the Taylor expansion and write,
\begin{equation}
\mathbb{E}\left[\partial_j f(x^{(\mathcal{T} \cup U_j)})\right] = \mathbb{E}\left[\partial_j f(x)\right] - \sum_{k \in U_j \cup \mathcal{T}}\int_{0}^1 (1-t) \mathbb{E}\left[\partial_k \partial_j f \left(x^{(\mathcal{T} \cup U_j)} + t \left(x - x^{(\mathcal{T} \cup U_j)} \right)\right)\right] \textnormal{d} t.
\end{equation}
Substituting this expression inside
$\mathbb{E}\left[\partial_j f\left(x^{(\mathcal{T} \cup U)}\right)\right] \mathbb{E}[x_j x_i]$ gives us the expression $\mathbb{E}[\partial_j f(x)] \mathbb{E}[x_j x_i]$ plus an error term expression which can bounded in the same way as we have done previously. This completes the proof of the expression.
\end{proof}

We will apply the previous lemma when we compute the time evolution of functions of $H(t)$.
\begin{lm} \label{lem:TimeCompare}
Let $f$ be a function in $C^3$ from $\mathbb{C}^{2N \times 2N} \to \mathbb{C}$. Then, we have the following relation,
\begin{equation}
    \mathbb{E}[f(H(t))] - \mathbb{E}[f(H(0))] = O(t N^{1/2}(\log N)^{12} \mathbb{E}[\|D^3f\|_{\infty}] ).
\end{equation}
\end{lm}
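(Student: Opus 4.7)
The strategy is to compute $\tfrac{d}{dt}\mathbb{E}[f(H(t))]$ via It\^o's formula and then cancel the leading terms using the integration-by-parts lemma, leaving only the third-order Taylor remainder as the error. Applying It\^o to the OU dynamics gives
\[
\tfrac{d}{dt}\mathbb{E}[f(H(t))] \;=\; \tfrac{1}{2N}\sum_{ab,cd} \xi_{abcd}\,\mathbb{E}[\partial_{ab}\partial_{cd}f(H(t))] \;-\; \tfrac{1}{2}\sum_{ab}\mathbb{E}[H_{ab}(t)\,\partial_{ab}f(H(t))],
\]
where the first term arises from the quadratic variation $d\langle H_{ab},\overline{H_{cd}}\rangle = \tfrac{1}{N}\xi_{abcd}\,dt$ and the second from the drift. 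The whole point of choosing the OU drift coefficient as $-\tfrac12$ together with this particular driving noise is that, when we turn the second sum into a divergence via integration by parts, it cancels the first sum to leading order.

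Concretely, I would apply Lemma \ref{lm:IntbyParts} to each term $\mathbb{E}[H_{ab}\partial_{ab}f]$, using the fact that the OU evolution preserves the logarithmic range of dependence of $H(0)$ (the driving Brownian motion has the same covariance $\xi$, and its independence structure is inherited from the resampled variables $y_k$). The correlated index set $\mathcal{T}_{ab}$ has cardinality $O((\log N)^{6})$. The leading IBP term is
\[
\sum_{(c,d)\in\mathcal{T}_{ab}}\mathbb{E}[H_{cd}H_{ab}]\,\mathbb{E}[\partial_{cd}\partial_{ab}f] \;=\; \tfrac{1}{N}\sum_{(c,d)\in\mathcal{T}_{ab}}\xi_{abdc}\,\mathbb{E}[\partial_{cd}\partial_{ab}f],
\]
using Hermiticity of $H$ to identify $\mathbb{E}[H_{cd}H_{ab}]=\mathbb{E}[H_{ab}\overline{H_{dc}}]=\xi_{abdc}/N$. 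After relabeling $(c,d)\leftrightarrow(d,c)$ and using the fact that $\xi_{abcd}$ is supported on pairs $(c,d)\in\mathcal{T}_{ab}$, this exactly matches the It\^o quadratic variation term, and the two cancel.

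What remains is the Taylor remainder in Lemma \ref{lm:IntbyParts}, applied with $x_i=H_{ab}$ and the function $\partial_{ab}f$ in place of $f$. Inspecting the proof of that lemma, the remainder is bounded by $C|\mathcal{T}_{ab}|^2\,\|D^3 f\|_\infty\,\mathbb{E}[|H_{ab}|^3]$, which using $|H_{ab}|\leq C/\sqrt{N}$ gives $C(\log N)^{12}\|D^3 f\|_\infty N^{-3/2}$ per index pair. Summing over the $O(N^2)$ pairs $(a,b)$ yields
\[
\Bigl|\tfrac{d}{dt}\mathbb{E}[f(H(t))]\Bigr| \;\leq\; C\,N^{1/2}(\log N)^{12}\,\mathbb{E}[\|D^3 f\|_\infty],
\]
and integrating from $0$ to $t$ gives the claimed bound.

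The main obstacle I anticipate is a clean bookkeeping of Wirtinger-type It\^o calculus for complex Hermitian matrices: one must verify that the tensor appearing in the quadratic variation $d\langle H_{ab},\overline{H_{cd}}\rangle$ is precisely the same tensor $\xi_{abdc}$ that emerges from the Hermiticity substitution in the IBP leading term, and that the symmetrization under $(c,d)\leftrightarrow(d,c)$ is consistent. A safe way to handle this is to split $H$ into its real and imaginary parts, drive each by an independent real Brownian motion with the appropriate real covariance, and work entirely with real-valued It\^o calculus; this sidesteps conjugation issues at the cost of slightly more indices. Once this is done, everything else is routine given the $O((\log N)^6)$ bound on $|\mathcal{T}_{ab}|$ and the deterministic bound $|H_{ab}|\leq C/\sqrt{N}$.
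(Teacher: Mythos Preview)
Your proposal is correct and follows essentially the same approach as the paper's proof: apply It\^o's formula to get the drift and quadratic-variation terms, use Lemma~\ref{lem:IntbyParts} on the drift term so that its leading part cancels the It\^o second-order term, and bound the Taylor remainder by $C(\log N)^{12}\|D^3 f\|_\infty N^{-3/2}$ per entry, then sum over the $O(N^2)$ entries and integrate in $t$. Your discussion of the Wirtinger/Hermiticity bookkeeping is, if anything, more careful than the paper's, which handles this point rather loosely.
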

\begin{proof}
We start with applying Ito's Lemma. We see that,
\begin{equation}
    \textnormal{d} \mathbb{E}[f(H(t))] = -\frac{1}{2}\sum_{ab} \mathbb{E}[\partial_{ab}f(H(t)) x_{ab}] + \frac{1}{2N} \sum_{ab,cd} { \mathbb{E}[\partial_{ab}\overline{\partial_{cd}}f(H(t))] } \xi_{abcd}.
\end{equation}

To evaluate the first term on the right-hand side of the above equation, we may apply Lemma \ref{lem:IntbyParts}. We see that we may derive,
\begin{equation}
    -\frac{1}{2} \sum_{ab} \sum_{cd} \mathbb{E}[\partial_{ab}{ \overline{\partial_{cd}}} f(H(t))] \xi_{abcd} + \textnormal{O}((\log N)^{12} \mathbb{E}[\|D^3 f\|_{\infty}] N^{-3/2}).
\end{equation}

Here, we used the fact that $\mathbb{E}[|H_{ij}(t)|^3] = O (N^{-3/2})$. 

In what follows, it will be useful to state exactly what derivatives we need to control in the expression $\|D^3 f\|_{\infty}$, rather than apply a supremum bound. 

Given a pair $(i,j)$, we define the set $\mathcal{T}^{i,j}$ as follows
$$\mathcal{T}^{i,j}=\{(i',j'):|i - i'| \le 4(\log N)^{12} \textnormal{ or } |j -j'|< 4(\log N)^{12}\}. $$

Essentially, if one lets $\tilde{\mathcal{T}}^{i,j}$ be the set of indices of entries that could be correlated with $H_{ij}(t)$, then $\mathcal{T}^{i,j}$ is the set of indices of entries that could be correlated with entries whose indices are in $\mathcal{T}^{i,j}$.

When we apply the Taylor expansion, we see that we consider expressions of the form.
\begin{equation}
    \partial_{ij} \sum_{ab,cd \in \mathcal{T}^{i,j}} \partial_{ab} { \overline{\partial_{cd}}} f\left(H(t)^{(\mathcal{T}^{i,j})} + \theta  \left(H(t) - H(t)^{(\mathcal{T}^{i,j})}\right)\right) .
\end{equation}

Here, we apply the notation from Lemma \ref{lem:IntbyParts} to let $H(t)^{\mathcal{T}^{i,j}}$ to represent the matrix $H(t)$ with certain entries set to $0$.

In the proof of the previous lemma, $\theta$ is a constant between $0$ and $1$.
\end{proof}

With the above lemma in hand, we can now establish a Green's function comparison theorem. 
\begin{lm} \label{thm:NewGFCT}
	Recall the setting of  Lemma \ref{lm:GFCL}; namely, let $n\geq 1$, $\eps>0$ and let $E_1,\ldots,E_n$ satisfy { $\rho_\infty(E_i)\geq \eps$, where  $\rho_{\infty}$ is the density associated with $m_{\infty}$}. Given $\sigma_1,\ldots,\sigma_n\leq \sigma$, we set
	$$
	z_j=E_j+\ti \eta_j,\qquad \eta_j=N^{-1-\sigma_j}.
	$$
Consider the matrix dynamics $H(t)$ with $H(0)$ coming from our initial matrix distribution as in \eqref{eq:Tildmat}.  We let $G^t$ be the Green's function of $H(t) -z$ with normalized trace $m^t$ and $G^0$ be the Green's function of $H(0) -z$ with normalized trace $m^0$. Then, there exists $C_\sigma>0$ depending only on $\sigma$ such that
$$
\l|\prod_{k=1}^n\mathrm{Im}\, m^t(z_k)-\prod_{k=1}^n\mathrm{Im}\, m^0(z_k)\r|\leq \frac{C_\sigma}{N}.
$$
\end{lm}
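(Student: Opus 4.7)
The plan is to apply Lemma~\ref{lem:TimeCompare} to the smooth observable
$$f(H) \,=\, \prod_{k=1}^n \mathrm{Im}\, m(z_k), \qquad m(z) = \tfrac{1}{2N}\mathrm{Tr}(H-z)^{-1},$$
so that $f(H(t))$ and $f(H(0))$ are exactly the two products appearing in the statement. Lemma~\ref{lem:TimeCompare} then reduces the desired comparison to an estimate on $\mathbb{E}[\|D^3 f\|_\infty]$, where $D^3$ denotes third-order derivatives in the matrix entries.

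The main technical step is to bound $\|D^3 f\|_\infty$ at the small spectral scales $\eta_k \ge N^{-1-\sigma}$. I would expand the third derivatives using the standard resolvent calculus $\partial_{H_{ab}} G_{ij} = -G_{ia} G_{bj}$ together with the Leibniz rule, producing a finite sum of monomials in the Green's-function entries and partial traces, and bound each monomial deterministically using $|G_{ij}(z_k)| \le \eta_k^{-1} \le N^{1+\sigma}$. Invoking the Ward identity from Lemma~\ref{lem:Ward} on averaged squared sums saves one power of $\eta^{-1}$ per use, yielding an estimate of the form $\|D^3 f\|_\infty \le C_n N^{A(1+\sigma)}$ for some constant $A$ depending only on $n$. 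Substituting into Lemma~\ref{lem:TimeCompare} gives
$$\bigl|\mathbb{E}[f(H(t))] - \mathbb{E}[f(H(0))]\bigr| \,\le\, C_{n,\sigma}\, t\, N^{A(1+\sigma)+1/2}(\log N)^{12},$$
which is below $C_\sigma/(2N)$ as soon as $t$ is chosen polynomially small in $N$.

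To pass from this expectation-level bound to the pointwise form stated in the lemma, I would combine the local law from Section~\ref{sec:locallaw}, which applies equally to $H(t)$ because the OU dynamics preserve the covariance structure of $H$, with a standard Lipschitz concentration argument for products of $\mathrm{Im}\, m^t(z_k)$ on the high-probability event where the local law holds. The hard part will be the polynomial blow-up of $\|D^3 f\|_\infty$ at the small scale $\eta_k \sim N^{-1-\sigma}$, which forces the comparison time $t$ to be polynomially small in $N$; this restriction is however compatible with the DBM equilibration step developed later in Section~\ref{sec:OU}, where only comparison at short times is needed in order to transfer correlation-function statistics from $H(0)$ to a slightly Gaussian-regularised matrix $H(t)$.
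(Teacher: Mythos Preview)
Your overall plan---feed the product $f(H)=\prod_k \mathrm{Im}\,m(z_k)$ into Lemma~\ref{lem:TimeCompare} and bound the third derivatives---matches the paper. The gap is in the derivative bound. You estimate $|G_{ij}(z_k)|$ by the trivial $\eta_k^{-1}\le N^{1+\sigma}$, which gives $\|D^3 f\|_\infty\le C N^{A(1+\sigma)}$ with $A\ge 3$; the Ward identity does not help here because without an a~priori bound on $\mathrm{Im}\,G_{ii}$ better than $\eta^{-1}$ it yields no saving. Plugging this into Lemma~\ref{lem:TimeCompare} forces $t\le N^{-3/2-A(1+\sigma)}$ to make the right-hand side $\le C_\sigma/N$. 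But the DBM step in Section~\ref{sec:OU} requires $t=N^{-1+\epsilon}$ so that $H(t)$ contains an order-$N^{-1+\epsilon}$ GUE component; your ``polynomially small $t$'' is therefore \emph{not} compatible with the equilibration step, contrary to your last sentence.

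The missing idea is to use the local law to upgrade the trivial bound on $\Gamma$ at scale $\eta\sim N^{-1-\sigma}$. The paper observes the deterministic monotonicity $|\partial_\eta \Gamma|\le \Gamma/\eta$, integrates it from $\eta=N^{-1+\epsilon}$ (where the local law gives $\Gamma=O(N^{\epsilon})$) down to $\eta=N^{-1-\epsilon}$, and concludes $\Gamma(E+i\eta)\le N^{O(\epsilon)}$ throughout this range. This yields $\|D^3 f\|_\infty\le N^{O(\epsilon)}$ with high probability, so the error from Lemma~\ref{lem:TimeCompare} is $t\,N^{1/2+O(\epsilon)}$, which is small for $t=N^{-1+\epsilon}$. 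The paper also points out that the same local-law argument must be run (via a net over $s$, $\theta$, and the $N^2$ choices of $\mathcal T^{i,j}$) for the interpolated matrices $H(s)^{(\mathcal T^{i,j})}+\theta(H(s)-H(s)^{(\mathcal T^{i,j})})$ that actually appear in the Taylor remainder of Lemma~\ref{lem:TimeCompare}; your proposal does not address these. Finally, the comparison in Lemma~\ref{lem:TimeCompare} is at the level of expectations, which is all that is needed for the correlation-function comparison in Theorem~\ref{thm:CorrelationFunctionCompar}; your attempt to upgrade to a pointwise statement is unnecessary and would not follow from a concentration argument since $H(t)-H(0)$ is not small pathwise.
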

\begin{proof}
We will prove the comparison when $n=1$. The proof of the general statement follows similar details.

 We will try to apply the previous Lemma \ref{lem:TimeCompare}. We see that it suffices to derive a bound on the third derivatives,
\begin{equation}
   \mathbb{E} \left| \partial_{ab}\partial_{cd}\partial_{ef} \frac{1}{2N} \text{Tr}\left( G^{\theta,s}(z)\right) \right|.
\end{equation}
Here, $cd$ and $ef$ are entries in $\mathcal{T}^{a,b}$ and 
 $G^{\theta,s}(z)$ is the Green's function of the matrix $ H(s)- z + \theta (H^{\mathcal{T}^{a,b}}(s) -H(s))$, where $\theta$ is a constant between $0$ and $1$.
 
 First fix a time $s$ and set $\theta=0$. We will first establish a bound here before discussing the general case.
 
 By direct computation, one can see that $$
 \left|\partial^{\alpha} \frac{1}{2N} \mathrm{Tr}[G^{s}(z)] \right| \le \Gamma^4(z),
 $$
 where, recall, $\Gamma(z)$ is a uniform upper bound for the entries of $G^s(z)$ and $\partial^{\alpha}$ indicates any third order partial derivative.
 
 Again, by direct computation, one can see that the change of $\Gamma$ as the imaginary part $\eta$ of $z$ changes satisfies a useful inequality,
 $$
 \left|\frac{\partial \Gamma}{\partial \eta} \right| \le \frac{\Gamma}{\eta}.
 $$
 
 One can integrate this differential equation to see that
 \begin{equation}
     \Gamma(E + \textnormal{i} \eta) \le \Gamma(E + \textnormal{i} N^{-1+ \epsilon}) N^{2\epsilon}.
 \end{equation}
 whenever $N^{-1 -\epsilon} \le \eta \le N^{-1}$ and $\eta$. Now, when $E$ is in the bulk of the distribution, we can apply our local law to ensure that $\Gamma(E + \textnormal{i} N^{-1 + \epsilon}) = O(N^{\epsilon})$. Thus, with probability $1- N^{-D}$ for some large $D$, we could ensure that
 $\left|\partial^{|\alpha|} \frac{1}{2N} \mathrm{Tr}[G^s(z)]\right| \le C N^{12 \epsilon}$ and has the trivial bound $N^{8}$ otherwise.
 
 
 By Lipschitz continuity, one can establish these results on a discrete grid of times and extend to the entire interval $[0,t]$. In addition, one can show that matrices of the form $H(t)^{\mathcal{T}^{i,j}} + \theta \left( H(t)- H(t)^{\mathcal{T}^{i,j}}\right)$ satisfy a similar local law. Again, applying local law results to a discrete grid of $\theta$s and noting the fact that there are no more than $N^2$ choices of these special $\mathcal{T}^{i,j}$ modifications will allow us to get a uniform probability bound on all choices of $\theta$ and $i,j$. This gives us a desired proof of the bounds on the derivatives we need to apply Lemma \ref{lem:TimeCompare} and complete the proof of the Theorem.

\end{proof}

The results of the above Green's function comparison theorem can be used to prove the following comparison on correlation functions, as we have seen earlier in the proof of Lemma \ref{lm:CFCL}.

\begin{thm} \label{thm:CorrelationFunctionCompar}
Fix a time $t= N^{-1 + \epsilon}$ with $\epsilon>0$. Consider the matrix dynamics { $H(t)$ with $H(0)$} coming from our initial matrix distribution as in \eqref{eq:Tildmat}. Let $p_{N}^{(k),t}$ be the correlation functions of $H(t)$. Let $\rho$ be the density corresponding the limiting spectral distribution of $H(0)$. and let $E$ be a point in the support of $\rho$. Then, for any compactly supported continuous test function $O$ from $\mathbb{R}^k \to \mathbb{R}$, we have the following comparison estimate,
\begin{equation}
    \int_{\mathbb{R}^k} O(\bm{\al}) \left[p_{N}^{(k),t}\left(E + \frac{\bm{\al}}{2N}\right) -p_{N}^{(k),0}\left(E+  \frac{{\bm{\al}}}{2N}\right) \right] \textnormal{d}{\bm{\al}} = \textnormal{O}(N^{-c})
\end{equation}
\end{thm}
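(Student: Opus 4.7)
The plan is to mirror the derivation of Lemma \ref{lm:CFCL} from the Green's function comparison Lemma \ref{lm:GFCL}, replacing Lemma \ref{lm:GFCL} by the new Green's function comparison along the OU flow, Lemma \ref{thm:NewGFCT}. The scheme is standard in random matrix theory and the arguments of \cite[Thm 15.3]{bYau} and \cite[Thm 6.4]{Erdos2012} apply essentially verbatim, so the work lies in checking that each input they require is available along the dynamics $H(t)$.

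First I would verify that $H(t)$ satisfies a local law on the optimal scale in the bulk, i.e.\ the analog of Theorem \ref{thm:auxLocalLaw} with $m^t(z)$ in place of $m(z)$. Since the OU process $\textnormal{d}H_{ab}(t)= \frac{1}{\sqrt N}\textnormal{d}B_{ab}-\tfrac12 H_{ab}(t)$ is chosen so that the covariance structure $\xi_{abcd}$ and the logarithmic range of dependence are preserved for all $t\ge 0$, every step of Sections \ref{sec:self-consist}--\ref{sec:locallaw} carries over for $H(t)$ uniformly in $t\in[0,N^{-1+\epsilon}]$. This yields the a priori rigidity/density estimate on eigenvalues near any $E$ with $\rho_\infty(E)>0$ that is needed as input for the mollification procedure.

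Second, I would realize the correlation functions as polynomials in imaginary parts of the Stieltjes transform evaluated at complex points close to the real axis. Concretely, using $\operatorname{Im}\frac{1}{x-E-\textnormal{i}\eta}$ with $\eta=N^{-1-\sigma}$ as an approximate $\delta$-function on the eigenvalue scale, one mollifies each argument of $O$ and writes
\begin{equation*}
\int_{\mathbb R^k} O(\bm\al)\,p_N^{(k),s}\!\left(E+\frac{\bm\al}{2N}\right)\textnormal{d}\bm\al
=\mathbb E\left[P\bigl(\operatorname{Im} m^s(z_1),\ldots,\operatorname{Im} m^s(z_k)\bigr)\right]+\mathcal O(N^{-c}),
\end{equation*}
for $s\in\{0,t\}$, $z_j=E+\al_j/(2N)+\textnormal{i}\eta$, and an explicit polynomial $P$ whose coefficients depend on $O$. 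The replacement error is controlled by the local law for $H(s)$ as in \cite[eq.\ (15.7)]{bYau}, and an inclusion--exclusion argument handles the diagonal terms where several mollifiers hit the same eigenvalue. Taking the difference $s=t$ versus $s=0$ and applying Lemma \ref{thm:NewGFCT} with $n\le k$ yields the bound $\mathcal O(C_\sigma/N)=\mathcal O(N^{-c})$.

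The main obstacle I expect is the careful bookkeeping of the mollification: one must choose $\sigma$ large enough that the delta approximation resolves individual eigenvalues but small enough that the local law still provides the necessary control, and verify that the off-diagonal combinatorics in the inclusion--exclusion expansion do not blow up the polynomial degree beyond what Lemma \ref{thm:NewGFCT} can handle. Since Lemma \ref{thm:NewGFCT} already treats products of $n$ imaginary parts with $\sigma_j\le\sigma$ arbitrary, the polynomial $P$ obtained from mollifying a fixed $O$ with $k$ arguments has a fixed degree and the comparison goes through. All of these steps are routine adaptations of the standard arguments, so I would present the proof simply by citing \cite[Thm 15.3]{bYau} and \cite[Thm 6.4]{Erdos2012} and noting that their hypotheses are satisfied for $H(t)$ thanks to the local law for $H(t)$ and Lemma \ref{thm:NewGFCT}.
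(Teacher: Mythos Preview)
Your proposal is correct and follows exactly the paper's approach: the paper simply states that Theorem \ref{thm:CorrelationFunctionCompar} follows from the Green's function comparison Lemma \ref{thm:NewGFCT} ``as we have seen earlier in the proof of Lemma \ref{lm:CFCL},'' i.e., by the standard mollification and inclusion--exclusion argument of \cite[Thm 15.3]{bYau} and \cite[Thm 6.4]{Erdos2012}, with the local law for $H(t)$ supplying the a priori density bound. Your write-up is in fact more detailed than what the paper provides.
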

\subsection{Comparing to the GOE}

At this point, we have established that the statistics of $H(0)$ match those of $H(t)$. We will be finished once we show that the statistics of $H(t)$ match those of the GOE. 

However, recall from our interpolation that $H(t)$ has a correlated Gaussian component $\sqrt{t}{N} \begin{bmatrix} &0 &C\\
&C^* &0\end{bmatrix}$, where $G$ has the correlation structure given by $\mathbb{E}[C_{ab} C_{cd}] = \xi_{abcd}$. However, Because our covariance matrix $\xi< c_0$ is positive semidefinite, we can split the matrix $C$ as $C = \tilde{C} + G$ where $\tilde{C}$ and $G$ are independent Gaussians and $G$ is a { GUE }matrix.

Thus, the matrix $H(t)$ can be represented in the form $\tilde{H} + c 
{ GUE}$ for $c \ge N^{-1 + \epsilon}$ and $\tilde{H}$ independent of the { GUE}. Theorem 2.2 of \cite{LSY} proves that the matrix $H(t)$ will have universal spectral statistics. Theorem \ref{thm:CorrelationFunctionCompar} shows that $H(0)$ will have the same spectral statistics as $H(t)$. This proves Theorem \ref{thm:auxmain}. Finally using Lemma \ref{lm:CFCL}, this will further prove Theorem \ref{thm:main}.

\begin{appendix}
\section{On the limiting objects $m_\infty(z)$ and $\rho_\infty(z)$}\label{app:limiting}
In this appendix, we prove Propositions \ref{prop:minfty} and \ref{prop:rhoinfty}.

\be{proof}[Proof of Proposition \ref{prop:minfty}]
This was essentially already proved in the main text in the pre-limit case. Existence of $m_\infty$ follows from Brouwer's fixed point theorem as in the proof of Lemma \ref{lem:exist}. Uniqueness is proved by following the argument establishing Lemma \ref{lem:uniq}. We omit the details.
\e{proof}

\be{proof}[Proof of Proposition \ref{prop:rhoinfty}]
In a first step, we prove that $m_\infty(z)$ is the Stieltjes transform of a Borel measure on $\R$. The analytic implicit function theorem and the condition on the imaginary part imply that $m_\infty(z)$ is a Herglotz function. Hence, the Herglotz representation theorem yields constants $a\in\R$, $b\geq0$, and a Borel measure $\d\rho_\infty$ on $\R$ satisfying $\int_\R \tfrac{1}{1+x^2}\d\rho_\infty(x)<\infty$ such that
$$
m_\infty(z)=a+bz+\int_\R \l(\frac{1}{x-z}-\frac{1}{1+x^2}\r)\d\rho_\infty(x)
$$
Let $G$ be an $N\times N$ Gaussian matrix with the same correlation structure as the matrix $Y$ from \eqref{eq:Ydefn} and define its Hermitization
$$
H_G=\be{pmatrix}
 0 & G\\
 G^\dagger & 0
\e{pmatrix}
$$
We can repeat the proof of the local law that was given for $\tilde H$ for the matrix $H_G$ because only the correlation structure and range of dependence (which equals the range of correlation for a Gaussian matrix) is used. The upshot is that $m_\infty(z)$ arises as the limiting spectral density of the Gaussian matrix ensemble $H_G$. Since $H_G$ are Hermitian matrices, this implies
$$
|m_\infty(z)|\leq \frac{1}{\mathrm{Im}\, z},\qquad \mathrm{Im}\,z>0.
$$
By considering the asymptotics of this estimate for $z=iy$ with $y\to\infty$, the Herglotz representation formula reduces to
\beq\label{eq:minftyzintermediate} 
m_\infty(z)=\int_\R \frac{1}{x-z}\d\rho_\infty(x),
\eeq
for a finite Borel measure $\d\rho_\infty(x)$.

In a second step, we use free probability theory to prove that the Borel measure $\d\rho_\infty(x)$ in \eqref{eq:minftyzintermediate}  has a continuous density. 

Recall that $f$ is an admissible evaluation function. By Lemma \ref{lm:PhiNest}, this implies that the correlation matrix $\Phi^N\geq \frac{g_{\min}}{2}>0$ is strictly positive definite. Hence, we can decompose 
$$
G=\frac{g_{\min}}{2}G_1+G_2
$$
where $G_1$ is a Ginibre matrix (independent Gaussian entries with variance $\frac{1}{N}$) and $G_2 $ is a Gaussian matrix independent of $G_1$ with entries of variance $\sim\frac{1}{N}$ and correlation matrix $\geq\frac{g_{\min}}{2}$. This decomposition extends to the Hermitization
$$
H_G=\frac{g_{\min}}{2}H_1+H_2,\qquad H_i=\be{pmatrix}
 0 & G_i\\
 G_i^\dagger & 0
\e{pmatrix}.
$$
Both matrices $H_1$ and $H_2$ satisfy a local law for respective limiting densities $\d\rho_1(x)\equiv \d\rho_{\mathrm{sc}}(x)$ and $\d\rho_2(x)$. Thus $\d\rho_\infty(x)$ arises as the free convolution of $\d\rho_2$ with the Wigner semicircle law. A result of Biane \cite[Cor.\ 2]{Biane} then says that $\d\rho_\infty(x)$ has a continuous density. It is computable from \eqref{eq:minftyzintermediate}  via the Stieltjes inversion formula.
\e{proof}

\section{Heuristic derivation of the self-consistent equation}\label{app:heuristic}

Recall, by definition, we have the equation $G Y - z G = I$. To derive the form of the self consistent equation, it would be required to consider a matrix $\tilde{Y}$ of Gaussian random variables whose covariance structure matches the covariance structure of $Y$; thus, we consider the equation $G \tilde{Y} - z G = I$, take the expectation of both sides of the equation, and simplify by integrating by parts with respect to the Gaussian variables in $\tilde{Y}$. This  procedure will result in the equation
\begin{equation}
    \mathbb{E}[(G \tilde{Y} -z G)_{ab}] = \mathbb{E}[-\frac{1}{N} \sum_{k,l,m}G_{ik} \xi_{kl jm}G_{lm} - z G_{ij}] =\delta_{ij}.
\end{equation}
Here, we wrote $(G\tilde{Y})_{ij} = \sum G_{im} \tilde{Y}_{mj}$. Observe now that $\partial_{\tilde{Y}_{kl}} G_{im} = G_{ik} G_{lm}$. Integrating by parts, using the fact that $\mathbb{E}[\tilde{Y}_{kl}\tilde{Y}_{mj}] = \xi_{kl jm}$ will show that there is a prefactor of $\xi_{kl jm} G_{ik} G_{lm}$ associated with this quantity. At the last step we can remove the expectation, anticipating that these quantities will be concentrated.

\end{appendix}

\section*{Acknowledgments}
A.A. would like to thank NSF award number 2102842  and the Harvard GSAS Merit/Graduate Society Term-Time Research Fellowship for support during part of this project
\bibliographystyle{plain}

\bibliography{mixing}


\end{document}